\theoremstyle{plain}
\newtheorem{theorem}{Theorem}[section]
\newtheorem{proposition}[theorem]{Proposition}
\newtheorem{lemma}{Lemma}[section]
\theoremstyle{definition}
\newtheorem{remark}{\textup{Remark}}
\numberwithin{equation}{section}
\begin{document}

\title[A classification of Markoff-Fibonacci $m$-triples]{A classification of Markoff-Fibonacci $m$-triples}

\author[D. Alfaya, L. A. Calvo, A. Mart\'inez de Guinea, J. Rodrigo \and A. Srinivasan]{D. Alfaya*  ***, L. A. Calvo**, A. Mart\'inez de Guinea***, J. Rodrigo* \and A. Srinivasan**}

\newcommand{\acr}{\newline\indent}

\address{\llap{*\,}Department of Applied Mathematics,\acr
ICAI School of Engineering, Comillas Pontifical University,\acr
C/Alberto Aguilera 25, 28015 Madrid,\acr Spain.}
\email{dalfaya@comillas.edu, jrodrigo@comillas.edu}

\address{\llap{**\,}Department of Quantitative Methods,\acr
ICADE, Comillas Pontifical University,\acr
C/Alberto Aguilera 23, 28015 Madrid,\acr Spain.}
\email{lacalvo@comillas.edu, asrinivasan@icade.comillas.edu}

\address{\llap{***\,}Institute for Research in Technology,\acr
ICAI School of Engineering,\acr
Comillas Pontifical University,\acr
C/Santa Cruz de Marcenado 26, 28015 Madrid,\acr Spain.}
\email{dalfaya@comillas.edu, alex.m.guinea@alu.comillas.edu}

\thanks{\textit{Acknowledgements}. This research was supported by project CIAMOD (Applications of computational methods and artificial intelligence to the study of moduli spaces, project PP2023\_9) funded by Convocatoria de Financiaci\'on de Proyectos de Investigaci\'on Propios 2023, Universidad Pontificia Comillas, and by grants PID2022-142024NB-I00 and PID2019-104735RB-C42 funded by MCIN/AEI/10.13039/501100011033. We would like to thank Sergio Herreros, Daniel S\'anchez and Jaime Pizarroso for useful discussions.}

\keywords{Markoff triples, generalized Markoff equation, Fibonacci solutions.}

\begin{abstract}
We classify all solution triples with Fibonacci components to the equation $a^2+b^2+c^2=3abc+m,$ for positive $m$. We show that for $m=2$ they are precisely $(1,F(b),F(b+2))$, with even $b$; for $m=21$, there exist exactly two Fibonacci solutions $(1,2,8)$ and $(2,2,13)$ and for any other $m$ there exists at most one Fibonacci solution, which, in case it exists, is always minimal (i.e. it is a root of a Markoff tree).
Moreover, we show that there is an infinite number of values of $m$ admitting exactly one such solution.
\end{abstract}

\maketitle

\setcounter{section}{0}
\section{Introduction}

Markoff triples constitute a fascinating area of research in number theory. They are the solution triples in positive integers of the Markoff equation
$$
x^2 + y^2 + z^2 = 3xyz.
$$  Originally introduced by A. A. Markoff in his work \cite{M1,M2}, in the context of minima of quadratic forms, these triples exhibit interesting properties; for instance, they are all distributed through a tree structure, called the Markoff tree. 

In recent years, many authors have studied generalizations of this equation (\cite{Mor}, \cite{GS}). In \cite{SC}, Markoff $m$-triples are introduced as positive integer solutions of the $m$-Markoff equation
\begin{equation}\label{Markoffeq}
x^2+y^2+z^2=3xyz+m,
\end{equation}
where $m$ is a positive integer. By the symmetry of the equation, any permutation of a solution is also a solution, and hence the Markoff $m$-triples $(x,y,z)$ are assumed to be ordered with $0<x\le y \le z$. These triples are also organized into tree structures. However, in this case, multiple trees or none could exist. The authors in \cite{SC} showed that the number of trees, for every $m,$ is equal to the number of Markoff $m$-triples $(x,y,z)$ that are minimal, that is to say, those that satisfy the inequality
\begin{equation}\label{Minimals}
z \geq 3xy.
\end{equation}
Moreover, these minimal triples are the roots of the trees generated by Vieta transformations.

Several authors have studied various generalized Markoff equations, looking for solutions of these equations from sequences such as Fibonacci, Pell, Lucas, and so on (see \cite{AL},\cite{B},\cite{GGL},\cite{H},\cite{HT},\cite{HST}, \cite{RSP}, \cite{R} and \cite{T}). In this paper, we study Markoff-Fibonacci $m$-triples for $m>0$, which are Markoff $m$-triples such that all the components are Fibonacci numbers. The case of Markoff-Fibonacci triples $(m=0),$ was previously studied in \cite{LS}, where it was found that the only Fibonacci solutions to the Markoff equation were triples of the form $(1,F(b),F(b+2)),$ where $b$ is odd.

We found that an analogous family of Fibonacci solutions exists for the generalized $m$-Markoff equation with $m=2$, containing all possible non-minimal solutions. Concretely, if $b$ is an even number, then $(1,F(b),F(b+2))$ is always a solution of the $m$-Markoff equation for $m=2$. This family forms the superior branch of the  Markoff $2$-tree, as shown in Figure \ref{2-markovtree} below.

\begin{figure}[H]
\centering
\begin{tikzpicture}[grow'=right,level distance=1.25in, sibling distance=0.15in,
    calculate width/.code={
        \pgfmathsetmacro{\tempwidth}{width("#1")*3}
        \tikzset{every tree node/.append style={text width=\tempwidth pt}}}]
\Tree 
[. (1,1,3)
    [. \textbf{(1,3,8)}
        [.\textbf{(1,8,21)}
            [.\textbf{(1,21,55) }
                    [.\textbf{(1,55,144)} ]
                    [.(21,55,3464) ]
            ]
            [.(8,21,503)
                    [.(8,503,12051) ]
                    [. (21,503,31681) ]
            ]
        ]
        [.(3,8,71)
            [.(3,71,631) 
                    [.(3,631,5608)  ]
                    [.(71,631,134400) ]
            ]
            [.(8,71,1701) 
                    [.(8,1701,40753)  ]
                    [.(71,1701,362305) ]
            ]
        ] 
    ]
]
\end{tikzpicture}
\caption{Beginning of the Markoff $2$-tree with minimal $2$-triple $(1,1,3)$. The sequence of non-minimal Markoff-Fibonacci $2$-triples is represented in bold.}
\label{2-markovtree}
\end{figure}

 Every other $m\ne 2$, except for $m=21$, has at most one possible Fibonacci solution to the $m$-Markoff equation, which is always a minimal triple (i.e., a root of a Markoff $m$-tree). This is summarised in the following theorem yielding a classification of all Markoff-Fibonacci $m$-triples (see Proposition \ref{prop:non-minimal} and Proposition \ref{prop:minimal}).

\begin{theorem}
\label{thm:intro}
For each $m>0,$ there exists at most one ordered solution to the equation
$$x^2+y^2+z^2=3xyz+m$$
composed of Fibonacci numbers, except in the following cases.
\begin{itemize}
    \item If $m=2$, the Fibonacci solutions are $(1,F(b),F(b+2))$ for each even $b\ge 2$.
    \item If $m=21$, the Fibonacci solutions are the minimal triples $(1,2,8)$ and $(2,2,13)$.
\end{itemize}
Moreover, there exists an infinite number of $m>0$ admitting exactly one Markoff-Fibonacci $m$-triple and such triple is always minimal.
\end{theorem}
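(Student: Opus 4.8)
The plan is to deduce Theorem~\ref{thm:intro} from two structural results, proved separately, plus one explicit construction. The first, Proposition~\ref{prop:non-minimal}, should say that every \emph{non-minimal} Markoff-Fibonacci $m$-triple has the form $(1,F(b),F(b+2))$ with $b\ge 4$ even, and that all of these satisfy $m=2$. The second, Proposition~\ref{prop:minimal}, should say that for every $m\neq 21$ there is at most one \emph{minimal} Markoff-Fibonacci $m$-triple, while $m=21$ has exactly the two minimal triples $(1,2,8)$ and $(2,2,13)$. Granting both, the classification follows by inspection: $(1,1,3)=(1,F(2),F(4))$ is a minimal $2$-triple, hence the unique minimal one by Proposition~\ref{prop:minimal}, so the complete list for $m=2$ is $(1,F(b),F(b+2))$ with $b$ even and $b\ge 2$, each of these being a solution since $1+F(b)^2+F(b+2)^2-3F(b)F(b+2)=1+(-1)^{b}$ (expand with $F(b+2)=F(b+1)+F(b)$ and use Cassini's identity $F(b+1)^2-F(b)F(b+2)=(-1)^{b}$). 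For $m=21$ there are no non-minimal Fibonacci triples (since $21\neq 2$), so $(1,2,8)$ and $(2,2,13)$ exhaust the solutions; for every other $m>0$ there are no non-minimal and at most one minimal Fibonacci triple, hence at most one solution, and it is minimal whenever it exists.

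For the infinitude assertion I would use the family $T_n=(1,1,F(n))$ with $n\ge 5$. Each $T_n$ is an ordered triple, it is minimal because $F(n)\ge 5>3=3\cdot 1\cdot 1$, and it is a Markoff $m_n$-triple with $m_n=1+1+F(n)^2-3F(n)=(F(n)-1)(F(n)-2)$. Since $(F(n))_{n\ge 2}$ is strictly increasing and $x\mapsto(x-1)(x-2)$ is increasing on $[2,\infty)$, the $m_n$ are pairwise distinct; moreover $m_n\neq 2$ (this would require $F(n)=3$) and $m_n\neq 21$ (no integer solves $(x-1)(x-2)=21$, since its discriminant $85$ is not a perfect square). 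By the previous paragraph each $m_n$ with $n\ge 5$ therefore admits exactly one Markoff-Fibonacci $m_n$-triple, namely the minimal triple $T_n$, so infinitely many values of $m$ qualify.

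It remains to indicate how the two propositions would be proved. For Proposition~\ref{prop:non-minimal}, write a candidate triple as $(F(i),F(j),F(k))$ with the three values weakly increasing. Dividing $F(i)^2+F(j)^2+F(k)^2-3F(i)F(j)F(k)=m>0$ by $F(k)^2$ and using $F(i),F(j)\le F(k)$ gives $F(i)F(j)<F(k)$, while non-minimality gives $F(k)<3F(i)F(j)$. When $i,j\ge 3$, the product identity $F(i+j)=F(i)F(j+1)+F(i-1)F(j)$ and its index-shifted variants show that the only Fibonacci numbers strictly between $F(i)F(j)$ and $3F(i)F(j)$ are $F(i+j-1)$ and $F(i+j)$, so $k\in\{i+j-1,i+j\}$; substituting either one and bounding $3F(i)F(j)F(k)-F(k)^2$ from below by $F(i)F(j)^2\ge F(i)^2+F(j)^2$ forces $m<0$, a contradiction. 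Hence $\min(i,j)\le 2$, i.e.\ the smallest component equals $1$; then $F(j)\le F(k)<3F(j)$ confines $k$ to $\{j,j+1,j+2\}$, and a term-by-term computation (Cassini once more) leaves only $k=j+2$ with $j$ even, producing $m=2$ and the asserted family. The main technical nuisance is making the implication ``$k\in\{i+j-1,i+j\}\Rightarrow m<0$'' uniform rather than a finite check; I expect to settle it with the monotone estimate above after treating $i,j\in\{3,4\}$ by hand.

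For Proposition~\ref{prop:minimal}, minimality lets one write $F(k)=3F(i)F(j)+t$ with $t\ge 0$, and substitution then yields the clean identity $m=F(i)^2+F(j)^2+t^2+3F(i)F(j)\,t$. Since the right-hand side grows quadratically in $t$ and at least quadratically in $\max(F(i),F(j))$, fixing $m$ bounds $t$, $F(i)$ and $F(j)$, so only finitely many configurations compete for each $m$, and two distinct minimal $m$-triples would have to realize two of them. Organizing the search by the cases $F(i)=F(j)=1$ (where $m=(F(k)-1)(F(k)-2)$ already pins down $F(k)$), $F(i)=1\le F(j)$, and $F(i),F(j)\ge 2$, always under the constraint that $3F(i)F(j)+t$ be a Fibonacci number, one is left with the single coincidence $(1,2,8)$ versus $(2,2,13)$ at $m=21$. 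Proving that $m=21$ is the \emph{only} such accident is, I expect, the real heart of the matter; the clean route is to bound $F(i)$ and $F(j)$ explicitly in terms of $m$ and then exploit the near-geometric growth of the Fibonacci sequence --- equivalently, the gaps between consecutive Fibonacci numbers --- to eliminate the remaining finitely many possibilities.
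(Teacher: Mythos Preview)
Your high-level reduction to Propositions~\ref{prop:non-minimal} and~\ref{prop:minimal} matches the paper exactly, and your infinitude argument via $T_n=(1,1,F(n))$ is a clean concrete alternative to the paper's Proposition~\ref{prop:infinite}. Your sketch of Proposition~\ref{prop:non-minimal} is also essentially the paper's argument: the paper packages the steps as Lemma~\ref{Javier_non-minimal} (the case $i,j\ge 3$, $k\le i+j$), Lemma~\ref{discard_luis} (the case $k=j+1$), and the Cassini computation for $(2,b,b+2)$, so nothing new is needed there.

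The genuine gap is in Proposition~\ref{prop:minimal}. Your identity $m=F(i)^2+F(j)^2+t^2+3F(i)F(j)t$ is correct and does show that for each \emph{fixed} $m$ only finitely many $(F(i),F(j),t)$ are possible, but that is not what is required: you must rule out coincidences $m(a,b,c)=m(a',b',c')$ uniformly over all $m$, and as $m\to\infty$ the bounds on $F(i),F(j),t$ blow up, so ``eliminate the remaining finitely many possibilities'' is not a terminating procedure. The paper's proof is of a completely different nature and substantially harder than your sketch suggests. It first uses analytic estimates from Binet's formula and Karamata's inequality (Lemma~\ref{lemma:KaramataBound}) to prove that two minimal Fibonacci $m$-triples with $c\ge c'$ must have $c'\in\{c,c-1\}$ (Lemmas~\ref{lemma:c'c-1}--\ref{lemma:c'c-1part2}); then, through a lengthy case analysis (Lemmas~\ref{lemma:sumab}--\ref{lemma:c=c'}), it excludes $c'=c-1$ for $c\ge 20$; next it shows that $c=c'$ forces $a+b=a'+b'$ (Lemma~\ref{lemma:a+b=a'+b'}) and derives a contradiction via Vajda's identity; finally, the range $c<20$ is handled by a computer check (Lemma~\ref{lemma:computationalCheck}). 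None of this is visible from the parametrization by $t$, and your proposal gives no indication of how to carry out the uniform elimination, so as it stands Proposition~\ref{prop:minimal} is asserted rather than proved.
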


The paper is structured as follows. Section \ref{section:preliminary} provides certain inequalities satisfied by Fibonacci numbers which will be useful for some of the proofs. In Section \ref{section:non-minimal} a characterisation of Markoff $m$-triples $(F(a),F(b),F(c))$  for a positive $m$ is obtained. Section \ref{section:minimal} is devoted to the full classification of all Markoff-Fibonacci triples. We first characterise non-minimal $m$-triples and then focus on the proof of the uniqueness of minimal solutions of the $m$-Markoff equation. This proof is split into three parts. First, using analytic techniques, we establish that if $(F(a),F(b),F(c))$ and $(F(a'), F(b'), F(c'))$ are two ordered minimal Markoff-Fibonacci $m$-triples with $c\ge c'$, then $c'=c-1$ or $c=c'$. Then, we show that for a given $m$, in either of the cases above, if $c$ is large enough, then two such different triples cannot exist. Finally, we verify computationally that the result holds for the remaining finite values of $c$ (see Appendix \ref{section:appendix1}).

%
%

\section{Some inequalities of Fibonacci numbers}
\label{section:preliminary}

Let us start by recalling some facts about Fibonacci numbers which will be useful in the study of Fibonacci solutions of the $m$-Markoff equation.

The well-known recursion formula for the Fibonacci sequence is
given by $F(0)=0$, $F(1)=1$ and $F(n+1)=F(n)+F(n-1)$, for all 
$n\ge 1$. Throughout this paper, we denote
$$\varphi=\frac{1+\sqrt{5}}{2}, \quad \quad \bar{\varphi}=\frac{1-\sqrt{5}}{2}$$
so that for each $n\ge 0$, the $n$-th Fibonacci number can be written through Binet's formula as
\begin{equation}\label{eq:Binet}
F(n)=\frac{\varphi^n-\bar{\varphi}^n}{\sqrt{5}}\, .\end{equation}

The following bounds on quotients of Fibonacci numbers will be used in later sections to compute bounds on Markoff-Fibonacci $m$-triples.

\begin{lemma}
\label{lemma:boundFibonacciQuotient}
Let $a$ be an integer and let $N>0$ be an integer. Let
$$k_{N,a}= \min \left(\frac{F(N)}{F(N+a)},\frac{F(N+1)}{F(N+1+a)}\right),\hspace{0.5cm} K_{N,a}=\max \left(\frac{F(N)}{F(N+a)},\frac{F(N+1)}{F(N+1+a)}\right).$$
Then, for each $n\ge N$, the following inequalities hold.
$$k_{N,a}\le \frac{F(n)}{F(n+a)}\le K_{N,a}.$$
\end{lemma}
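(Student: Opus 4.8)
The plan is to exploit the three-term recursion directly rather than Binet's formula. Writing $a_n = F(n)/F(n+a)$ for the quotient in question, applying the Fibonacci recursion to numerator and denominator simultaneously gives
$$a_{n+2} = \frac{F(n)+F(n+1)}{F(n+a)+F(n+1+a)},$$
so that $a_{n+2}$ is precisely the \emph{mediant} of the fractions $a_n = F(n)/F(n+a)$ and $a_{n+1} = F(n+1)/F(n+1+a)$. I would then invoke the elementary fact that the mediant of two fractions with positive denominators always lies between them (strictly, unless the two fractions coincide): if $p/q\le r/s$ with $q,s>0$ then $p/q\le (p+r)/(q+s)\le r/s$, since $(p+r)/(q+s)-p/q = (qr-ps)/(q(q+s))\ge 0$ and symmetrically for the other endpoint. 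Hence $a_{n+2}$ belongs to the closed interval with endpoints $a_n$ and $a_{n+1}$.

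From here the lemma follows by a short induction on $n$. Set $I_n = \big[\min(a_n,a_{n+1}),\,\max(a_n,a_{n+1})\big]$, and observe that $I_N = [k_{N,a},K_{N,a}]$ by the very definition of $k_{N,a}$ and $K_{N,a}$. Since $a_{n+1}\in I_n$ trivially and $a_{n+2}\in I_n$ by the mediant observation above, both endpoints of $I_{n+1}$ lie in $I_n$, whence $I_{n+1}\subseteq I_n$. Iterating yields $I_n\subseteq I_N$ for every $n\ge N$, and in particular $a_n = F(n)/F(n+a)\in I_N = [k_{N,a},K_{N,a}]$, which is exactly the asserted double inequality.

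The only delicate point is the positivity of the denominators $F(n+a)$ and $F(n+1+a)$ for $n\ge N$, which is what makes the mediant lie between the two fractions; this holds as soon as $N+a\ge 1$, the situation in all later applications, so I would either add that hypothesis explicitly or reduce to it by replacing negative-index Fibonacci numbers with positive-index ones up to sign. Beyond recognising the mediant structure I do not expect any real obstacle: once that is in place the argument is a two-line induction, requiring neither Binet's formula nor the value of the limit $\lim_{n\to\infty} a_n = \varphi^{-a}$. (One could alternatively show that the even- and odd-indexed subsequences of $(a_n)$ are monotone and bracket their common limit, but that route is longer and is subsumed by the mediant argument.)
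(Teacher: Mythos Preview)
Your proof is correct and is essentially the same induction as the paper's: the paper assumes $kF(n+a-2)\le F(n-2)\le KF(n+a-2)$ and $kF(n+a-1)\le F(n-1)\le KF(n+a-1)$ and adds the two to obtain $kF(n+a)\le F(n)\le KF(n+a)$, which is exactly the mediant inequality you invoke. Your packaging via nested intervals $I_{n+1}\subseteq I_n$ is a tidy reformulation of the same step and incidentally records a little more (that the bounds only tighten as $N$ increases).
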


\begin{proof}
Let $k,K\in \mathbb{R}$ be any pair of numbers such that for some $N$
$$k\le \frac{F(N)}{F(N+a)}, \frac{F(N+1)}{F(N+a+1)} \le K.$$
We will show that
$$kF(n+a)\le F(n) \le KF(n+a)$$
for each $n\ge N$. We achieve this by induction on $n$. The result clearly holds for $n=N$ and $n=N+1$ by hypothesis. Let $n\ge N+2$ and assume that the statement is true for all $n'$ with $N\le n'< n$. In particular, it follows that
$$kF(n+a-2)\le F(n-2) \le KF(n+a-2)\,,$$
$$kF(n+a-1)\le F(n-1) \le KF(n+a-1)\,.$$
Adding both expressions yields
\begin{multline*}kF(n+a)=kF(n+a-1)+kF(n+a-2)\le F(n)=F(n-1)+F(n-2)\\ \le KF(n+a-1)+KF(n+a-2)=KF(n+a)\, .\end{multline*}
The lemma now follows on taking $k=k_{N,a}$ and $K=K_{N,a}$.
\end{proof}

\begin{remark}
\label{rmk:FibonacciQuotientBounds}
In Appendix \ref{section:appendix2}, Tables \ref{table:1} and \ref{table:2} have been computed by rounding down and up respectively the values of $k_{N,a}$ and $K_{N,a}$ provided by Lemma \ref{lemma:boundFibonacciQuotient} for small values of $N$ and $a$. We will use these concrete lower and upper bounds for the quotients $F(n)/F(n+a)$ in the proofs of some inequalities in the following sections.
\end{remark}

\begin{remark}
\label{rmk:Vadja}
For the sake of  exposition, we recall the following identity on Fibonacci numbers, usually known as Vajda's identity \cite{V} which will be used in several proofs throughout the paper:
$$F(n+i)F(n+j)-F(n)F(n+i+j)=(-1)^{n}F(i)F(j),$$
for any positive integers $i,j$ and $n.$ In particular, taking $n=1,\,\, i=a$ and $j=b-1$ we obtain the famous equality $F(a+b)=F(a+1)F(b)+F(a)F(b-1)$.

\end{remark}

\begin{lemma}
\label{lemma:F(n+m)<=3F(n)F(m)}
Let $a,b,c\ge 2$. Then
\begin{align}
F(c)\le  3F(a)F(b)& \quad \text{if and only if} \quad c\le a+b, \label{eq:F(n+m)<=3F(n)F(m)1}\\
F(c)> 3F(a)F(b)& \quad \text{if and only if} \quad c\ge a+b+1, \text{and} \label{eq:F(n+m)<=3F(n)F(m)2}\\
F(c)= 3F(a)F(b)& \quad \text{if and only if} \quad a=b=2,\, c=4\label{eq:F(n+m)<=3F(n)F(m)3}\, .
\end{align}
\end{lemma}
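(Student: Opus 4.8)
The plan is to reduce all three claims to two core estimates valid for all $a,b\ge 2$: first, $F(a+b)\le 3F(a)F(b)$, with equality precisely when $a=b=2$; and second, $F(a+b+1)>3F(a)F(b)$. Granting these, statement \eqref{eq:F(n+m)<=3F(n)F(m)1} follows from the monotonicity of $F$ on $\{2,3,4,\dots\}$: if $2\le c\le a+b$ then $F(c)\le F(a+b)\le 3F(a)F(b)$, while if $c\ge a+b+1$ then $F(c)\ge F(a+b+1)>3F(a)F(b)$, so $F(c)\le 3F(a)F(b)$ forces $c\le a+b$. Statement \eqref{eq:F(n+m)<=3F(n)F(m)2} is then just the negation of \eqref{eq:F(n+m)<=3F(n)F(m)1} over the integers. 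For \eqref{eq:F(n+m)<=3F(n)F(m)3} one argues that $F(c)=3F(a)F(b)$ forces $c=a+b$: indeed $c\le a+b$ by \eqref{eq:F(n+m)<=3F(n)F(m)1}, and if $c\le a+b-1$ then, since $a+b\ge 4$, strict monotonicity gives $F(c)\le F(a+b-1)<F(a+b)\le 3F(a)F(b)$, a contradiction; the equality case of the first estimate then yields $a=b=2$, $c=4$, which indeed satisfies $F(4)=3=3F(2)F(2)$.

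For the first estimate I would start from the identity $F(a+b)=F(a+1)F(b)+F(a)F(b-1)$ recorded in Remark \ref{rmk:Vadja}. Substituting $F(a+1)=F(a)+F(a-1)$, $F(a-1)=F(a)-F(a-2)$ and $F(b-1)=F(b)-F(b-2)$ (all valid for $a,b\ge 2$), a short rearrangement gives the clean identity
$$3F(a)F(b)-F(a+b)=F(a)F(b-2)+F(a-2)F(b).$$
Since $a,b\ge 2$, the right-hand side is a sum of products of nonnegative Fibonacci numbers, hence $\ge 0$; moreover, as $F(a),F(b)\ge 1$, it vanishes exactly when $F(a-2)=F(b-2)=0$, i.e.\ exactly when $a=b=2$.

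For the second estimate I would apply the same identity in the shifted form $F(a+b+1)=F(a+1)F(b+1)+F(a)F(b)$, so that $F(a+b+1)-3F(a)F(b)=F(a+1)F(b+1)-2F(a)F(b)$, and it remains to show $F(a+1)F(b+1)>2F(a)F(b)$ for $a,b\ge 2$. I would first establish the auxiliary bound $2F(a+1)\ge 3F(a)$ for every $a\ge 2$ (checking $a=2$ by hand and using $2F(a+1)-3F(a)=F(a-3)\ge 0$ for $a\ge 3$); this gives $4F(a+1)F(b+1)=\bigl(2F(a+1)\bigr)\bigl(2F(b+1)\bigr)\ge 9F(a)F(b)>8F(a)F(b)$, and dividing by $4$ yields the desired strict inequality.

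I do not expect a genuine obstacle here; the lemma is elementary once the right identities are chosen. The only points requiring care are making sure the equality analysis in \eqref{eq:F(n+m)<=3F(n)F(m)3} separately rules out $c<a+b$ before invoking the equality case of the first estimate, and keeping track of where the \emph{strict} monotonicity of $F$ (valid from index $2$ onward, since $a,b\ge 2$ forces $a+b\ge 4$) is actually needed.
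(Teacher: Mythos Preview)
Your proof is correct and follows the same overall strategy as the paper: reduce to the two estimates $F(a+b)\le 3F(a)F(b)$ (with equality iff $a=b=2$) and $F(a+b+1)>3F(a)F(b)$, each obtained from the addition formula of Remark~\ref{rmk:Vadja}, and then deduce the three claims by monotonicity. Your exact identity $3F(a)F(b)-F(a+b)=F(a)F(b-2)+F(a-2)F(b)$ is a slightly cleaner way to read off the equality case than the paper's version $F(a+b)=F(a)F(b)+F(a-1)F(b)+F(a)F(b-1)$, and your ratio bound $2F(a+1)\ge 3F(a)$ replaces the paper's term-by-term expansion for the second estimate, but these are minor algebraic variations on the same argument.
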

\begin{proof}
 By Remark \ref{rmk:Vadja}, substituting $F(a+1)=F(a)+F(a-1)$  we obtain
\begin{equation}\label{ine_lema3}
F(a+b)=F(a)F(b)+F(a-1)F(b)+F(a)F(b-1)\le 3F(a)F(b)\, .
\end{equation}
As $F(c)$ is increasing in $c$, this gives \eqref{eq:F(n+m)<=3F(n)F(m)1}.

Suppose that $c\ge a+b+1$. Since $F(c)\ge F(a+b+1)$, to prove \eqref{eq:F(n+m)<=3F(n)F(m)2}  it is enough to show that 
\begin{equation}\label{condlema3.1}
F(a+b+1)> 3F(a)F(b).
\end{equation}
As before, developing the left-hand side of \eqref{condlema3.1} using Vajda's identity yields
\begin{multline*}F(a+b+1)=F(a+1)F(b+1)+F(a)F(b)=2F(a)F(b)+F(a)F(b-1)+F(a-1)F(b)+F(a-1)F(b-1)\\=2F(a)F(b)+F(a)F(b-1)+2F(a-1)F(b-1)+F(a-1)F(b-2)\,.
\end{multline*}
On the other hand, working on the right-hand side of \eqref{condlema3.1} we obtain
\begin{multline*}3F(a)F(b)=2F(a)F(b)+F(a)F(b)=2F(a)F(b)+F(a)F(b-1)+F(a)F(b-2)=\\2F(a)F(b)+F(a)F(b-1)+F(a-1)F(b-2)+F(a-2)F(b-2)\, .
\end{multline*}
Comparing both sides, we obtain that $2F(a-1)F(b-1)> F(a-2)F(b-2)$, which is clearly true for $a\geq 2$ and $b\geq 2$. Finally, we study the equality case. By \eqref{eq:F(n+m)<=3F(n)F(m)2}, we must have $c\le a+b$. As $a+b\ge 4$, then for each $c<a+b$ it holds that
$F(c)<F(a+b)\le 3F(a)F(b)$
consequently $c=a+b$. The equality in \eqref{ine_lema3} is only attained if $F(a-1)=F(a)$ and $F(b-1)=F(b)$. That only occurs in the case $a=b=2$, proving \eqref{eq:F(n+m)<=3F(n)F(m)3}.
\end{proof}

%
%

\section{Markoff-Fibonacci \texorpdfstring{$m$}{m}-triples}
\label{section:non-minimal}

Henceforth, we shall denote
$$m(a,b,c)=F(a)^2+F(b)^2+F(c)^2-3F(a)F(b)F(c)$$
so that $(F(a),F(b),F(c))$ is a Markoff-Fibonacci $m$-triple if and only if $m=m(a,b,c)>0$. In this section, we derive conditions on $(a,b,c)$ to ensure $m(a,b,c)>0$.

Since we want to study ordered triples $(F(a),F(b),F(c))$ composed of positive numbers, without loss of generality, we will assume from now on that $2\le a \le b \le c$.

\begin{lemma}
\label{lemma:minimalPositive}
If $2\le a\le b\le c$ with $c\ge a+b+1$, then $m(a,b,c)>0$.
\end{lemma}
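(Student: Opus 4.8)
The plan is to reduce the statement immediately to the inequality $F(c) > 3F(a)F(b)$, which is exactly what Lemma~\ref{lemma:F(n+m)<=3F(n)F(m)} supplies under the stated hypothesis. Since $a,b\ge 2$ and $c\ge a+b+1$, part \eqref{eq:F(n+m)<=3F(n)F(m)2} of that lemma gives $F(c) > 3F(a)F(b)$.

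From here I would simply multiply this strict inequality by $F(c)>0$ to obtain $F(c)^2 > 3F(a)F(b)F(c)$, and substitute into the definition of $m(a,b,c)$:
$$m(a,b,c) = F(a)^2 + F(b)^2 + F(c)^2 - 3F(a)F(b)F(c) > F(a)^2 + F(b)^2.$$
Because $a\ge 2$ and $b\ge 2$ we have $F(a)\ge 1$ and $F(b)\ge 1$, so the right-hand side is at least $2$, and in particular $m(a,b,c)>0$.

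I do not expect any genuine obstacle here: the arithmetic content of the claim has been entirely absorbed into Lemma~\ref{lemma:F(n+m)<=3F(n)F(m)}, and what remains is a one-line positivity estimate. If one preferred a self-contained argument not citing that lemma, one could instead bound $F(c)\ge F(a+b+1)$ and expand $F(a+b+1)$ via Vajda's identity (Remark~\ref{rmk:Vadja}) exactly as in the proof of Lemma~\ref{lemma:F(n+m)<=3F(n)F(m)}, reducing to $2F(a-1)F(b-1) > F(a-2)F(b-2)$; but this is strictly more work and adds nothing, so the short route above is preferable.
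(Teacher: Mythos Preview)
Your proof is correct and essentially identical to the paper's: both invoke Lemma~\ref{lemma:F(n+m)<=3F(n)F(m)} to get $F(c)>3F(a)F(b)$ and then observe that $m(a,b,c)=F(a)^2+F(b)^2+F(c)\bigl(F(c)-3F(a)F(b)\bigr)$ is a sum of positive terms. The only cosmetic difference is that the paper writes the factorization explicitly rather than multiplying through by $F(c)$.
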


\begin{proof}
Since
$$m(a,b,c)=F(a)^2+F(b)^2+F(c)\left(F(c)-3F(a)F(b)\right)\, ,$$
and by Lemma \ref{lemma:F(n+m)<=3F(n)F(m)}, $F(c)>3F(a)F(b)$, then all of the terms in the previous factorization are positive, and thus $m(a,b,c)>0$.
\end{proof}

\begin{lemma}\label{Javier_non-minimal} If $3\le a\le b\le c\le a+b,$ then $m(a,b,c)<0$.
\end{lemma}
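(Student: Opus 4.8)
The plan is to regard $m(a,b,c)$ as a convex quadratic in the single variable $t=F(c)$. Writing
$$g(t)=t^{2}-3F(a)F(b)\,t+F(a)^{2}+F(b)^{2},$$
we have $m(a,b,c)=g(F(c))$, and since the leading coefficient is positive, $g$ is convex. Because $b\le c\le a+b$ and the Fibonacci sequence is nondecreasing (strictly increasing on indices $\ge 2$), the value $F(c)$ lies in the closed interval $[F(b),F(a+b)]$. A convex function on a closed interval attains its maximum at an endpoint, so $g(F(c))\le\max\{g(F(b)),g(F(a+b))\}$; hence it suffices to prove $g(F(b))<0$ and $g(F(a+b))<0$.

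For the left endpoint one simply substitutes $t=F(b)$, obtaining
$$g(F(b))=2F(b)^{2}-3F(a)F(b)^{2}+F(a)^{2}=F(b)^{2}\bigl(2-3F(a)\bigr)+F(a)^{2}.$$
Since $a\ge 3$ forces $F(a)\ge 2$, the coefficient $2-3F(a)$ is at most $-4$, and as $F(b)\ge F(a)$ this gives $g(F(b))\le -4F(a)^{2}+F(a)^{2}=-3F(a)^{2}<0$.

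For the right endpoint the key ingredient is the identity
$$3F(a)F(b)-F(a+b)=F(a-2)F(b)+F(a)F(b-2),$$
which I would derive from the relation $F(a+b)=F(a+1)F(b)+F(a)F(b-1)$ recalled in Remark \ref{rmk:Vadja}, combined with $F(a+1)=F(a)+F(a-1)$ and the elementary equalities $F(a)-F(a-1)=F(a-2)$ and $F(b)-F(b-1)=F(b-2)$. Substituting this into $g(F(a+b))=F(a+b)\bigl(F(a+b)-3F(a)F(b)\bigr)+F(a)^{2}+F(b)^{2}$ yields
$$g(F(a+b))=F(a)^{2}+F(b)^{2}-F(a+b)\bigl(F(a-2)F(b)+F(a)F(b-2)\bigr).$$
Because $a\ge 3$ and $b\ge 3$ we have $F(a-2)\ge 1$, $F(b-2)\ge 1$, $F(a+b)>F(b)$ and $F(a+b)>F(a)$, whence $F(a+b)F(a-2)F(b)>F(b)^{2}$ and $F(a+b)F(a)F(b-2)>F(a)^{2}$; adding these, the bracketed term strictly exceeds $F(a)^{2}+F(b)^{2}$, so $g(F(a+b))<0$. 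Combining the two endpoint bounds gives $m(a,b,c)=g(F(c))<0$.

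The main obstacle is the right-endpoint estimate. That $F(a+b)<3F(a)F(b)$ is immediate from the equality case \eqref{eq:F(n+m)<=3F(n)F(m)3} of Lemma \ref{lemma:F(n+m)<=3F(n)F(m)}, but a merely qualitative inequality does not suffice: one needs the quantitative identity above and must check that the resulting surplus $F(a+b)\bigl(F(a-2)F(b)+F(a)F(b-2)\bigr)$ genuinely dominates $F(a)^{2}+F(b)^{2}$. This is exactly where the hypothesis $a\ge 3$ (rather than merely $a\ge 2$) enters; the left-endpoint bound and the reduction to the two endpoints are routine.
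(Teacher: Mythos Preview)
Your proof is correct and follows the same overall strategy as the paper: both treat $m(a,b,c)$ as an upward-opening parabola in $F(c)$ and reduce to checking negativity at the two endpoints $F(b)$ and $F(a+b)$, with essentially identical arguments at the left endpoint.

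The right-endpoint arguments differ, and yours is the cleaner one. The paper proceeds in two cases: for $a=b$ it uses $F(2a)\ge 2F(a)^2$ together with $F(2a)-3F(a)^2<-1$; for $a\ne b$ it invokes the numerical tables from the appendix to obtain $\frac{1}{F(a)}+\frac{F(a+b)}{F(a)F(b)}<3$ and then bounds $F(a)^2+F(b)^2<F(b)F(b+1)<(3F(a)F(b)-F(a+b))F(a+b)$. Your approach instead isolates the exact identity $3F(a)F(b)-F(a+b)=F(a-2)F(b)+F(a)F(b-2)$ (an immediate consequence of Vajda) and then uses only the monotonicity of Fibonacci numbers and $F(a-2),F(b-2)\ge 1$ to conclude. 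This avoids both the case split and any appeal to numerical bounds, and it makes transparent exactly where the hypothesis $a\ge 3$ is used. The paper's route has the minor advantage of reusing machinery (the tables) that is needed elsewhere anyway, but as a self-contained proof of this lemma yours is more elementary and more informative.
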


\begin{proof}
As $F(x)$ is an increasing function, $1<F(a)\leq F(b)\leq F(c)\leq F(a+b).$
Consider the parabola $f(x)=x^2-3F(a)F(b)x+F(a)^2+F(b)^2.$  Since $f(x)$ is an upward-opening parabola, its absolute maximum in
$F(b)\leq x\leq F(a+b)$ is attained at one of the endpoints. Therefore, to prove that $m(a,b,c)=f(F(c))<0,$ it suffices to prove that $f(F(b))$ and $f(F(a+b))$ are both negative.

\noindent First, we prove that $f(F(b))<0.$ Indeed,
$$f(F(b))=F(a)^2+2F(b)^2-3F(a)F(b)^2\le F(b)^2(3-3F(a))<0.$$
Let us prove that $f(F(a+b))<0$. We have $F(a)=F(a-1)+F(a-2)\le 2F(a-1)$. Thus, $F(2a)=F(a)^2+2F(a)F(a-1)\ge 2F(a)^2$. Equation \eqref{ine_lema3} implies $F(2a)-3F(a)^2<-1$ for $a\ge 3$ because $F(a-1)<F(a)$. Therefore, in the case $a=b$,
$$f(F(2a))=m(a,a,2a)=2F(a)^2+F(2a)(F(2a)-3F(a)^2)<2F(a)^2-F(2a)\le 0.$$
Thus, we may assume that $a\ne b$. From Remark \ref{rmk:Vadja} we have $\frac{F(a+b)}{F(a)F(b)}=\frac{F(a+1)}{F(a)}+\frac{F(b-1)}{F(b)}\le \frac{1}{0.6}+0.6667<2.4$. Using Tables \ref{table:1} and \ref{table:2} it follows that
$\frac{1}{F(a)}+\frac{F(a+b)}{F(a)F(b)}<3$ and thus 
$F(b)+F(a+b)<3F(a)F(b)$. Hence
$$F(a)^2+F(b)^2<F(b)F(b+1)<(3F(a)F(b)-F(a+b))F(a+b)$$
giving $f(F(a+b))=m(a, b, a+b)<0$.
\end{proof}

\begin{lemma}\label{discard_luis} If $b\ge 2,$ then $m(2,b,b+1) \leq 0$, and the equality is only attained if $b=2$.
\end{lemma}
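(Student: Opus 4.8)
The plan is to compute $m(2,b,b+1)$ explicitly using Binet's formula (or Vajda's identity) and show the resulting expression is non-positive, with equality exactly at $b=2$. Since $F(2)=1$, we have
$$m(2,b,b+1)=1+F(b)^2+F(b+1)^2-3F(b)F(b+1).$$
The first step is to recognize that $F(b)^2+F(b+1)^2=F(2b+1)$ (a standard Fibonacci identity, which also follows from Vajda's identity in Remark \ref{rmk:Vadja} with suitable indices), so that
$$m(2,b,b+1)=1+F(2b+1)-3F(b)F(b+1).$$
Now I would compare $F(2b+1)$ with $3F(b)F(b+1)$. Writing $3F(b)F(b+1)=3F(b)(F(b)+F(b-1))=3F(b)^2+3F(b)F(b-1)$ and using $F(2b+1)=F(b+1)^2+F(b)^2$ together with $F(2b)=F(b)(2F(b+1)-F(b))=2F(b)F(b+1)-F(b)^2$, one obtains an identity of the form $3F(b)F(b+1)-F(2b+1)=F(2b-1)+\text{something}$; more directly, I would just show $3F(b)F(b+1)\ge F(2b+1)+1$ for $b\ge 2$ with equality iff $b=2$.

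Concretely, the cleanest route: by Lemma \ref{lemma:F(n+m)<=3F(n)F(m)} applied with $a\to b$, $b\to b+1$, $c\to 2b+1$, we have $F(2b+1)\le 3F(b)F(b+1)$ since $2b+1\le b+(b+1)$, and moreover by \eqref{eq:F(n+m)<=3F(n)F(m)3} equality holds iff $b=b+1=2$, which is impossible, so in fact $F(2b+1)<3F(b)F(b+1)$ strictly for all $b\ge 2$. This gives $m(2,b,b+1)=1+F(2b+1)-3F(b)F(b+1)\le 0$ provided the gap $3F(b)F(b+1)-F(2b+1)$ is at least $1$. For $b=2$ one checks directly $m(2,2,3)=1+1+4-3\cdot 1\cdot 1\cdot 2=0$. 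For $b\ge 3$, I would quantify the gap: from the proof of \eqref{eq:F(n+m)<=3F(n)F(m)2}, $F(b+(b+1)+1)=F(2b+2)>3F(b)F(b+1)$, but more usefully, expanding $3F(b)F(b+1)-F(2b+1)$ via Vajda's identity yields $3F(b)F(b+1)-F(b)F(b+2)-\cdots$, collapsing to $F(b)F(b-1)+F(b-1)F(b)+\cdots$; each such term is $\ge F(2)F(1)=1$ for $b\ge 3$, and one of them is strictly larger, forcing $3F(b)F(b+1)-F(2b+1)\ge 2>1$ so $m(2,b,b+1)<0$ strictly.

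The main obstacle is purely bookkeeping: pinning down the exact size of the gap $3F(b)F(b+1)-F(2b+1)$ so as to separate the $b=2$ equality case from the strict inequality for $b\ge 3$. I expect this to reduce, after applying Vajda's identity to rewrite $3F(b)F(b+1)$ as $F(2b+1)$ plus a sum of products of smaller Fibonacci numbers, to verifying that this remainder equals exactly $1$ when $b=2$ and is $\ge 2$ when $b\ge 3$ — a short finite check combined with monotonicity of Fibonacci products. No analytic estimates or Tables are needed here; it is a self-contained identity manipulation.
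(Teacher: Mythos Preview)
Your approach is correct but takes a different route from the paper. You rewrite $m(2,b,b+1)=1+F(2b+1)-3F(b)F(b+1)$ via $F(b)^2+F(b+1)^2=F(2b+1)$ and then must bound the gap $3F(b)F(b+1)-F(2b+1)\ge 1$. The clean identity your sketch is reaching for is
\[
3F(b)F(b+1)-F(2b+1)=F(b)F(b-1)+F(b+1)F(b-2),
\]
which follows from \eqref{ine_lema3} (giving $F(2b+1)=F(b)F(b+1)+F(b-1)F(b+1)+F(b)^2$) and a two-line rearrangement using $2F(b)-F(b-1)=F(b)+F(b-2)$. This equals $1$ at $b=2$ (since $F(0)=0$) and is at least $F(3)F(2)+F(4)F(1)=5$ for $b\ge 3$, which finishes your argument cleanly. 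Your ``$F(b)F(b-1)+F(b-1)F(b)+\cdots$'' was close but not quite the right decomposition.

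The paper instead uses the telescoping identity $\sum_{k=1}^{b+1}F(k)^2=F(b+1)F(b+2)$ to obtain $1+F(b)^2+F(b+1)^2=F(1)^2+F(b)^2+F(b+1)^2\le F(b+1)F(b+2)$ in one stroke, and then combines this with $F(b+2)\le 3F(b)$ (equality exactly at $b=2$). This is slightly slicker because the troublesome ``$+1$'' is absorbed as $F(1)^2$ into the sum, so no separate gap estimate is needed and the equality case falls out automatically from the two chained inequalities. Your route forces you to compute the gap explicitly, but once the displayed identity above is written down the two proofs are of comparable length.
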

\begin{proof}
This is equivalent to proving
$$1+F(b)^2+F(b+1)^2\leq 3F(b)F(b+1).$$
Since $b\ge 2$, then
$$1+F(b)^2+F(b+1)^2=F(1)^2+F(b)^2+F(b+1)^2\le \sum_{k=1}^{b+1} F(k)^2 = F(b+1)F(b+2)$$
and, by Remark \ref{rmk:FibonacciQuotientBounds}, $F(b+2)\le 3F(b)$, with equality attained at $b=2$, so the lemma follows.
\end{proof}

As a consequence of the previous lemmata, we obtain a complete classification of triples\\ $(F(a),F(b),F(c))$ that are $m$-triples for a positive $m$.

\begin{proposition}
\label{prop:positivity}
Suppose that $2\le a\le b\le c$. Then $m(a,b,c)>0$ if and only if $a+b+1\le c$ or $(a,b,c)=(2,b,b+2),$ for some even $b\ge 2$.
\end{proposition}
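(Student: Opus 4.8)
The plan is to combine the four preceding lemmata into a clean case analysis on the relation between $c$ and $a+b$. The only possibilities are $c\ge a+b+1$, $c=a+b$, or $c\le a+b-1$, so I would organize the proof around these three ranges and show that $m(a,b,c)>0$ occurs precisely in the first range together with the single exceptional family $(2,b,b+2)$ with $b$ even.

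First I would dispose of the easy direction. If $c\ge a+b+1$, then Lemma \ref{lemma:minimalPositive} gives $m(a,b,c)>0$ immediately. If $(a,b,c)=(2,b,b+2)$, then $c=b+2=a+b$, so Lemma \ref{lemma:minimalPositive} does not apply; here I would compute $m(2,b,b+2)$ directly using Binet's formula (or Vajda's identity) and simplify. The claim in the abstract that these are solutions for $m=2$ strongly suggests $m(2,b,b+2)=2$ when $b$ is even and $m(2,b,b+2)=-2$ (or at least $\le 0$) when $b$ is odd, the sign being governed by the $(-1)^n$ factor in Vajda-type identities; so the computation should reduce to showing $F(b)^2+F(b+2)^2+1-3F(b)F(b+2)=(-1)^b\cdot 2$ or similar, which is a short induction or a direct Binet expansion. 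This handles the ``if'' direction.

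For the converse, suppose $m(a,b,c)>0$ with $2\le a\le b\le c$ and $c\le a+b$; I must show $(a,b,c)=(2,b,b+2)$ with $b$ even. The main obstacle — the one genuinely requiring care — is that Lemma \ref{Javier_non-minimal} only covers $a\ge 3$, so the borderline cases with $a=2$ must be treated by hand. If $a\ge 3$ and $c\le a+b$, Lemma \ref{Javier_non-minimal} gives $m(a,b,c)<0$, a contradiction, so necessarily $a=2$. With $a=2$ and $c\le b+2$, the candidates are $c\in\{b,b+1,b+2\}$ (recalling $c\ge b\ge a=2$). The case $c=b+1$ is excluded by Lemma \ref{discard_luis}, which gives $m(2,b,b+1)\le 0$. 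The case $c=b$ reduces to $m(2,b,b)=1+2F(b)^2-3F(b)^2=1-F(b)^2\le 0$ for $b\ge 2$, with equality only at $b=2$ — but equality means $m=0$, not $m>0$, so this is excluded too (and the degenerate $b=2$ point $(2,2,2)$ must be checked: $m(2,2,2)=3-3=0$, not positive). That leaves only $c=b+2$, i.e. $(2,b,b+2)$; and from the explicit formula $m(2,b,b+2)=(-1)^b\cdot 2$ established above, positivity forces $b$ even. Assembling these pieces yields the proposition.

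One bookkeeping point to watch: Lemma \ref{discard_luis} and the $c=b$ computation both naturally start at $b\ge 2$, so the small cases $a=b=2$, $c\in\{2,3,4\}$ are all covered (with $(2,2,4)$ being the exceptional family's first member, $b=2$ even, giving $m=2>0$). I would state the three-way split explicitly at the start, note that $c\le a+b-1$ forces $a\ge 3$ is impossible by Lemma \ref{Javier_non-minimal} applied with $c\le a+b$, and that $a=2,\ b\le c\le b+2$ leaves exactly the three subcases above, so the argument is genuinely just a finite, transparent reduction once the explicit value of $m(2,b,b+2)$ is in hand.
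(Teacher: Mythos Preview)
Your proposal is correct and follows essentially the same route as the paper: Lemma~\ref{lemma:minimalPositive} for $c\ge a+b+1$, Lemma~\ref{Javier_non-minimal} to dispose of $a\ge 3$ with $c\le a+b$, and then the three subcases $c\in\{b,b+1,b+2\}$ when $a=2$ handled by direct computation and Lemma~\ref{discard_luis}. One small correction: the explicit value is $m(2,b,b+2)=1+(-1)^b$, not $(-1)^b\cdot 2$; in particular, for odd $b$ you get $m=0$ (the Markoff branch from \cite{LS}), not $-2$, though your hedge ``or at least $\le 0$'' covers this and the conclusion is unaffected.
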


\begin{proof}
If $c\ge a+b+1,$ then $m(a,b,c)>0$ by Lemma \ref{lemma:minimalPositive}. If $(a,b,c)=(2,b,b+2)$, Vajda's identity (see Remark \ref{rmk:Vadja}) for $(i=1, j=1, n=b)$ implies
\begin{multline}
\label{eq:2-non-minimal}
m(2,b,b+2)=1+F(b)^2+F(b+2)^2-3F(b)F(b+2)
=1+(F(b+2)-F(b))^2-F(b)F(b+2)\\=1+F(b+1)^2-F(b)F(b+2)=1+(-1)^b\,,
\end{multline}
which is positive if and only if $b$ is even. If $b$ is odd, we obtain a branch of the Markoff tree ($m=0$) described in the article \cite{LS}. For $b>2$ even, we obtain the superior branch of the $2$-Markoff tree with minimal triple $(1,1,3)$, represented in Figure \ref{2-markovtree}.

Suppose now that $c\le a+b$ and that $(a,b,c)\ne (2,b,b+2)$. If $a \ge 3$, then Lemma \ref{Javier_non-minimal} shows that $m(a,b,c)<0$. If $a=2$ and $c\ne b+2$ but $c\le a+b=b+2$, then either $(a,b,c)=(2,b,b)$ or $(a,b,c)=(2,b,b+1)$. If $(a,b,c)=(2,b,b),$ we obtain
$$m(a,b,c)=m(2,b,b)=1+2F(b)^2-3F(b)^2=1-F(b)^2\le 0.$$
If $(a,b,c)=(2,b,b+1)$ then Lemma \ref{discard_luis} shows that $m(a,b,c)=m(2,b,b+1)\le 0$.
\end{proof}

%
%

\section{Proofs of main results}
\label{section:minimal}

Using Proposition \ref{prop:positivity}, we obtain a specific characterisation of minimal and non-minimal Markoff-Fibonacci $m$-triples, when $m>0$. Recall that a Markoff $m$-triple $(x,y,z)$ with $m>0$ is called minimal (c.f. \cite[Definition 2.2]{SC}) if $z\ge 3xy\,.$

\begin{proposition}
\label{prop:minimalBound}
Let $a\ge 2$. Then for $a\le b\le c$, the $m$-triple $(F(a),F(b),F(c))$, with $m>0$, is minimal if and only if either $c\ge a+b+1$ or $a=b=2$  and $c= 4$.
\end{proposition}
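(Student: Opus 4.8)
The plan is to translate the minimality condition $F(c) \ge 3F(a)F(b)$ directly through Lemma \ref{lemma:F(n+m)<=3F(n)F(m)}, and then reconcile the resulting case distinction with the positivity constraint coming from Proposition \ref{prop:positivity}. First I would recall that, by definition of minimality, $(F(a),F(b),F(c))$ is minimal precisely when $F(c) \ge 3F(a)F(b)$. Since $a \ge 2$, Lemma \ref{lemma:F(n+m)<=3F(n)F(m)} applies with the roles $a, b$ there matching $a, b$ here and $c$ there matching $c$ here: the strict inequality $F(c) > 3F(a)F(b)$ holds if and only if $c \ge a+b+1$, and the equality $F(c) = 3F(a)F(b)$ holds if and only if $a = b = 2$ and $c = 4$. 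Combining these two, $F(c) \ge 3F(a)F(b)$ holds if and only if $c \ge a+b+1$, or $a=b=2$ and $c=4$.

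The only subtlety is that the statement is about $m$-triples with $m > 0$, so I must check that in each of the two claimed cases the triple is genuinely an $m$-triple for positive $m$, and conversely that the characterization is not vacuous. If $c \ge a+b+1$, then Lemma \ref{lemma:minimalPositive} gives $m(a,b,c) > 0$, so the triple is an $m$-triple for a positive $m$ and it is minimal. If $a = b = 2$ and $c = 4$, then $m(2,2,4) = 1 + 1 + F(4)^2 - 3 F(4) = 9 + 2 - 9 = 2 > 0$ (note $(2,2,4) = (2,b,b+2)$ with $b=2$ even, consistent with Proposition \ref{prop:positivity} and with \eqref{eq:2-non-minimal}), so again this is a legitimate minimal $m$-triple. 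Conversely, suppose $(F(a),F(b),F(c))$ is a minimal $m$-triple with $m > 0$. By Proposition \ref{prop:positivity}, either $c \ge a+b+1$ — which is one of the claimed cases — or $(a,b,c) = (2,b,b+2)$ for some even $b \ge 2$. In the latter case minimality forces $F(b+2) \ge 3F(2)F(b) = 3F(b)$; by Lemma \ref{lemma:F(n+m)<=3F(n)F(m)} this requires either $b+2 \ge 2 + b + 1$, which is impossible, or $a = b = 2$ and $c = 4$, i.e. $b = 2$. Hence the only minimal triple of the form $(2,b,b+2)$ is $(1,1,3)$, corresponding to $a=b=2$, $c=4$, which is exactly the second claimed case.

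I expect no serious obstacle here; the proposition is essentially a bookkeeping corollary of Lemma \ref{lemma:F(n+m)<=3F(n)F(m)} and Proposition \ref{prop:positivity}. The one point requiring mild care is making sure the "$m>0$" hypothesis in the statement is handled symmetrically in both directions — that is, not merely reading off when $F(c) \ge 3F(a)F(b)$ holds, but confirming that the triples satisfying this are indeed $m$-triples for some positive $m$ and that no spurious case (such as a non-minimal $(2,b,b+2)$ with $b > 2$) sneaks in. The computation $m(2,2,4) = 2$ pins down the exceptional case cleanly, and Lemma \ref{lemma:F(n+m)<=3F(n)F(m)}\eqref{eq:F(n+m)<=3F(n)F(m)3} is exactly the tool that isolates $c = a+b$ as attainable only when $a=b=2$.
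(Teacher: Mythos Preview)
Your proposal is correct and follows essentially the same approach as the paper: both arguments combine Proposition~\ref{prop:positivity} (to enumerate the triples with $m>0$) with Lemma~\ref{lemma:F(n+m)<=3F(n)F(m)} (to decide which of those satisfy $F(c)\ge 3F(a)F(b)$). The only difference is cosmetic ordering---the paper filters the positivity list by minimality, whereas you characterize minimality first and then verify positivity---but the logical content is identical.
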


\begin{proof}
By Proposition \ref{prop:positivity}, the values of $(a,b,c)$ for which $m(a,b,c)>0$ are those with $c\ge a+b+1$ or $(a,b,c)=(2,b,b+2),$ for some even $b\ge 2$. Amongst those triples, the minimal ones are those which also satisfy $F(c)\ge 3F(a)F(b)$. By Lemma \ref{lemma:F(n+m)<=3F(n)F(m)}, these are precisely the triples such that $c\ge a+b+1$ or $(a,b,c)=(2,2,4)$.
\end{proof}

Using this proposition, we will split the problem of classifying the Fibonacci solutions to the $m$-Markoff equation \eqref{Markoffeq} into classifying the non-minimal and minimal triples, starting by the classification of non-minimal $m$-triples.

\subsection{Non-minimal case}
\begin{proposition} \label{prop:non-minimal}
Every non-minimal Markoff-Fibonacci $m$-triple with $m>0$ is a $2$-triple of the form $(1,F(b),F(b+2)),$ where $b>2$ is an even number.
\end{proposition}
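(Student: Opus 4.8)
The plan is to read off the statement almost immediately from the two structural results already established, namely Proposition~\ref{prop:positivity}, which pins down exactly when $m(a,b,c)>0$, and Proposition~\ref{prop:minimalBound}, which, among those positive-$m$ triples, identifies the minimal ones. So the proof will be a short case analysis rather than a new computation.

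Concretely, I would fix a non-minimal Markoff-Fibonacci $m$-triple with $m>0$ and write it, following the standing convention, as $(F(a),F(b),F(c))$ with $2\le a\le b\le c$. Since $m=m(a,b,c)>0$, Proposition~\ref{prop:positivity} leaves exactly two possibilities: either $c\ge a+b+1$, or $(a,b,c)=(2,b,b+2)$ for some even $b\ge 2$. The first possibility is incompatible with non-minimality: by Proposition~\ref{prop:minimalBound}, any $m$-triple with $m>0$ and $c\ge a+b+1$ is minimal. Hence $(a,b,c)=(2,b,b+2)$ with $b\ge 2$ even. I would then discard $b=2$, since the triple $(2,2,4)$ is also minimal by Proposition~\ref{prop:minimalBound} (equivalently $F(4)=3=3F(2)F(2)$); therefore $b>2$. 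Finally, using $F(2)=1$ the triple is $(1,F(b),F(b+2))$, and the computation recorded in \eqref{eq:2-non-minimal} gives $m(2,b,b+2)=1+(-1)^{b}=2$ for even $b$, so it is indeed a $2$-triple of the asserted form.

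I do not expect any genuine obstacle here, as the substantive work has been front-loaded into Propositions~\ref{prop:positivity} and~\ref{prop:minimalBound}; the only point that must not be overlooked is the exclusion of the borderline triple $(2,2,4)$, which satisfies $m(2,2,4)=2>0$ and has the shape $(2,b,b+2)$ with $b$ even, yet is minimal and hence does not belong to the family in the statement.
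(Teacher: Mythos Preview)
Your proposal is correct and follows essentially the same route as the paper's own proof: invoke Proposition~\ref{prop:positivity} to reduce to either $c\ge a+b+1$ or $(2,b,b+2)$ with $b$ even, use Proposition~\ref{prop:minimalBound} to discard the first case and the boundary triple $(2,2,4)$, and then apply \eqref{eq:2-non-minimal} to conclude $m=2$. The only cosmetic difference is that the paper phrases the second step contrapositively (non-minimal implies $c\le a+b$ and $(a,b,c)\ne(2,2,4)$), whereas you argue directly.
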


\begin{proof}
By Proposition \ref{prop:positivity}, any Markoff-Fibonacci $m$-triple $(F(a),F(b),F(c))$ with $m>0$ must either satisfy $c\ge a+b+1$, or be of the form $(1,F(b),F(b+2))$ for some even $b\ge 2$. By Proposition \ref{prop:minimalBound}, non-minimal $m$-triples satisfy $c\le a+b$ with $(a,b,c)\ne (2,2,4)$. Thus, all non-minimal Markoff-Fibonacci $m$-triples must be of the form $(1,F(b),F(b+2))$ with even $b\ge 3$ and, by \eqref{eq:2-non-minimal}, they are all $2$-triples. In particular, they form a branch of the $2$-Markoff tree spanned by $(1,3,8)$.
\end{proof}

\begin{remark}
From \cite{SC}, we know that for $m=2$ there exists exactly one minimal $2$-triple, which is the only remaining Fibonacci solution to the $m$-Markoff equation for $m=2$, namely,  $(1,1,3)=(F(2),F(2),F(4))$.
\end{remark}

\subsection{Minimal case}

We will now focus on the minimal Markoff-Fibonacci $m$-triples, with $m>0$. The following proposition shows that contrary to the non-minimal Fibonacci triples, which only exist for $m=2$, minimal Markoff-Fibonacci $m$-triples exist for an infinite number of values of $m$.

\begin{proposition}
\label{prop:infinite}
There exists an infinite number of values of $m>0$ for which the equation (\ref{Markoffeq})
admits a solution composed of Fibonacci numbers. Moreover, these solution triples are minimal.
\end{proposition}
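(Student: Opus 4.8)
The plan is to produce an explicit infinite family of minimal Markoff-Fibonacci $m$-triples and observe that the associated values of $m$ are all distinct. By Proposition \ref{prop:minimalBound}, a triple $(F(a),F(b),F(c))$ with $a\le b\le c$ and $m(a,b,c)>0$ is minimal precisely when $c\ge a+b+1$ (or the exceptional $(2,2,4)$). So the natural choice is to fix $a$ and $b$ small—say $a=b=2$, i.e. $F(a)=F(b)=1$—and let $c$ range over all integers with $c\ge a+b+1=5$. For each such $c$ one gets a genuine minimal $m$-triple $(1,1,F(c))$ with $m=m(2,2,c)=2+F(c)^2-3F(c)=2+F(c)(F(c)-3)$.

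The key remaining step is to check that the map $c\mapsto m(2,2,c)$ is injective for $c\ge 5$, so that infinitely many distinct values of $m$ arise. This is immediate: for $c\ge 5$ we have $F(c)\ge 5>3$, so the function $t\mapsto 2+t(t-3)$ is strictly increasing on $[F(5),\infty)$, and $F(c)$ is strictly increasing in $c$; hence $m(2,2,c)$ is strictly increasing in $c$ and in particular takes infinitely many values. Each of these values $m$ therefore admits at least one Markoff-Fibonacci $m$-triple, and that triple—namely $(1,1,F(c))$—is minimal, which is exactly the assertion of the proposition. (One could equally well take the family $(1,F(b),F(c))$ with $c=b+2$ and $b$ odd, which by \eqref{eq:2-non-minimal} would give $m=0$, so that does not work; this is why one must push $c$ strictly beyond $b+2$, e.g. $c=b+3$, or simply fix $b$ and let $c\to\infty$. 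The cleanest route is the $a=b=2$ family just described.)

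I do not anticipate a serious obstacle: the content is entirely contained in Proposition \ref{prop:minimalBound} (which guarantees minimality and positivity for $c\ge a+b+1$) together with the elementary monotonicity of $c\mapsto m(2,2,c)$. The only point requiring a line of care is to confirm that $m(2,2,c)>0$ for all $c\ge 5$, but this follows at once from Lemma \ref{lemma:minimalPositive} since $c\ge 5 = 2+2+1$. If one wishes to emphasise that infinitely many \emph{different} trees of Markoff $m$-triples occur, one can additionally note that each $m=m(2,2,c)$ is distinct, so by the correspondence in \cite{SC} between minimal triples and trees these give rise to genuinely different situations; but for the statement as written, injectivity of $c\mapsto m(2,2,c)$ is all that is needed.
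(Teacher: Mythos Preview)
Your proposal is correct and follows essentially the same approach as the paper: fix $a$ and $b$ (the paper leaves them general, you specialize to $a=b=2$), let $c$ range over $c\ge a+b+1$, invoke Proposition~\ref{prop:minimalBound} and Lemma~\ref{lemma:minimalPositive} for minimality and positivity, and use the monotonicity of the resulting quadratic in $F(c)$ to conclude that infinitely many distinct values of $m$ arise.
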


\begin{proof}
By the characterisation of minimal $m$-triples from Proposition \ref{prop:minimalBound} and by Proposition \ref{prop:positivity}, we know that for each $a$ and $b$ at least 2 and each $c\ge a+b+1$, the triple $(F(a),F(b),F(c))$ is an $m$-triple for some $m>0$ and it is minimal. Observe that if we fix $a$ and $b$ then
$$m(a,b,c)=F(a)^2+F(b)^2+F(c)^2-3F(a)F(b)F(c)$$
is a quadratic equation in $F(c)$ which is positive and strictly increasing in $c$ for each $c\ge a+b+1$ because $F(a+b+1)>3F(a)F(b)>\frac{3}{2}F(a)F(b)$ due to Lemma \ref{lemma:F(n+m)<=3F(n)F(m)}. This shows that, for each $a$ and $b$, there is an infinite number of different values of $m>0$ and $c$ such that $(F(a),F(b),F(c))$ is a minimal $m$-triple.
\end{proof}

Next, we will show that, except for $m=21$, if $m>0$ admits a minimal Markoff-Fibonacci $m$-triple, it is unique up to order. We will devote the rest of the paper to proving such a claim in full generality.

Through the rest of the section, we will assume that $(F(a),F(b),F(c))$ and $(F(a'),F(b'),F(c'))$ are two different ordered Markoff-Fibonacci triples for the same $m>0$, i.e.,  $2\le a\le b\le c$, $2\le a'\le b'\le c'$ with $(a,b,c)\ne (a',b',c')$ and
$$m(a,b,c)=m(a',b',c')=m$$
and, without loss of generality, we may assume that $c\ge c'$.

If we fix an upper bound for $c$, there only exists a finite number of pairs of such triples. The following Lemma has been checked computationally (see Appendix \ref{section:appendix1} for the implementation details) and shows that the claim holds if $c$ is small.

\begin{lemma}
\label{lemma:computationalCheck}
Suppose that $2\le a\le b\le c<20$ and $2\le a'\le b'\le c'\le c<20$. If $m=m(a,b,c)=m(a',b',c')>0$, then either
\begin{itemize}
\item $a=a'$, $b=b'$, and $c=c'$ or,
\item $(a,b,c)=(3,3,7)$ and $(a',b',c')=(2,3,6)$, with $m(a,b,c)=m(a',b',c')=21$ or,
\item $m=2$ and $(a,b,c)$ and $(a',b',c')$ are of the form $(2,b,b+2)$, as described by Proposition \ref{prop:non-minimal}.
\end{itemize}
\end{lemma}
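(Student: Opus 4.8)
The plan is to verify Lemma \ref{lemma:computationalCheck} by an exhaustive but carefully bounded search over all candidate pairs of triples $(a,b,c)$ and $(a',b',c')$ with $2\le a\le b\le c<20$ and $2\le a'\le b'\le c'\le c$. First I would note that by Proposition \ref{prop:positivity} we need only consider $(a,b,c)$ with $c\ge a+b+1$ or of the exceptional form $(2,b,b+2)$ with $b$ even, and similarly for the primed triple; this already cuts the search space substantially. For each such admissible $(a,b,c)$ we compute $m=m(a,b,c)=F(a)^2+F(b)^2+F(c)^2-3F(a)F(b)F(c)$ using the recursion $F(0)=0$, $F(1)=1$, $F(n+1)=F(n)+F(n-1)$, store it in a dictionary keyed by the value of $m$, and then read off every pair of distinct admissible triples sharing the same $m$. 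Since $c<20$, all Fibonacci numbers involved are at most $F(19)=4181$, so every $m$ fits comfortably in machine-integer arithmetic and no overflow or precision issue arises.

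The key steps, in order, are: (i) enumerate the finite list of admissible triples $(a,b,c)$ with $2\le a\le b\le c<20$ satisfying the positivity condition from Proposition \ref{prop:positivity}; (ii) compute $m(a,b,c)$ for each and group triples by this value; (iii) for every group containing more than one triple, list all pairs and check that each pair matches one of the three alternatives in the statement — namely it is the trivial pair, or $\{(3,3,7),(2,3,6)\}$ with $m=21$, or both triples are of the form $(2,b,b+2)$ with even $b$ and $m=2$. The code and its output are deferred to Appendix \ref{section:appendix1}; the proof of the lemma then consists in citing that the exhaustive check was performed and returned exactly the claimed collisions. One should also double-check the two exceptional families: for the $(2,b,b+2)$ family, \eqref{eq:2-non-minimal} gives $m(2,b,b+2)=1+(-1)^b$, which equals $2$ precisely when $b$ is even, confirming that all such triples with $b<18$ even share $m=2$; and a direct computation gives $m(3,3,7)=F(3)^2+F(3)^2+F(7)^2-3F(3)F(3)F(7)=4+4+169-3\cdot 2\cdot 2\cdot 13=177-156=21$ and $m(2,3,6)=1+4+64-3\cdot 1\cdot 2\cdot 8=69-48=21$, matching the triples $(1,2,8)$ and $(2,2,13)$ from Theorem \ref{thm:intro}.

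There is essentially no mathematical obstacle here, since the statement is a finite verification; the only thing to be careful about is correctness and completeness of the enumeration. In particular I would make sure the search is genuinely exhaustive over the closed range (not accidentally excluding boundary cases like $c=a+b+1$, or the non-minimal triples which have $c=a+b<a+b+1$ and would be missed by a naive "minimal only" filter), and that the ordering convention $a\le b\le c$ is enforced consistently so that permuted triples are not double-counted as spurious collisions. The hard part, such as it is, is purely bookkeeping: ensuring the appendix code faithfully reflects the hypotheses of the lemma and that its output is transcribed correctly. Once that is in place, the lemma follows immediately by inspection of the finitely many collisions the search produces.
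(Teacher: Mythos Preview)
Your proposal is correct and takes essentially the same approach as the paper: an exhaustive computational enumeration of admissible triples with $c<20$, grouping them by their $m$-value, and inspecting all collisions (the paper's appendix describes precisely this, deferring the verification to a Python script). The only minor difference is that the paper filters to \emph{minimal} triples via Proposition~\ref{prop:minimalBound} and handles the non-minimal $(2,b,b+2)$ family separately through Proposition~\ref{prop:non-minimal}, whereas you include both families in the search via Proposition~\ref{prop:positivity}; either bookkeeping choice is fine and yields the same conclusion.
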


The rest of this work will focus on showing that it is impossible to find two triples $(F(a),F(b),F(c))$ and $(F(a'),F(b'),F(c'))$ with $m(a,b,c)=m(a',b',c')$ if $c\ge 20$. We will split the proof into two cases. First, we will study pairs of triples with $c>c'$ and we will prove that the only possible pair of triples is the known example from Lemma \ref{lemma:computationalCheck}, namely,  $m(3,3,7)=m(2,3,6)=21$. Then we will analyse the case $c=c'$ and prove that no pair of distinct Markoff-Fibonacci $m$-triples can exist with the same maximal element. Let us start with a useful monotonicity lemma.
\begin{lemma}
\label{lemma:increasing}
Suppose that $2\le a\le b\le c$ and $c\ge 5$. Suppose that $2\le a\le a'\le c$ and $b\le b'\le c$. Then
$$m(a,b,c)\ge m(a',b',c)$$
and the equality holds if and only if $a=a'$ and $b=b'$. In particular, if $(F(a),F(b),F(c))$ is an ordered minimal Markoff-Fibonacci $m$-triple with $m>0$, then
$$m(2,2,c)\ge m(a,b,c) \ge m(a,c-a-1,c).$$
\end{lemma}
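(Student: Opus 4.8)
The plan is to prove the inequality $m(a,b,c) \ge m(a',b',c)$ by a discrete monotonicity argument in each of the two variables $a$ and $b$ separately, showing that $m(\cdot,b,c)$ is strictly decreasing in its first slot and $m(a,\cdot,c)$ is strictly decreasing in its second slot, on the relevant ranges. Since $c\ge 5$ is fixed throughout, everything reduces to one-variable statements about Fibonacci numbers. Concretely, I would first fix $b$ and $c$ and compare $m(a,b,c)$ with $m(a+1,b,c)$; the difference is
$$m(a,b,c)-m(a+1,b,c)=F(a)^2-F(a+1)^2-3F(b)F(c)\bigl(F(a)-F(a+1)\bigr)=\bigl(F(a+1)-F(a)\bigr)\bigl(3F(b)F(c)-F(a)-F(a+1)\bigr).$$
The first factor $F(a+1)-F(a)=F(a-1)\ge 0$ (and is $>0$ once $a\ge 2$), so the sign is governed by whether $3F(b)F(c) > F(a)+F(a+1)=F(a+2)$. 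Since $a\le b\le c$ and $c\ge 5$, we have $F(b)F(c)\ge F(b)F(c)\ge F(a+2)$ comfortably — indeed $3F(b)F(c)\ge 3F(a)F(a) > F(a+2)$ is immediate for $a\ge 2$ — so each one-step increment strictly decreases $m$. Telescoping from $a$ up to $a'$ gives $m(a,b,c)\ge m(a',b,c)$ with equality iff $a=a'$.

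The second step is entirely parallel: fixing $a'$ and $c$, I would compare $m(a',b,c)$ with $m(a',b+1,c)$, obtaining the analogous factorisation with difference $\bigl(F(b+1)-F(b)\bigr)\bigl(3F(a')F(c)-F(b)-F(b+1)\bigr)=F(b-1)\bigl(3F(a')F(c)-F(b+2)\bigr)$. Here I must be a little more careful because $b$ can be as large as $c$ while $a'\ge 2$ is small, so the bound $3F(a')F(c) > F(b+2)$ needs $3F(a')F(c)\ge 3F(c) > F(c+2)$, which holds since $F(c+2)=F(c+1)+F(c) < 3F(c)$ (strict for $c\ge 1$, using $F(c+1)<2F(c)$). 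Because $b\le c$, we get $F(b+2)\le F(c+2) < 3F(c)\le 3F(a')F(c)$, so the second factor is strictly positive and each step in $b$ strictly decreases $m$. Telescoping from $b$ to $b'$ and combining with the first step yields $m(a,b,c)\ge m(a',b',c)$ with equality iff $a=a'$ and $b=b'$. For the ``in particular'' clause, the lower inequality is the case $(a',b',c)=(a,c-a-1,c)$: by Proposition \ref{prop:minimalBound} a minimal $m$-triple with $m>0$ satisfies $c\ge a+b+1$, i.e. $b\le c-a-1\le c$, and $a\ge 2$, so the monotonicity in $b$ alone gives $m(a,b,c)\ge m(a,c-a-1,c)$; the upper inequality is the case $(a',b',c)=(\text{well, }a,b\text{ increased to }b, \text{ then down})$ — more precisely, apply the full statement with the smaller triple being $(2,2,c)$ and the larger being $(a,b,c)$, noting $2\le a$ and $2\le b$.

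The only genuine subtlety — the ``main obstacle'' — is making sure the hypotheses of the one-variable steps stay valid along the whole telescoping chain: in the first step the running index goes from $a$ to $a'$ with $a'\le c$, so I must check $3F(b)F(c) > F(j+2)$ for all $j\le a'\le c$, which is safe since $F(j+2)\le F(c+2)<3F(c)\le 3F(b)F(c)$; in the second step the running index goes from $b$ to $b'\le c$, needing $3F(a')F(c)>F(j+2)$ for $j\le b'\le c$, again covered by $F(j+2)\le F(c+2)<3F(c)\le 3F(a')F(c)$. The role of the hypothesis $c\ge 5$ is just to guarantee these strict inequalities hold with room to spare (and that the Fibonacci numbers involved are genuinely distinct so that the ``equality iff'' conclusion is clean); small-case sharpness like $F(4)=3F(2)F(2)$ from Lemma \ref{lemma:F(n+m)<=3F(n)F(m)} is exactly why one wants $c$ bounded away from the degenerate range. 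I would close by remarking that the strictness of each increment (via $F(a-1)>0$, $F(b-1)>0$ for indices $\ge 2$, together with the strict positivity of the bracketed factors) forces equality in the telescoped inequality precisely when no increment was taken, i.e. when $a=a'$ and $b=b'$.
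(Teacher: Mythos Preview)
Your proof is correct and follows essentially the same approach as the paper: both establish that $m_c(a,b)=m(a,b,c)$ is strictly decreasing in each variable separately on the range $2\le a,b\le c$, then chain the two monotonicities and read off the ``in particular'' clause from Proposition~\ref{prop:minimalBound}. The paper phrases the monotonicity via the vertex of the parabola $x\mapsto x^2-3F(b)F(c)x+\text{const}$ lying at $\tfrac{3}{2}F(b)F(c)>F(c)$, whereas you factor the discrete one-step difference and telescope; the governing inequality (your $F(j+2)<3F(c)\le 3F(b)F(c)$ is exactly the statement that the argument stays left of the vertex) is the same in both.
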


\begin{proof}
The parabolas
$$p(x)=x^2-3F(b)F(c)x+F(b)^2 +F(c)^2 \quad \text{and} \quad q(x)=x^2 -3F(a)F(c)x+F(a)^2+F(c)^2$$
have their vertices at $x=\frac{3}{2}F(b)F(c)> F(c)$ and $x=\frac{3}{2}F(a)F(c)> F(c)$ respectively, so they are both strictly decreasing for $0\le x\le F(c)$ for all values of $a$ and $b$. As the function mapping $n$ to $F(n)$ is increasing, then for any given fixed $c$ the map
$$m_c : (a,b) \mapsto m(a,b,c)=F(a)^2+F(b)^2+F(c)^2-3F(a)F(b)F(c)$$
is strictly decreasing both in $a$ and $b$ in the region $2\le a,b\le c$. Given $a,b,c$ such that $(F(a),F(b),F(c))$ is a minimal triple, Proposition \ref{prop:minimalBound} implies that $a+b\le c-1$ so $2\le a\le b\le c-1-a<c$ and, therefore
$$m_c(a,b)\ge m_c(a,c-1-a) \quad \text{and} \quad m_c(a,b)\le m_c(2,b)\le m_c(2,2)\, .$$
\end{proof}

\begin{remark}
An analogue of Lemma \ref{lemma:increasing} also holds for non-Fibonacci Markoff $m$-triples. If we define
$$\tilde{m}(a,b,c)=a^2+b^2+c^2-3abc$$
so that $m(a,b,c)=\tilde{m}(F(a),F(b),F(c))$, we can repeat the proof of the previous Lemma to show that if $a\le a'\le b$ and $b\le b'\le c$, then
$$\tilde{m}(a,b,c)\ge \tilde{m}(a',b',c)\, .$$
\end{remark}

Now, we will bound the maximal element $c'$ in a couple of triples with $m(a,b,c)=m(a',b',c')$ and $c'\le c$.

\begin{lemma}
\label{lemma:KaramataBound}
Let $m>0$. Let $A,C\ge 2$ and $t\ge 1$ be three integers. If $A\le a\le b\le c=a+b+t$ and $c\ge C,$ then
$$L_{A,t,C} \frac{1}{5} \varphi^{2c}\le m(a,b,c) \le U_{A,t,C} \frac{1}{5} \varphi^{2c}, $$
where
\begin{gather}
\label{eq:Latc}
L_{A,t,C}=1-\frac{3}{\sqrt{5}} \varphi^{-t}+\left(1-\frac{3}{\sqrt{5}}\varphi^t\right)\left(\varphi^{-2t-2A}+\varphi^{2A-2C}\right)-\left(6+\frac{3}{\sqrt{5}}\varphi^t+\frac{9}{\sqrt{5}}\right)\varphi^{-2C},\\
\label{eq:Uatc}
U_{A,t,c}=1-\frac{3}{\sqrt{5}} \varphi^{-t}+\left(1+\frac{3}{\sqrt{5}}\varphi^t\right)\left(\varphi^{-2t-2A}+\varphi^{2A-2C}\right)+9\varphi^{-2C}.
\end{gather}
\end{lemma}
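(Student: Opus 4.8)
The plan is to insert Binet's formula \eqref{eq:Binet} into $m(a,b,c)=F(a)^2+F(b)^2+F(c)^2-3F(a)F(b)F(c)$, use $c=a+b+t$ to re-express every exponent of $\varphi$ that appears in terms of $a,b,c,t$, and then factor out $\varphi^{2c}/5$, so that $5\,m(a,b,c)/\varphi^{2c}$ decomposes into a term depending only on $t$, two intermediate terms of orders $\varphi^{-2a}$ and $\varphi^{-2b}$, and an error bounded by a constant times $\varphi^{-2c}\le\varphi^{-2C}$.

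Concretely, since $\varphi\bar{\varphi}=-1$ we have $\bar{\varphi}^{n}=(-1)^{n}\varphi^{-n}$, whence $F(n)^2=\tfrac15\bigl(\varphi^{2n}+\varphi^{-2n}-2(-1)^{n}\bigr)$ and $5\sqrt5\,F(a)F(b)F(c)=(\varphi^{a}-\bar{\varphi}^{a})(\varphi^{b}-\bar{\varphi}^{b})(\varphi^{c}-\bar{\varphi}^{c})$, which expands into eight monomials. Substituting $a+b=c-t$ turns the leading monomial $\varphi^{a+b+c}$ into $\varphi^{2c-t}$ (this is what produces the term $-\tfrac{3}{\sqrt5}\varphi^{-t}$), turns the two mixed monomials $-\varphi^{a+c}\bar{\varphi}^{b}$ and $-\varphi^{b+c}\bar{\varphi}^{a}$ into signed multiples of $\varphi^{2a+t}$ and $\varphi^{2b+t}$ respectively, and leaves every remaining monomial carrying a factor $\varphi^{-2c}$ or smaller. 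Collecting these with the $\varphi^{2a}$ and $\varphi^{2b}$ coming from $F(a)^2$ and $F(b)^2$ and dividing by $\varphi^{2c}/5$, one arrives (using $2b-2c=-2a-2t$, $2b+t-2c=-2a-t$ and the symmetric identities) at a relation of the form
\[
\frac{5\,m(a,b,c)}{\varphi^{2c}}=\left(1-\frac{3}{\sqrt5}\varphi^{-t}\right)+\varphi^{-2a-2t}\Bigl(1+\tfrac{3}{\sqrt5}(-1)^{a}\varphi^{t}\Bigr)+\varphi^{-2b-2t}\Bigl(1+\tfrac{3}{\sqrt5}(-1)^{b}\varphi^{t}\Bigr)+R,
\]
where $R$ is a sum of boundedly many monomials, each of absolute value at most a fixed multiple of $\varphi^{-2c}$; the single remaining monomial of the triple product, which after normalisation is $-\tfrac{3}{\sqrt5}(-1)^{a+b}\varphi^{t-2c}$, is responsible for the term $\tfrac{3}{\sqrt5}\varphi^{t}$ in $L_{A,t,C}$. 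Since $a,b\ge A\ge2$ and $c\ge C$ (so also $c=a+b+t\ge2A+t$), summing the coefficients of the monomials in $R$ and replacing $\varphi^{-2c}$ by $\varphi^{-2C}$---using in addition $\tfrac{3}{\sqrt5}\varphi^{-1}<1$---yields $-\bigl(6+\tfrac{3}{\sqrt5}\varphi^{t}+\tfrac{9}{\sqrt5}\bigr)\varphi^{-2C}\le R$, and, discarding for the upper estimate the positive leftover monomials into the slack produced in the next step, $R\le 9\varphi^{-2C}$.

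It remains to estimate the two intermediate terms. Because $\tfrac{3}{\sqrt5}\varphi^{t}\ge\tfrac{3}{\sqrt5}\varphi>1$ for every $t\ge1$, we have $1-\tfrac{3}{\sqrt5}\varphi^{t}<0<1+\tfrac{3}{\sqrt5}\varphi^{t}$, so the worst case is $(-1)^{a}=(-1)^{b}=-1$ for the lower bound and $(-1)^{a}=(-1)^{b}=+1$ for the upper bound; in either case the sum of the two intermediate terms equals $\bigl(1\mp\tfrac{3}{\sqrt5}\varphi^{t}\bigr)h(a)$ with $h(a)=\varphi^{-2a-2t}+\varphi^{2a-2c}$. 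Now $h$ is convex in $a$ on the feasible interval $A\le a\le(c-t)/2$ (the right endpoint being forced by $a\le b$), so it attains its maximum at an endpoint: at $a=A$ the value is $\varphi^{-2A-2t}+\varphi^{2A-2c}\le\varphi^{-2t-2A}+\varphi^{2A-2C}$ since $c\ge C$, and at $a=(c-t)/2$ the value is $2\varphi^{-c-t}\le2\varphi^{-C-t}$, which with $d=C-2A-t$ equals $2\varphi^{-2t-2A}\varphi^{-d}\le\varphi^{-2t-2A}(1+\varphi^{-2d})=\varphi^{-2t-2A}+\varphi^{2A-2C}$ because $(1-\varphi^{-d})^2\ge0$. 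Hence $h(a)\le\varphi^{-2t-2A}+\varphi^{2A-2C}$; multiplying this by $1+\tfrac{3}{\sqrt5}\varphi^{t}>0$ gives the corresponding term of $U_{A,t,C}$, while multiplying it by $1-\tfrac{3}{\sqrt5}\varphi^{t}<0$ reverses the inequality and gives the term of $L_{A,t,C}$. Combining the three pieces and multiplying back through by $\varphi^{2c}/5$ gives the two claimed inequalities.

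The main obstacle is the bookkeeping: the expansion produces of the order of seventeen $\varphi$-monomials, each of which must be sorted into ``main term'' or ``error'', and one must then check that the crude estimates $\varphi^{-2c}\le\varphi^{-2C}$, $c\ge2A+t$ and $\tfrac{3}{\sqrt5}\varphi^{-1}<1$ collapse the error \emph{exactly} into the constants displayed in $L_{A,t,C}$ and $U_{A,t,C}$. The only genuinely non-mechanical step is that, once $b=c-t-a$ is imposed, neither intermediate term is monotone in $a$, so their sum has to be controlled by convexity across the endpoints of $A\le a\le(c-t)/2$ rather than term by term.
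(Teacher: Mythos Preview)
Your plan is essentially the paper's own proof: Binet's formula, substitute $b=c-t-a$, pull out $\varphi^{2c}/5$, and control $\varphi^{-2a-2t}+\varphi^{2a-2c}$ by convexity. Two comments on execution where the paper does things more efficiently than your sketch.

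First, the paper does \emph{not} carry out the exact eight-term expansion and then try to bound the signed remainder $R$. Instead it applies crude inequalities at the factor level before multiplying out: it uses $F(n)^2<\tfrac15\varphi^{2n}+\tfrac35$ and $F(n)^2>\tfrac15\varphi^{2n}-\tfrac25$ for the squares, and $\varphi^{n}-\varphi^{-n}\le\sqrt5\,F(n)\le\varphi^{n}+\varphi^{-n}$ for each of the three factors in the triple product. This removes all the $(-1)^a,(-1)^b$ signs at once and, after a short regrouping, the error collapses \emph{directly} to the displayed constants $9$ (upper) and $6+\tfrac{3}{\sqrt5}\varphi^{t}+\tfrac{9}{\sqrt5}$ (lower), with no ``slack absorption'' needed. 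Your asserted bound $R\le 9\varphi^{-2C}$ is not true term-by-term (the monomial $\tfrac{3}{\sqrt5}\varphi^{t}\varphi^{-2c}$ alone can exceed it), and your one-line appeal to slack in the next step would need a genuine case analysis on the parities of $a$ and $b$ to be made rigorous; bounding at the factor level sidesteps all of that.

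Second, the check of the endpoint $a=(c-t)/2$ is unnecessary. For $f$ convex the map $a\mapsto f(a)+f(s-a)$ is nonincreasing on $[A,s/2]$ (its derivative is $f'(a)-f'(s-a)\le0$ there), so the maximum of your $h(a)$ on $A\le a\le(c-t)/2$ is attained at $a=A$; this is exactly the Karamata step the paper invokes, and it makes the AM--GM-style inequality $2\varphi^{-d}\le 1+\varphi^{-2d}$ redundant.
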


\begin{proof}
Using Binet's formula (\ref{eq:Binet}) and taking into account that $\varphi\bar{\varphi}=-1$, we deduce that
$$F(n)^2=\frac{1}{5}\varphi^{2n}+\frac{1}{5}\varphi^{-2n}-\frac{2}{5}(-1)^n<\frac{1}{5}\varphi^{2n}+\frac{3}{5}.$$
Thus
\begin{multline*}
m(a,b,c)=F(c)^2+F(c-t-a)^2+F(a)^2-3F(c)F(c-t-a)F(a)\\
<\frac{1}{5}\varphi^{2c}+\frac{1}{5}\varphi^{2c-2t-2a}+\frac{1}{5}\varphi^{2a}+\frac{9}{5}-\frac{3}{5\sqrt{5}} (\varphi^{c}-\bar{\varphi}^{c})(\varphi^{c-t-a}-\bar{\varphi}^{c-t-a})(\varphi^{a}-\bar{\varphi}^{a})\, .
\end{multline*}
As $c>t$ and $\varphi\bar{\varphi}=-1$, it follows that
\begin{multline*}
    (\varphi^{c}-\bar{\varphi}^{c})(\varphi^{c-t-a}-\bar{\varphi}^{c-t-a})(\varphi^{a}-\bar{\varphi}^{a})\ge
    (\varphi^{c}-\varphi^{-c})(\varphi^{c-t-a}-\varphi^{a-c+t})(\varphi^{a}-\varphi^{-a})\\
    =\varphi^{2c-t}-\varphi^{2c-t-2a}-\varphi^{2a+t}+\varphi^t-\varphi^{-t}+\varphi^{-2a-t}+\varphi^{2a-2c+t}-\varphi^{-2c+t}\\
    > \varphi^{2c-t}-\varphi^{2c-t-2a}-\varphi^{2a+t} +1-\varphi^{-2c+t}> \varphi^{2c-t}-\varphi^{2c-t-2a}-\varphi^{2a+t}\,.
\end{multline*}
Therefore,
\begin{multline*}
    m(a,b,c)< \frac{1}{5}\left(1-\frac{3}{\sqrt{5}}\varphi^{-t}\right)\varphi^{2c}+\frac{1}{5}\varphi^{2c-2t-2a}+\frac{1}{5}\varphi^{2a'}+\frac{9}{5}-\frac{3}{5\sqrt{5}}(\varphi^{2c-t}-\varphi^{2c-t-2a}-\varphi^{2a+t})
    \\=\frac{1}{5}\left(1-\frac{3}{\sqrt{5}}\varphi^{-t}\right)\varphi^{2c}+\frac{1}{5}\left(1+\frac{3}{\sqrt{5}}\varphi^t\right) ( \varphi^{2c-2t-2a}+\varphi^{2a})+\frac{9}{5}\, .
\end{multline*}
As $\varphi^x$ is a convex function and $a\ge A$, by applying Karamata's inequality \cite{K}, we get
\begin{equation}
\label{eq:karamata}
\varphi^{2c-2a}+\varphi^{2a}\le \varphi^{2c-2A}+\varphi^{2A}\, .
\end{equation}
Since the factor $1+\frac{3}{\sqrt{5}}\varphi$ is positive and $c\ge C$, then
\begin{multline*}
    m(a,b,c)< \frac{1}{5}\left(1-\frac{3}{\sqrt{5}}\varphi^{-t}\right)\varphi^{2c}+\frac{1}{5}\left(1+\frac{3}{\sqrt{5}}\varphi^t\right) ( \varphi^{2c-2t-2A}+\varphi^{2A})+\frac{9}{5}\\
    = \left(1-\frac{3}{\sqrt{5}}\varphi^{-t}+\left(1+\frac{3}{\sqrt{5}}\varphi^t\right) ( \varphi^{-2t-2A}+\varphi^{2A-2c})+9\varphi^{-2c}\right)\frac{1}{5}\varphi^{2c}\le U_{A,t,C} \frac{1}{5} \varphi^{2c}\, .
\end{multline*}
Analogously,
$$F(n)^2=\frac{1}{5}\varphi^{2n}+\frac{1}{5}\varphi^{-2n}-\frac{2}{5}(-1)^n>\frac{1}{5}\varphi^{2n}-\frac{2}{5}$$
and, since $c>t$
\begin{multline*}
    (\varphi^{c}-\bar{\varphi}^{c})(\varphi^{c-t-a}-\bar{\varphi}^{c-t-a})(\varphi^{a}-\bar{\varphi}^{a})\le
    (\varphi^{c}+\varphi^{-c})(\varphi^{c-t-a}+\varphi^{a-c+t})(\varphi^{a}+\varphi^{-a})=\\
    \varphi^{2c-t}+\varphi^{2c-t-2a}+\varphi^{2a+t}+\varphi^t+\varphi^{-t}+\varphi^{-2a-t}+\varphi^{2a-2c+t}+\varphi^{-2c+t}
    <\varphi^{2c-t}+\varphi^{2c-t-2a}+\varphi^{2a+t} +\varphi^t+3
\end{multline*}
so
\begin{multline*}
m(a,b,c)> \frac{1}{5}\left(1-\frac{3}{\sqrt{5}}\varphi^{-t}\right)\varphi^{2c}+\frac{1}{5}\varphi^{2c-2t-2a}+\frac{1}{5}\varphi^{2a'}-\frac{6}{5}-\frac{3}{5\sqrt{5}}(\varphi^{2c-t}+\varphi^{2c-t-2a}+\varphi^{2a+t}+\varphi^t+3)
    \\=\frac{1}{5}\left(1-\frac{3}{\sqrt{5}}\varphi^{-t}\right)\varphi^{2c}+\frac{1}{5}\left(1-\frac{3}{\sqrt{5}}\varphi^t\right) ( \varphi^{2c-2t-2a}+\varphi^{2a})-\frac{6}{5}-\frac{3}{5\sqrt{5}} \varphi^t - \frac{9}{5\sqrt{5}}\, .
\end{multline*}
Using again Karamata's inequality \eqref{eq:karamata}, and taking into account that the factor $1-\frac{3}{\sqrt{5}}\varphi$ is negative and that $c\ge C$, we obtain
\begin{multline*}
    m(a,b,c)>\frac{1}{5}\left(1-\frac{3}{\sqrt{5}}\varphi^{-t}\right)\varphi^{2c}+\frac{1}{5}\left(1-\frac{3}{\sqrt{5}}\varphi\right)( \varphi^{2c-2t-2A}+\varphi^{2A})-\frac{6}{5}-\frac{3}{5\sqrt{5}}\varphi^t-\frac{9}{5\sqrt{5}}\ge L_{A,t,C}\frac{1}{5}\varphi^{2c}\, .
\end{multline*}
\end{proof}

\begin{lemma}
\label{lemma:c'c-1}
Let $m>0$. Let $(F(a),F(b),F(c))$ and $(F(a'),F(b'), F(c'))$ be two ordered minimal Markoff-Fibonacci $m$-triples with $a \le b \le c$, $a'\le b'\le c'$ and $c\ge c'$. If $a\ge 4$ and $c\ge 9$ then either $c'=c$ or $c'=c-1$.
\end{lemma}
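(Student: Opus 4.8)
The plan is to bracket $m(a,b,c)$ from below and $m(a',b',c')$ from above by explicit multiples of powers of $\varphi$, and then to exploit the equality $m(a,b,c)=m(a',b',c')$ to force $\varphi^{2(c-c')}$ to be small. First I would record the consequences of minimality. Since $c\ge 9$ and $(F(a),F(b),F(c))$ is minimal, Proposition \ref{prop:minimalBound} gives $c=a+b+t$ with $t:=c-a-b\ge 1$; as $a\ge 4$ and $b\ge a$ we have $a+b\ge 8$, so $1\le t\le c-8$. For the second triple, minimality gives either $c'=a'+b'+t'$ with $t'\ge 1$ (so $c'\ge 5$) or $(a',b',c')=(2,2,4)$; in every case $c'\ge 4$.

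For the upper bound I would apply Lemma \ref{lemma:increasing} with the pair $(2,2)$ in place of $(a,b)$ (trivially if $(a',b',c')=(2,2,4)$), giving $m(a',b',c')\le m(2,2,c')=F(c')^2-3F(c')+2$; inserting Binet's formula in the form $F(c')^2\le \tfrac15\varphi^{2c'}+\tfrac15\varphi^{-2c'}+\tfrac25$ and using $3F(c')\ge 9>\tfrac{12}{5}+\tfrac15\varphi^{-2c'}$ for $c'\ge 4$ yields $m(a',b',c')<\tfrac15\varphi^{2c'}$. For the lower bound I would invoke Lemma \ref{lemma:KaramataBound} with $A=4$, $C=c$ and the above $t$ (the hypotheses $4\le a\le b\le c=a+b+t$, $c\ge C$, $t\ge 1$ all hold), obtaining $m(a,b,c)\ge L_{4,t,c}\,\tfrac15\varphi^{2c}$. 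Combining the two estimates with $m(a,b,c)=m(a',b',c')$ gives $\varphi^{2(c-c')}<L_{4,t,c}^{-1}$, so the whole lemma reduces to proving
$$L_{4,t,c}>\varphi^{-4}\qquad\text{for all integers }c\ge 9,\ 1\le t\le c-8,$$
since this forces $\varphi^{2(c-c')}<\varphi^{4}$, i.e. $c-c'\le 1$, and hence $c'\in\{c-1,c\}$.

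To establish $L_{4,t,c}>\varphi^{-4}$ I would split on $t$. If $t\ge 2$ (hence $c\ge 10$), I bound the two negative correction terms of $L_{4,t,c}$ crudely: $(\tfrac3{\sqrt5}\varphi^{t}-1)(\varphi^{-2t-8}+\varphi^{8-2c})<\tfrac3{\sqrt5}(\varphi^{-t-8}+\varphi^{t+8-2c})\le \tfrac6{\sqrt5}\varphi^{-10}$ (using $t\ge 2$ and $t\le c-8$), and $(6+\tfrac3{\sqrt5}\varphi^{t}+\tfrac9{\sqrt5})\varphi^{-2c}<\tfrac3{\sqrt5}\varphi^{-18}+(6+\tfrac9{\sqrt5})\varphi^{-20}$; together with $1-\tfrac3{\sqrt5}\varphi^{-t}\ge 1-\tfrac3{\sqrt5}\varphi^{-2}=\tfrac{25-9\sqrt5}{10}>0.48$ this gives $L_{4,t,c}>0.46>\varphi^{-4}$. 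If $t=1$, the only dependence of $L_{4,1,c}$ on $c$ is through $(1-\tfrac3{\sqrt5}\varphi)\varphi^{8-2c}$ and $-(6+\tfrac3{\sqrt5}\varphi+\tfrac9{\sqrt5})\varphi^{-2c}$, whose coefficients are negative, so $L_{4,1,c}$ is strictly increasing in $c$ and it suffices to check $L_{4,1,9}>\varphi^{-4}$; a direct estimate with rational bounds (e.g. $\varphi^{-10}<0.00814$, $\varphi^{-18}<1.74\cdot 10^{-4}$, $\tfrac3{\sqrt5}\varphi<2.1709$, $\tfrac3{\sqrt5}\varphi^{-1}<0.8292$) gives $L_{4,1,9}>0.1496>0.1459>\varphi^{-4}$.

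The main obstacle is precisely this last case $t=1$, $c=9$, which corresponds to the single triple $(F(4),F(4),F(9))=(3,3,34)$: there the lower bound $L_{4,1,9}$ for $m(a,b,c)\big/\big(\tfrac15\varphi^{18}\big)$ only exceeds $\varphi^{-4}$ by about $0.004$. This is where the hypotheses $a\ge 4$ and $c\ge 9$ enter in an essential way, and the rational estimates have to be carried out with enough precision to keep this margin strictly positive; for smaller $a$ or $c$ the analytic bound alone would not suffice, which is why those ranges are treated separately.
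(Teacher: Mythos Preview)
Your proof is correct and follows the same strategy as the paper: bound $m(a',b',c')\le m(2,2,c')<\tfrac15\varphi^{2c'}$ above via Lemma~\ref{lemma:increasing}, bound $m(a,b,c)$ below via Lemma~\ref{lemma:KaramataBound}, and then compare. The one simplification the paper makes is to apply Lemma~\ref{lemma:increasing} \emph{first} to replace $b$ by $c-1-a$ (i.e.\ force $t=1$) before invoking Lemma~\ref{lemma:KaramataBound}, so that only the single constant $L_{4,1,9}$ must be checked rather than $L_{4,t,c}$ for all admissible $t\ge 1$; this spares your case split on $t$, but the decisive numerical estimate $L_{4,1,9}>\varphi^{-4}$ is identical.
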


\begin{proof}
Assume that $m(a,b,c)=m=m(a',b',c')$. From Lemma \eqref{lemma:increasing} and Lemma \ref{lemma:KaramataBound} we know that, if $L_{A,t,C}$ is the lower bound given by \eqref{eq:Latc}, then
$$m=m(a,b,c)\ge m(a,c-1-a,c)\ge L_{4,1,9} \frac{1}{5}\varphi^{2c}$$
A direct computation shows that
$$L_{4,1,9}=1-\frac{3}{\sqrt{5}}\varphi^{-1}+\left(1-\frac{3}{\sqrt{5}}\varphi\right)2\varphi^{-10}-6-\frac{3}{\sqrt{5}}\varphi^{-8}-9\varphi^{-9}>1.025 \varphi^{-4}>\varphi^{-4}$$
so
\begin{equation}
\label{eq:minimal1}
m(a,b,c)>L_{4,1,9}\frac{1}{5}\varphi^{2c}>\frac{1}{5}\varphi^{2c-4}=\frac{1}{5}\varphi^{2(c-2)}\, .
\end{equation}
On the other hand, by Lemma \eqref{lemma:increasing}, it follows
\begin{equation}
\label{eq:minimal2}
m=m(a',b',c')\le m(2,2,c')=F(c')^2-3F(c')+2 < \frac{1}{5}\varphi^{2c'}+\frac{1}{5}\bar{\varphi}^{2c'}+\frac{2}{5}(-1)^{c'}-1< \frac{1}{5} \varphi^{2c'}\, .
\end{equation}
Using equations \eqref{eq:minimal1} and \eqref{eq:minimal2} together yields $\varphi^{2(c-2)} < 5m < \varphi^{2c'}$. Thus, $c'>c-2$. As we assumed $c'\le c$, then either $c'=c$ or $c'=c-1$.
\end{proof}

\begin{lemma}\label{lemma3.5} Let $(F(a), F(b), F(c))$ and $(F(a'), F(b'), F(c'))$ be two minimal  Markoff-Fibonacci $m$-triples   with: $a'\leq b'\leq c'\leq c$, $a\leq b\leq c$, $a=2$ or $3$ and $c\ge 10$. Then,  $c'\geq c-1$ except for the case $a=3$ and $c=b+4$, where $c'\geq c-2.$
 \end{lemma}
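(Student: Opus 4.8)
The plan is to follow the scheme of Lemma~\ref{lemma:c'c-1}: squeeze $m$ between an upper bound obtained from the second triple and a lower bound obtained from the first, and then compare exponents. For the upper bound, Lemma~\ref{lemma:increasing} applied to $(F(a'),F(b'),F(c'))$ gives, exactly as in \eqref{eq:minimal2},
\[
  m = m(a',b',c') \le m(2,2,c') = F(c')^2-3F(c')+2 < \tfrac15\varphi^{2c'},
\]
so everything reduces to a good lower bound for $m=m(a,b,c)$. I would not use Lemma~\ref{lemma:KaramataBound} for this: when $a\in\{2,3\}$ it is too wasteful, since it bounds $F(a)=\frac{\varphi^a-\bar\varphi^a}{\sqrt5}$ by $\frac{\varphi^a+\varphi^{-a}}{\sqrt5}$ and so loses a fixed multiplicative factor (about $3.3$ when $a=2$, which is too much here). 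Instead I would evaluate $m$ \emph{exactly} on the relevant extremal minimal triple. By Proposition~\ref{prop:minimalBound} (and $c\ge10$), minimality of $(F(a),F(b),F(c))$ forces $b\le c-a-1$, i.e.\ $b\le c-3$ if $a=2$ and $b\le c-4$ if $a=3$; since $m(a,b,c)$ is strictly decreasing in $b$ (Lemma~\ref{lemma:increasing}), it is enough to bound below $m(2,c-3,c)$ when $a=2$, and, when $a=3$, to treat separately $b\le c-5$ (bounding below $m(3,c-5,c)$) and $b=c-4$ (where $m(a,b,c)=m(3,c-4,c)$ exactly).

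The computational core is three exact evaluations. Using Binet's formula \eqref{eq:Binet}, $F(n)^2=\tfrac15(\varphi^{2n}+\varphi^{-2n}-2(-1)^n)$, the identity $5F(c-k)F(c)=\varphi^{2c-k}+(-1)^k(\varphi^{k-2c}-(-1)^cL(k))$ with $L(3)=4$, $L(4)=7$, $L(5)=11$, and the elementary relations
\[
  \varphi^6+1-3\varphi^3=(4-\varphi)\varphi^2,\qquad
  \varphi^{10}+1-6\varphi^5=3\varphi^6+\varphi+2,\qquad
  \varphi^8+1-6\varphi^4=\varphi^4,
\]
one obtains
\begin{align*}
  m(2,c-3,c)&=\tfrac{4-\varphi}{5}\,\varphi^{2(c-2)}+1-\tfrac{12}{5}(-1)^c+\tfrac{14\varphi+9}{5}\,\varphi^{-2c},\\
  m(3,c-5,c)&=\tfrac{25\varphi+17}{5}\,\varphi^{2c-10}+4-\tfrac{66}{5}(-1)^c+\tfrac{85\varphi+53}{5}\,\varphi^{-2c},\\
  m(3,c-4,c)&=\tfrac15\,\varphi^{2(c-2)}+4+\tfrac{38}{5}(-1)^c+\tfrac15\,\varphi^{4-2c}.
\end{align*}
Discarding the positive $\varphi^{-2c}$-tails and using $4-\varphi>2$ together with $25\varphi+17>3\varphi^6$ (so that $\tfrac{25\varphi+17}{5}\varphi^{2c-10}>\tfrac35\varphi^{2(c-2)}$), the first two lines give $m(2,c-3,c)>\tfrac25\varphi^{2(c-2)}-\tfrac75$ and $m(3,c-5,c)>\tfrac35\varphi^{2(c-2)}-\tfrac{46}{5}$, each of which exceeds $\tfrac15\varphi^{2(c-2)}$ once $c\ge10$, since then $\varphi^{2(c-2)}\ge\varphi^{16}$ dominates the additive constants. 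The third line gives $m(3,c-4,c)\ge\tfrac15\varphi^{2(c-2)}-\tfrac{18}{5}$, which exceeds $\tfrac15\varphi^{2(c-3)}$ because $\tfrac15\bigl(\varphi^{2(c-2)}-\varphi^{2(c-3)}\bigr)=\tfrac15\varphi^{2c-5}\ge\tfrac15\varphi^{15}>\tfrac{18}{5}$ for $c\ge10$.

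It then remains to combine the two sides. If $a=2$, or if $a=3$ and $b\le c-5$, then $\tfrac15\varphi^{2(c-2)}<m<\tfrac15\varphi^{2c'}$, hence $c-2<c'$, and since $c'$ is an integer with $c'\le c$ we conclude $c'\ge c-1$. If $a=3$ and $b=c-4$, then $\tfrac15\varphi^{2(c-3)}<m<\tfrac15\varphi^{2c'}$, hence $c-3<c'$, i.e.\ $c'\ge c-2$. As minimality leaves exactly the possibilities $b=c-4$ and $b\le c-5$ when $a=3$, this is the statement. The main difficulty, and the reason the exceptional case genuinely cannot be improved, is the perfect cancellation $\varphi^8+1-6\varphi^4=\varphi^4$: it places $m(3,c-4,c)$ exactly at $\tfrac15\varphi^{2(c-2)}$ up to a bounded perturbation that is negative for odd $c$, so no lower bound better than $\tfrac15\varphi^{2(c-3)}$ is available — and this is precisely the point at which the generic estimate of Lemma~\ref{lemma:KaramataBound} breaks down.
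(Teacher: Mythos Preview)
Your proof is correct and takes a genuinely different route from the paper's. The paper argues by contradiction, assuming $c'\le c-2$ (or $c'\le c-3$ in the exceptional subcase), dividing the identity $m(a,b,c)=m(a',b',c')$ by $F(c)^2$, and then bounding each resulting ratio $F(c-j)/F(c)$ numerically via Tables~\ref{table:1}--\ref{table:2}; this forces a separate case split over the possible values of $c-b$ for each of $a=2$ and $a=3$, together with an ad~hoc elimination of the range $c'\le 4$. You instead keep the scheme of Lemma~\ref{lemma:c'c-1}: the universal upper bound $m<\tfrac{1}{5}\varphi^{2c'}$ from~\eqref{eq:minimal2}, combined with a sharp lower bound for $m$ obtained by evaluating $m(a,b,c)$ \emph{exactly} at the extremal admissible $b$ via Binet's formula. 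Your three closed-form identities for $m(2,c-3,c)$, $m(3,c-5,c)$ and $m(3,c-4,c)$ check out (I verified the coefficients $4-\varphi=2\varphi+3/\varphi^2$, $25\varphi+17$, and the key cancellation $\varphi^8+1-6\varphi^4=\varphi^4$), and the ensuing comparison of exponents is clean. The payoff of your approach is twofold: it avoids the dependence on the numerical tables, and it makes transparent \emph{why} the case $a=3$, $b=c-4$ is exceptional --- the identity $\varphi^8+1-6\varphi^4=\varphi^4$ pins the leading term of $m(3,c-4,c)$ exactly at $\tfrac{1}{5}\varphi^{2(c-2)}$, with a bounded correction that is negative for odd $c$, so no bound stronger than $\tfrac{1}{5}\varphi^{2(c-3)}$ is available. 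One cosmetic point: your appeal to Lemma~\ref{lemma:increasing} for the bound $m(a',b',c')\le m(2,2,c')$ formally requires $c'\ge 5$; the residual possibility $c'=4$ forces $(a',b',c')=(2,2,4)$ by Proposition~\ref{prop:minimalBound}, in which case the inequality is trivially an equality, so nothing is lost.
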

 \begin{proof} The Markoff-Fibonacci $m$-triples satisfy 
 \begin{equation}\label{eq:(2.1)}
 m(a,b,c)=m(a',b',c').
 \end{equation} 

\noindent Dividing \eqref{eq:(2.1)} by $F(c)^2$, and arranging we obtain
\begin{multline}\label{eq:(2.2)} \frac{m(a,b,c)}{F(c)^2}-
\frac{m(a',b',c')}{F(c)^2}= 1+\frac{F(a)^2}{F(c)^2}+\frac{F(b)^2}{F(c)^2}-3\frac{F(a) F(b)} {F(c)}\\
-\left(\frac{F(c')^2}{F(c)^2}+\frac{F(a')^2}{F(c)^2}+\frac{F(b')^2}{F(c)^2}-3\frac{F(a') F(b') F(c')}{F(c)^2}\right)=0.
\end{multline} 
We also have the inequality $F(n+1)>  \sqrt{2}F(n)$, for $n\ge 2$ (see, for instance,  Remark \ref{rmk:FibonacciQuotientBounds}). First, we assume that $a=2$ and $c\geq c'\geq 5$. Then, we obtain the upper bounds $a'\leq b'\leq c'-3$ and $b\leq c-3$  by the minimality of the triples (Proposition \ref{prop:minimalBound}).

If  $c'\leq c-2$, then $a'\leq b'\leq c-5,$ which implies that $F(c)> 2 F(c')$, $F(c)> 4 \sqrt{2} F(b')\geq 4 \sqrt{2} F(a')$. As a result, $F(c)> 4 \sqrt{2} F(b)$ if we first assume $c\geq b+5$, obtaining the following lower bound for the left side of \eqref{eq:(2.2)}:
 \begin{equation}\label{eq:(2.3)}\frac{m(a,b,c)}{F(c)^2}-\frac{m(a',b',c')}{F(c)^2}>1-\frac{3}{4\sqrt{2}}-\left(\frac{1}{4}+\frac{1}{32}+\frac{1}{32}\right)>0,
 \end{equation}
 contradicting equation \eqref{eq:(2.2)}.
 Therefore, $c=b+3$, with $b\geq 2$ or $c=b+4$.  If $c=b+3$, we obtain the following lower bound for the left side of \eqref{eq:(2.2)}: 
\begin{multline*}\frac{m(a,b,c)}{F(c)^2}-\frac{m(a',b',c')}{F(c)^2} > 1+\frac{1}{F(c)^2}+\frac{F(c-3)^2}{F(c)^2}-3\frac{F(c-3)}{F(c)}-\left(\frac{1}{4}+\frac{1}{32}+\frac{1}{32}\right)\\
>1+0.2360^2-3(0.2361)-1/4-1/16>0,
\end{multline*}
for $c\geq 10$ (using Tables \ref{table:1} and \ref{table:2}), which again contradicts \eqref{eq:(2.2)}, so the case $c=b+3$ is discarded.

We will apply this technique in the following cases omitting the details for the sake of clarity in the exposition.

If $c=b+4$, then the lower bound of the left side of \eqref{eq:(2.2)} is
$$\frac{m(a,b,c)}{F(c)^2}-\frac{m(a',b',c')}{F(c)^2}>1+\frac{1}{F(c)^2}+\frac{F(c-4)^2}{F(c)^2}-3\frac{F(c-4)}{F(c)}-\frac{5}{16}\geq 0.16>0$$ for $c\geq 5$, contradicting \eqref{eq:(2.2)}, so the case $c=b+4$ is not possible.

The case $c'\leq 4$ is discarded in the following way: if $c'\leq 4$, then $F(c')\leq F(4)=3$. Since $(F(a'), F(b'), F(c'))$ is minimal, we must have $F(a')=F(b')=1$  and $F(c')=3$. But, in this case, $m=2$ and, for this value of $m$, there are not two minimal Markoff-Fibonacci $m$-triples, a contradiction. As a result, $c'\geq c-1$ in the case $a=2$, as desired.

Now, consider the case $c'\leq c-2$ with $a=3$. Without loss of generality, we can assume that $c\geq 7$, $c'\geq 5,$ as in the previous case. On the other hand, since $(2, F(b), F(c))$ is minimal, then $b\leq c-4$ by Proposition \ref{prop:minimalBound}. 

If $c\geq b+5$, we obtain the following lower bound for the left side of \eqref{eq:(2.2)} 

$\frac{m(a,b,c)}{F(c)^2}-\frac{m(a',b',c')}{F(c)^2}>1-6\frac{F(c-5)}{F(c)}-\frac{5}{16}>0.1>0$ for $c\geq 8$, which again contradicts \eqref{eq:(2.2)}.
This implies that $c'\geq c-1$ for $c\geq b+5$. 
But if $c=b+4\geq 7$ and $c'\leq c-3$, then $a'\leq b' \leq c-6$. As a result, we obtain a lower bound for the left side of \eqref{eq:(2.2)}
$$\frac{m(a,b,c)}{F(c)^2}-\frac{m(a',b',c')}{F(c)^2}\geq 1+\frac{4}{F(c)^2}+\frac{F(c-4)^2}{F(c)^2}-6\frac{F(c-4)}{F(c)}-\left(\frac{F(c-3)^2}{F(c)^2}+2\frac{F(c-6)^2}{F(c)^2}\right)\geq 0.05>0$$ for $c\geq 7$, obtaining a contradiction. Thus $c'\geq c-2$ in this case, concluding the lemma. 

\end{proof}

\begin{lemma}
\label{lemma:c'c-1part2}
Let $(F(a), F(b), F(c))$ and $(F(a'),F(b'),F(c'))$ be two ordered minimal Markoff $m$-triples with $m>0$ such that $c\geq c'$. If $a=2$ or $a=3$ and $c\ge 10$ then $c'=c$ or $c'=c-1$.
\end{lemma}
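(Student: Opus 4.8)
The plan is to reduce Lemma \ref{lemma:c'c-1part2} to the combination of Lemma \ref{lemma:c'c-1} (which already covers the case $a\ge 4$, $c\ge 9$) and Lemma \ref{lemma3.5} (which handles $a=2$ or $a=3$, $c\ge 10$, but leaves open the exceptional configuration $a=3$, $c=b+4$, where only $c'\ge c-2$ was obtained). So the entire content of this lemma is to rule out the possibility $c'=c-2$ in that single remaining exceptional case: $a=3$, $c=b+4$, $c\ge 10$, $c'=c-2$. First I would invoke Lemma \ref{lemma3.5} to fix attention on precisely this configuration, noting that in all other subcases the conclusion $c'\in\{c,c-1\}$ is immediate.

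Next, in the configuration $a=3$, $b=c-4$, $c'=c-2$, I would use the minimality of the second triple (Proposition \ref{prop:minimalBound}) to get $a'\le b'\le c'-3=c-5$, and then plug these constraints into the equation $m(a,b,c)=m(a',b',c')$ in the form \eqref{eq:(2.2)}, i.e. after dividing by $F(c)^2$. The dominant terms are $1$ (from the leading $F(c)^2/F(c)^2$) against the cross term $-3F(a')F(b')F(c')/F(c)^2$; since $F(c')=F(c-2)$ and $F(c)\ge \varphi^2 F(c-2)$ up to lower-order corrections, this cross term is of size at most roughly $3F(a')F(b')/\varphi^2\cdot(1/F(c-2))$, which is small when $a',b'$ are bounded but $c$ is large; and when $b'$ is itself close to $c-5$ it is controlled by the bound $F(c-5)/F(c)<\varphi^{-5}$ together with Vajda-type estimates. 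The point is that all the negative contributions to the right-hand side of \eqref{eq:(2.2)} are controlled by explicit small constants (via Tables \ref{table:1} and \ref{table:2}, exactly as in the proof of Lemma \ref{lemma3.5}), while the constant term stays near $1$; thus the right-hand side is strictly positive for $c\ge 10$, contradicting \eqref{eq:(2.2)}. Concretely I expect an estimate of the shape
$$\frac{m(a,b,c)}{F(c)^2}-\frac{m(a',b',c')}{F(c)^2}\ge 1+\frac{F(c-4)^2}{F(c)^2}-6\frac{F(c-4)}{F(c)}-\left(\frac{F(c-2)^2}{F(c)^2}+2\frac{F(c-5)^2}{F(c)^2}\right)>0$$
for all $c\ge 10$, using that $F(c-2)/F(c)<\varphi^{-2}+\varepsilon$ and $F(c-4)/F(c),\,F(c-5)/F(c)$ are correspondingly smaller; one also needs the subcase split $b'\le c-6$ versus $b'=c-5$ handled separately, since only in the latter is $F(b')/F(c)$ not negligible, but there the extra decay from $F(c-5)/F(c)<\varphi^{-5}$ suffices.

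The main obstacle, as in Lemma \ref{lemma3.5}, is bookkeeping: the inequality must be checked uniformly for $c\ge 10$, so one cannot simply evaluate at a single value of $c$ — one needs the monotone quotient bounds from Lemma \ref{lemma:boundFibonacciQuotient} and Remark \ref{rmk:FibonacciQuotientBounds} to pin down $F(c-k)/F(c)$ from above (and below where a positive term like $F(c-4)^2/F(c)^2$ appears) by numerical constants valid for all $c\ge 10$. Once those numerical bounds are in hand the positivity is a finite arithmetic check. I would therefore present the proof as: (i) reduce to $a=3$, $c=b+4$, $c'=c-2$ via Lemmas \ref{lemma:c'c-1} and \ref{lemma3.5}; (ii) apply minimality to bound $a',b'$; (iii) estimate \eqref{eq:(2.2)} using Tables \ref{table:1} and \ref{table:2} to reach a contradiction, splitting on whether $b'=c-5$ or $b'\le c-6$; hence $c'\ge c-1$, and combined with $c'\le c$ this gives $c'=c$ or $c'=c-1$.
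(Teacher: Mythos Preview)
Your reduction in step (i) is correct: Lemma \ref{lemma3.5} leaves open only the configuration $a=3$, $b=c-4$, $c'=c-2$. There is, however, a genuine gap in step (iii). Your displayed lower bound
\[
1+\frac{F(c-4)^2}{F(c)^2}-6\frac{F(c-4)}{F(c)}-\frac{F(c-2)^2}{F(c)^2}-2\frac{F(c-5)^2}{F(c)^2}
\]
is \emph{negative} for every $c\ge 10$; its limit as $c\to\infty$ equals $1-7\varphi^{-4}+\varphi^{-8}-2\varphi^{-10}=-2\varphi^{-10}\approx -0.016$, thanks to the identity $\varphi^{8}-7\varphi^{4}+1=0$. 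The obstruction is intrinsic rather than an artefact of dropping the cross term: even replacing your crude upper bound on $m(a',b',c-2)$ by the sharpest one available, namely $m(a',b',c-2)\le m(2,2,c-2)$ from Lemma \ref{lemma:increasing}, one computes exactly
\[
m(3,c-4,c)-m(2,2,c-2)=2+8(-1)^{c}+6F(c-4)+3F(c-5),
\]
which is positive but only of order $F(c)$, so that the normalised difference tends to $0$. Consequently no quotient-table estimate with fixed precision (as in Tables \ref{table:1}--\ref{table:2}) can separate the two sides of \eqref{eq:(2.2)} uniformly in $c$, and the split on $b'=c-5$ versus $b'\le c-6$ does not help, since the obstruction already appears at the extremal pair $(a',b')=(2,2)$.

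The paper therefore abandons the analytic method for this case and argues algebraically: substituting $F(b+4)=5F(b)+3F(b-1)$ and $F(b+2)=2F(b)+F(b-1)$ into $m(3,b,b+4)=m(a',b',b+2)$ and applying Vajda's identity with $n=b-2$, $i=2$, $j=1$ collapses the equation to $3F(a')F(b')F(b+2)=F(a')^2+F(b')^2+4$, which is then ruled out by the elementary size comparison $3F(b+2)\le 2F(b')+4/F(b')<2F(b+2)+4$.
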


\begin{proof}
According to Lemma \ref{lemma3.5}, we can assume without loss of generality that $c=b+4$ and $c'\geq c-2$. 
Let us consider the case $c'=c-2$. The two $m$-triples are $(F(a), F(b), F(b+4))$ and $(F(a'), F(b'), F(b+2))$. The equation relating both is 
\begin{equation}
\label{eq:0.1}
F(a)^2+F(b)^2+F(b+4)^2+3F(a')F(b')F(b+2)=F(a')^2+F(b')^2+F(b+2)^2+3F(a)F(b)F(b+4).
\end{equation}
In the case $F(a)=1,$ the above equation reduces to
$$1+F(b)^2+F(b+4)^2-F(b+2)^2+3F(a')F(b')F(b+2)=F(a')^2+F(b')^2+3F(b)F(b+4)$$
or
\begin{equation}
\label{eq:0.2}
1+F(b)^2+F(b+3)\left(F(b+4)+F(b+2)\right)+3F(a')F(b')F(b+2)=F(a')^2+F(b')^2+3F(b)F(b+4).
\end{equation}
Note that for any ordered minimal $m$-triple  $(x,y,z)$, it follows from \cite[Lema 2.2]{SC} that  $x^2+y^2\le m<z^2$ and hence
\begin{equation}
\label{eq:0.2b}
F(a')^2+F(b')^2-3F(a')F(b')F(b+2)=m-F(b+2)^2<0.   
\end{equation}
Moreover, $3F(b)F(b+4)<F(b+3)F(b+4)$ (see Remark \ref{rmk:FibonacciQuotientBounds}) and thus
equation \eqref{eq:0.2} does not hold and hence neither does \eqref{eq:0.1}.

Next, using $F(b+4)=5F(b)+3F(b-1)$ and $F(b+2)=2F(b)+F(b-1)$ in \eqref{eq:0.1}, we get 
$$F(a)^2+F(b)^2+25F(b)^2+9F(b-1)^2+30F(b)F(b-1)+3F(a')F(b')F(b+2)=$$
$$F(a')^2+F(b')^2+
4F(b)^2+F(b-1)^2+4F(b)F(b-1)+15F(a)F(b)^2+9F(a)F(b)F(b-1)$$
which in simplification gives
\begin{multline}
\label{eq:0.3}
  F(a)^2+8F(b-1)^2+3F(a')F(b')F(b+2)\\=F(a')^2+F(b')^2+F(b)^2(15F(a)-22)+F(b)F(b-1)(9F(a)-26).  
\end{multline}
In the case $F(a)=2,$ equation \eqref{eq:0.3} reduces to 
$$4+8F(b-1)^2+3F(a')F(b')F(b+2)=F(a')^2+F(b')^2+8F(b)^2-8F(b)F(b-1)$$
giving
\begin{multline*}4+8F(b)F(b-1)+3F(a')F(b')F(b+2)=F(a')^2+F(b')^2+8(F(b)^2-F(b-1)^2)\\
=F(a')^2+F(b')^2+8F(b+1)F(b-2),
\end{multline*}
consequently,
$$4+3F(a')F(b')F(b+2)=F(a')^2+F(b')^2+8(F(b+1)F(b-2)-F(b)F(b-1))=F(a')^2+F(b')^2\pm 8, $$
where we applied Vajda's identity with $n=b-2$, $i=2$ and $j=1$ (see Remark \ref{rmk:Vadja}).
Considering that, by equation \eqref{eq:0.2b}, $F(a')^2+F(b')^2<3F(a')F(b')F(b+2)$, the above reduces to 
$$3F(a')F(b')F(b+2)=F(a')^2+F(b')^2+4,$$
which is not possible as
$$3F(b')F(b+2)\le 3F(a')F(b')F(b+2)=F(a')^2+F(b')^2+4\le 2F(b')^2+4.$$
Therefore
$$3F(b+2)\le 2F(b')+\frac{4}{F(b')}\le 2F(b')+4< 2F(c')+4=2F(b+2)+4$$
giving $F(b+2)< 4,$ a contradiction as $b=c-4\ge 3$.

\end{proof}

\begin{lemma}
\label{lemma:sumab}
Let $m>0$. Let $(F(a),F(b),F(c))$ and $(F(a'),F(b'), F(c'))$ be two ordered minimal Markoff-Fibonacci $m$-triples with $a \le b \le c$, $a'\le b'\le c'$ and $c'=c-1$. If $c\ge 7$ then $a+b=c-1$.
\end{lemma}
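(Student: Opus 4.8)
The plan is to argue by contradiction. Since $(F(a),F(b),F(c))$ is a minimal $m$-triple and $c\ge 7$, Proposition \ref{prop:minimalBound} gives $c\ge a+b+1$, i.e.\ $a+b\le c-1$; so it suffices to rule out the possibility $a+b\le c-2$. Assuming $a+b\le c-2$, I will show that $m(a,b,c)$ is strictly larger than the largest value $m(a',b',c')$ can take when $c'=c-1$, contradicting $m(a,b,c)=m(a',b',c')$.

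For the lower bound I would write $m(a,b,c)=F(c)\bigl(F(c)-3F(a)F(b)\bigr)+F(a)^2+F(b)^2$. From $a+b\le c-2$ we have $c-1\ge a+b+1$, so Lemma \ref{lemma:F(n+m)<=3F(n)F(m)} gives $3F(a)F(b)<F(c-1)$, whence $F(c)-3F(a)F(b)>F(c)-F(c-1)=F(c-2)$. Using $F(a)^2+F(b)^2\ge 2$ (as $a,b\ge 2$) this yields $m(a,b,c)>F(c)F(c-2)+2$. Applying Vajda's identity of Remark \ref{rmk:Vadja} with $n=c-2$, $i=j=1$ gives the Cassini relation $F(c)F(c-2)=F(c-1)^2+(-1)^{c-1}\ge F(c-1)^2-1$, so that $m(a,b,c)>F(c-1)^2+1$.

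For the upper bound, since $(F(a'),F(b'),F(c'))$ is a minimal $m$-triple with $c'=c-1\ge 6\ge 5$, Lemma \ref{lemma:increasing} gives $m=m(a',b',c')\le m(2,2,c-1)=F(c-1)^2-3F(c-1)+2<F(c-1)^2$, the last inequality holding because $F(c-1)\ge F(6)=8$. Combining the two bounds gives $F(c-1)^2<m(a,b,c)=m=m(a',b',c')<F(c-1)^2$, a contradiction; hence $a+b=c-1$. I do not anticipate a real obstacle: the argument only uses that passing from top index $c$ to $c-1$ drops the dominant contribution to $m$ from about $F(c)F(c-2)\approx F(c-1)^2$ below $m(2,2,c-1)<F(c-1)^2$, and the slack in the hypotheses $a,b\ge 2$ and $c\ge 7$ is ample; the only point needing a little care is keeping track of the sign $(-1)^{c-1}$ in Cassini's identity so that the small additive constants line up in the right direction.
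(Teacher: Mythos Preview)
Your proof is correct and takes a somewhat more elementary route than the paper's. Both arguments proceed by contradiction, assume $a+b\le c-2$, and use Lemma~\ref{lemma:increasing} to bound $m(a',b',c')\le m(2,2,c-1)<F(c-1)^2$ from above. The difference lies in the lower bound for $m(a,b,c)$: the paper appeals to the analytic estimate of Lemma~\ref{lemma:KaramataBound} (computing the constant $L_{2,2,7}>\varphi^{-2}$) to obtain $m(a,b,c)>\tfrac{1}{5}\varphi^{2c-2}$, whereas you observe directly from Lemma~\ref{lemma:F(n+m)<=3F(n)F(m)} that $a+b\le c-2$ forces $3F(a)F(b)<F(c-1)$, hence $m(a,b,c)>F(c)F(c-2)+2\ge F(c-1)^2+1$ via Cassini's identity. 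Your approach avoids the Binet/Karamata machinery entirely and works with exact Fibonacci identities, which is cleaner for this particular lemma; the paper's version, while heavier here, reuses the same analytic template as the surrounding lemmas (e.g.\ Lemma~\ref{lemma:c'c-1}), so it keeps the exposition uniform.
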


\begin{proof}
By Proposition \ref{prop:minimalBound}, $a+b\le c-1$. Suppose that $a+b\le c-2$. Let us prove that in this case $m(a,b,c)>m(a',b',c')$. This will lead to a contradiction, proving that $a+b=c-1$. We will work analogously to Lemma \ref{lemma:c'c-1}. The same proof of equation \eqref{eq:minimal2} from that lemma shows that
$$m(a',b',c')< \frac{1}{5} \varphi^{2c'}= \frac{1}{5}\varphi^{2c-2}.$$
On the other hand, by Lemma \ref{lemma:increasing} and Lemma \ref{lemma:KaramataBound}, since $b\le c-2-a<c$, $a\ge 2$ and $c\ge 7,$ then
$$m(a,b,c)\ge m(a,c-2-a,c)>L_{2,2,7}\frac{1}{5}\varphi^{2c},$$
where $L_{A,t,C}$ is the constant defined at equation \eqref{eq:Latc}, which admits the following lower bound.
$$L_{2,2,7}=1-\frac{3}{\sqrt{5}}\varphi^{-2}+\left(1-\frac{3}{\sqrt{5}}\varphi^2\right)(\varphi^{-8}+\varphi^{-10})-\left(6+\frac{3}{\sqrt{5}}\varphi^2+\frac{9}{\sqrt{5}}\right)\varphi^{-14}>1.04\varphi^{-2}>\varphi^{-2}.$$
Consequently,
$$m(a,b,c)> L_{2,2,7}\frac{1}{5}\varphi^{2c}> \frac{1}{5}\varphi^{2c-2}>m(a',b',c')\, .$$
\end{proof}

\begin{lemma}
\label{lemma:suma'b'}
Let $m>0$. Let $(F(a), F(b), F(c))$ and $(F(a'),F(b'),F(c'))$ be two ordered minimal Markoff $m$-triples such that $c'=c-1$ and $(a,a')\ne (2,2)$. Then, if $c\ge 19$ the following hold.
\begin{itemize}
\item If $a\ne 2, 4$, then $a'+b'=c'-1$.
\item If $a=4$, then either $a'+b'=c'-1$ or $a'+b'=c'-2$.
\item If $a=2$, then $a'+b'+1\leq c'\leq a'+b'+5$.
\end{itemize}
\end{lemma}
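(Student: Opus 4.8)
The plan is to mirror the strategy of Lemma \ref{lemma:sumab} but applied to the primed triple, using the two-sided Karamata bounds from Lemma \ref{lemma:KaramataBound} together with the monotonicity Lemma \ref{lemma:increasing}. We know $m=m(a,b,c)=m(a',b',c')$ and $c'=c-1$. First I would extract a \emph{lower} bound for $m$ in terms of $\varphi^{2c}$ coming from the unprimed triple: since $(F(a),F(b),F(c))$ is minimal, $a+b\le c-1$, so by Lemma \ref{lemma:increasing}, $m(a,b,c)\ge m(a,c-1-a,c)$, and Lemma \ref{lemma:KaramataBound} with $t=1$, $A=a$ (taking $A=3$ when $a\ne 2,4$, $A=4$ when $a=4$, $A=2$ when $a=2$) and $C=19$ gives $m\ge L_{A,1,19}\tfrac{1}{5}\varphi^{2c}$. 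Simultaneously I would get an \emph{upper} bound for $m$ from the primed triple: writing $a'+b'=c'-1-s$ for the (unknown) slack $s\ge 0$, minimality forces $s\ge 0$, and Lemma \ref{lemma:increasing} gives $m(a',b',c')\le m(2,2,c')$ only for the crude bound; for the refined bound I would instead use that $(F(a'),F(b'),F(c'))$ with $c'=a'+b'+(1+s)$ falls under Lemma \ref{lemma:KaramataBound} with $t=1+s$, yielding $m\le U_{2,1+s,19}\tfrac{1}{5}\varphi^{2c'}=U_{2,1+s,19}\tfrac{1}{5}\varphi^{2c-2}$.

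The core of the argument is then purely numerical: combining the two bounds gives
\[
L_{A,1,19}\,\varphi^{2c}\le 5m\le U_{2,1+s,19}\,\varphi^{2c-2},
\]
hence $L_{A,1,19}\le \varphi^{-2}\,U_{2,1+s,19}$, an inequality in the single unknown $s$ (for each fixed case of $a$). Since $U_{2,t,19}$ is a strictly decreasing function of $t$ that tends to $1-\tfrac{3}{\sqrt5}\varphi^{-t}<1$ and in fact decreases rapidly, the inequality $L_{A,1,19}\le \varphi^{-2}U_{2,1+s,19}$ fails once $s$ exceeds an explicit small threshold; a direct computation of $L_{3,1,19}$, $L_{4,1,19}$, $L_{2,1,19}$ and of $U_{2,2,19},U_{2,3,19},\dots$ pins that threshold down. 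In the case $a\ne 2,4$ I expect $L_{3,1,19}\varphi^2 > U_{2,2,19}$, forcing $s=0$, i.e. $a'+b'=c'-1$. In the case $a=4$ the lower constant $L_{4,1,19}$ is slightly smaller, so one gets $s\in\{0,1\}$, i.e. $a'+b'\in\{c'-1,c'-2\}$. In the case $a=2$ the lower constant $L_{2,1,19}$ is the smallest, and the bound only rules out $s\ge 5$, giving $a'+b'+1\le c'\le a'+b'+5$.

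The step I expect to be the main obstacle is making the numerical estimates for $L$ and $U$ \emph{rigorous} rather than merely plausible: $L_{A,t,C}$ and $U_{A,t,C}$ are messy closed-form expressions in $\varphi$, and one must be careful to bound $\varphi$, $\sqrt5$, and the various powers $\varphi^{-2C}$, $\varphi^{2A-2C}$ from the correct side so that the claimed strict inequalities $L_{A,1,19}\varphi^2>U_{2,s+1,19}$ (resp. its failure for large $s$) genuinely hold; this is where the hypothesis $c\ge 19$ (equivalently $C=19$) is used, since the error terms $\varphi^{-2C}$ must be small enough for the leading-order comparison $1-\tfrac{3}{\sqrt5}\varphi^{-1}$ versus $\varphi^{-2}(1-\tfrac{3}{\sqrt5}\varphi^{-t})$ to dominate. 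A secondary subtlety is the bookkeeping in the case $a=2$: here one must also invoke that $m=2$ admits no two distinct minimal Fibonacci triples (as in Lemma \ref{lemma3.5}) to exclude the degenerate primed triple $(1,1,3)$, so that the Karamata bound with $A=2$ legitimately applies to $(F(a'),F(b'),F(c'))$.
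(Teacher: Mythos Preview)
Your plan contains a genuine sign error that breaks the argument. You claim that $U_{2,t,19}$ is strictly decreasing in $t$; in fact it is \emph{increasing}. From \eqref{eq:Uatc} the leading term is $1-\tfrac{3}{\sqrt5}\varphi^{-t}$, which rises from about $0.17$ at $t=1$ towards $1$ as $t$ grows, and the correction terms do not reverse this. Consequently the inequality $L_{A,1,19}\le \varphi^{-2}U_{2,1+s,19}$ that you derive becomes \emph{easier} to satisfy as $s$ increases, so it never fails and yields no contradiction for large $s$. Heuristically: when $t'=c'-a'-b'$ is large, $m(a',b',c')\approx F(c')^2\approx \tfrac{1}{5}\varphi^{2c-2}$, whereas $m(a,b,c)\approx 0.17\cdot\tfrac{1}{5}\varphi^{2c}\approx 0.45\cdot\tfrac{1}{5}\varphi^{2c-2}$; so for large $s$ one has $m(a',b',c')>m(a,b,c)$, the opposite of what your bounds would detect.

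The correct comparison is therefore the reverse one: an \emph{upper} bound on $m(a,b,c)$ against a \emph{lower} bound on $m(a',b',c')$. However, the universal constants $U_{A,1,C}$ and $L_{2,1+s,C}$ from Lemma~\ref{lemma:KaramataBound} are too coarse to force $s=0$ in the case $a\ne 2,4$: for instance $U_{3,1,19}\approx 0.24$ while $\varphi^{-2}L_{2,2,18}\approx 0.17$, so $s=1$ is not excluded. The paper avoids this by not using Lemma~\ref{lemma:KaramataBound} at all here; instead it works directly with the normalised difference $\frac{m(a,b,c)-m(a',b',c')}{F(c)^2}$, plugs in the exact value of $F(a)$ for each case $a=2,3,4,5,\ge 6$ separately (using $b=c-1-a$ from Lemma~\ref{lemma:sumab}), and bounds $3F(a')F(b')/F(c)$ via Binet with the assumed lower bound on $t'$, obtaining a strict negative sign. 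The case split on $a$ and the use of the exact small Fibonacci value $F(a)$ are essential to getting the sharp thresholds $t'=1$, $t'\le 2$, $t'\le 5$.
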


\begin{proof}
First, assume that $a'\geq 3$. For two such  triples, we have that $c'=c-1$ (Lemmas \ref{lemma:c'c-1}, \ref{lemma3.5} and \ref{lemma:c'c-1part2}), $c=a+b+1$ (Lemma \ref{lemma:sumab}) and $c'\geq a'+b'+1$ (Proposition \ref{prop:minimalBound}).
Since $b \geq a$, if we first assume that $a \geq 6$, then  $c \geq b + 7$ and $c\geq 2 a+1$, so $a\leq \lfloor{\frac{c-1}{2}}\rfloor$ and we obtain the following upper bound for the left
side of \eqref{eq:(2.2)}:

\begin{multline}\label{eq:(2.9)}\frac{m(a,b,c)}{F(c)^2}-\frac{m(a',b',c')}{F(c)^2} \leq 1+\frac{F(c-7)^2}{F(c)^2}+\frac{F(\lfloor\frac{c-1}{2}\rfloor)^2}{F(c)^2}-3\frac{F(a) F(b)}{F(c)}\\-\left(\frac{F(c-1)^2}{F(c)^2}-\frac{3F(a')F(b')}{F(c)}\frac{F(c-1)}{F(c)}\right)\,.\end{multline}
We derive the following bounds for $F(n)$ based on Binet's formula (\ref{eq:Binet}) 
$$F(n)\leq \frac{1}{\sqrt{5}} \left(\varphi^n+\frac{1}{\varphi^n}\right), \quad F(n)\leq \frac{1}{\sqrt{5}}\left(\varphi^n+1\right), \quad
F(n)\geq \frac{1}{\sqrt{5}} \left(\varphi^n-\frac{1}{\varphi^n}\right),\quad F(n)\geq \frac{1}{\sqrt{5}}\left(\varphi^n-1\right).$$
These bounds allow us to establish constraints for the crossed terms of \eqref{eq:(2.9)} assuming that $c'\geq a'+b'+2$ (and then $c'\geq b'+5$):
\begin{equation*}
3\frac{F(a) F(b)}{F(c)} \geq \frac{3}{\sqrt{5}}\frac{(\varphi^a-\frac{1}{\varphi^a})(\varphi^b-\frac{1}{\varphi^b})}{\varphi^c+1}=\frac{3}{\sqrt{5}}\frac{\varphi^{a+b}-\frac{\varphi^b}{\varphi^a}-\frac{\varphi^a}{\varphi^b}+\frac{1}{\varphi^{a+b}}}{\varphi^c+1}\geq \frac{3}{\sqrt{5}}\frac{\varphi^{c-1}-\frac{\varphi^{c-7}}{\varphi^6}-1+\frac{1}{\varphi^{c-1}}}{\varphi^c+1}\,,
\end{equation*}
\begin{multline*}\frac{3F(a') F(b')}{F(c)}\leq \frac{3}{\sqrt{5}}\frac{\left(\varphi^{a'}+\frac{1}{\varphi^{a'}}\right)\left(\varphi^{b'}+\frac{1}{\varphi^{b'}}\right)}{\varphi^c-1}=\frac{3}{\sqrt{5}}\frac{\varphi^{a'+b'}+\frac{\varphi^{b'}}{\varphi^{a'}}+\frac{\varphi^{a'}}{\varphi^{b'}}+\frac{1}{\varphi^{a'+b'}}}{\varphi^{c}-1} \\\leq \frac{3}{\sqrt{5}}\frac{\varphi^{c'-2}+\frac{\varphi^{c'-5}}{\varphi^3}+2}{\varphi^{c}-1}=\frac{3}{\sqrt{5}}\frac{\varphi^{c-3}+\frac{\varphi^{c-6}}{\varphi^3}+2}{\varphi^{c}-1}\,.
\end{multline*}
\noindent Consequently, we obtain the following upper bound for the right side of \eqref{eq:(2.9)}, where we also have proceeded as in the proof of Lemma \ref{lemma3.5} to find the value of $c$.
\begin{multline}\label{eq:(2.10)}
\frac{m(a,b,c)}{F(c)^2}-\frac{m(a',b',c')}{F(c)^2} \leq1+\frac{F(c-7)^2}{F(c)^2}+\frac{F(\lfloor\frac{c-1}{2}\rfloor)^2}{F(c)^2}-\frac{3}{\sqrt{5}}\frac{\varphi^{c-1}-\varphi^{c-13}-1+\frac{1}{\varphi^{c-1}}}{\varphi^c+1}\\-\left(\frac{F(c-1)^2}{F(c)^2}-\frac{3}{\sqrt{5}}\frac{\varphi^{c-3}+\varphi^{c-9}+2}{\varphi^c-1}\,\frac{F(c-1)}{F(c)}\right)\ \leq -0.0002.
\end{multline} for $c\geq 19$.
This contradicts \eqref{eq:(2.2)} for these values of $c$.

For $a=5$, it follows that $b=c-6$ and we obtain the upper bound
\begin{multline}\frac{m(a,b,c)}{F(c)^2}-\frac{m(a',b',c')}{F(c)^2} \leq 1+\frac{F(c-6)^2}{F(c)^2}+\frac{25}{F(c)^2}-\frac{15F(c-6)}{F(c)}\\-\left(\frac{F(c-1)^2}{F(c)^2}-\frac{3}{\sqrt{5}}\frac{\varphi^{c-3}+\varphi^{c-9}+2}{\varphi^c-1}\,\frac{F(c-1)}{F(c)}\right)\leq -0.004\end{multline} for $c\geq 12,$  which contradicts \eqref{eq:(2.2)}, for $c\geq 19.$

For $a=3$, it follows that $b=c-4$ and then
\begin{multline}\label{eq:a=3}\frac{m(a,b,c)}{F(c)^2}-\frac{m(a',b',c')}{F(c)^2}\leq 1+\frac{F(c-4)^2}{F(c)^2}+\frac{4}{F(c)^2}-\frac{6F(c-4)}{F(c)}\\-\left(\frac{F(c-1)^2}{F(c)^2}-\frac{3}{\sqrt{5}}\frac{\varphi^{c-3}+\varphi^{c-9}+2}{\varphi^c-1}\,\frac{F(c-1)}{F(c)}\right) \leq -0.007\end{multline} for $c\geq 9,$ consequently, we find again a contradiction with \eqref{eq:(2.2)} for $c\geq 19.$
This implies that $c'=a'+b'+1$ for $a\geq 5$ or $a=3$ if $a' \geq 3$ and the other conditions are fulfilled, as desired.

For $a=4,$ then $b=c-5,$ obtaining 
$$\frac{m(a,b,c)}{F(c)^2}=1+\frac{F(c-5)^2}{F(c)^2}+\frac{9}{F(c)^2}-\frac{9F(c-5)}{F(c)}.$$
On the other hand, assuming that $c'\geq a'+b'+3$ we obtain that $c=c'+1\geq a'+b'+4\geq b'+7$ , $c=c'+1\geq a'+b'+4\geq 2a'+4$, therefore
$$\frac{3F(a') F(b')}{F(c)}\leq\frac{3}{\sqrt{5}}\frac{\varphi^{a'+b'}+\frac{\varphi^{b'}}{\varphi^{a'}}+\frac{\varphi^{a'}}{\varphi^{b'}}+\frac{1}{\varphi^{a'+b'}}}{\varphi^{c}-1}\leq \frac{3}{\sqrt{5}}\frac{\varphi^{c-4}+\varphi^{c-10}+2}{\varphi^{c}-1}.
$$
The upper bound for \eqref{eq:(2.2)} is
\begin{multline}\label{eq:a=4}\frac{m(a,b,c)}{F(c)^2}-\frac{m(a',b',c')}{F(c)^2}\leq 1+\frac{F(c-5)^2}{F(c)^2}+\frac{9}{F(c)^2}-\frac{9F(c-5)}{F(c)}\\-\left(\frac{F(c-1)^2}{F(c)^2}-\frac{3}{\sqrt{5}}\frac{\varphi^{c-4}+\varphi^{c-10}+2}{\varphi^c-1}\,\frac{F(c-1)}{F(c)}\right)\leq-0.008\end{multline} for $c\geq9$, 
so we get a contradiction. As a result,
$a'+b'+1\leq c'\leq a'+b'+2$.  Since the parameters are integers,
either $c'=a'+b'+1$ or $c'=a'+b'+2,$ as desired in this case.

For $a=2$, then  $b=c-3$ and
$$\frac{m(a,b,c)}{F(c)^2}=1+\frac{F(c-3)^2}{F(c)^2}+\frac{1}{F(c)^2}-\frac{3F(c-3)}{F(c)}.$$

On the other hand, assuming that $c'\geq a'+b'+6$ we find that
$c=c'+1\geq a'+b'+7\geq b'+10$, in consequence
$$\frac{3F(a') F(b')}{F(c)}\leq\frac{3}{\sqrt{5}}\frac{\varphi^{a'+b'}+\frac{\varphi^{b'}}{\varphi^{a'}}+\frac{\varphi^{a'}}{\varphi^{b'}}+\frac{1}{\varphi^{a'+b'}}}{\varphi^{c}-1}\leq \frac{3}{\sqrt{5}}\frac{\varphi^{c-7}+\varphi^{c-13}+2}{\varphi^{c}-1}\,.
$$
This gives the following upper bound for \eqref{eq:(2.2)} 
\begin{multline}\frac{m(a,b,c)}{F(c)^2}-\frac{m(a',b',c')}{F(c)^2}\leq 1+\frac{F(c-3)^2}{F(c)^2}+\frac{1}{F(c)^2}-\frac{3F(c-3)}{F(c)}\\-\left(\frac{F(c-1)^2}{F(c)^2}-\frac{3}{\sqrt{5}}\frac{\varphi^{c-7}+\varphi^{c-13}+2}{\varphi^c-1}\,\frac{F(c-1)}{F(c)}\right)\leq-0.0009\end{multline} for $c\geq 13$, a contradiction with \eqref{eq:(2.2)}. This implies that
$a'+b'+1\leq c'\leq a'+b'+5$, as desired in this case.

Assume that $a'=2$, $a\geq 3$. Now, if $c'\geq a'+b'+2$, then $c'\geq b'+4$. This implies that $b'\leq c-5$, so 
\begin{multline*}
\frac{m(a,b,c)}{F(c)^2}-\frac{m(a',b',c')}{F(c)^2}\leq 1+\frac{F(c-5)^2}{F(c)^2}+\frac{9}{F(c)^2}-\frac{9F(c-5)}{F(c)}
-\left(\frac{F(c-1)^2}{F(c)^2}-\frac{3 F(c-5)}{F(c)}\frac{F(c-1)}{F(c)}\right)\\ \leq -0.015\end{multline*}
for $c\geq 11$, a contradiction with \eqref{eq:(2.2)}. Hence $c'=a'+b'+1$ also in this case and we are done.
\end{proof}

Let us show that the remaining case $(a,a')=(2,2)$ from the previous Lemma is impossible.

\begin{lemma}
\label{lemma:a2a'2}
Let $(F(a), F(b), F(c))$ and $(F(a'),F(b'),F(c'))$ be two ordered minimal Markoff-Fibonacci $m$-triples such that $c'=c-1$. If $c\ge 11$, then $(a,a')\ne (2,2)$.
\end{lemma}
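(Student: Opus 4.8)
The plan is to assume $(a,a')=(2,2)$ and derive a contradiction by collapsing the equality of the two $m$-values to a single Diophantine identity with a bounded right-hand side. Since $c\ge 11\ge 7$, $c'=c-1$, and both triples are minimal, Lemma~\ref{lemma:sumab} applied to the first triple gives $a+b=c-1$, hence $b=c-3$, so the two triples are $(1,F(c-3),F(c))$ and $(1,F(b'),F(c-1))$; by minimality of the second one (Proposition~\ref{prop:minimalBound}) together with $a'=2$ we get $2\le b'\le c'-3=c-4$. I would then expand $m(2,c-3,c)=m(2,b',c-1)$, i.e.
$$F(c-3)^{2}+F(c)^{2}-3F(c-3)F(c)=F(b')^{2}+F(c-1)^{2}-3F(b')F(c-1),$$
eliminating $F(c)$ and $F(c-1)$ through the recursion ($F(c)=3F(c-3)+2F(c-4)$, $F(c-1)=3F(c-4)+2F(c-5)$, $F(c-3)=F(c-4)+F(c-5)$) and simplifying with Cassini's identity $F(c-5)^{2}=F(c-6)F(c-4)+(-1)^{c}$ (a special case of Vajda's identity, Remark~\ref{rmk:Vadja}) together with the elementary identity $5F(c-4)-2F(c-6)=F(c-1)$. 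This reduces the equality to
\begin{equation}
\label{eq:a2a2plan}
F(b')^{2}-F(c-1)\bigl(3F(b')-F(c-6)\bigr)=-5(-1)^{c},
\end{equation}
so the left-hand side, which is an integer, must equal $\pm5$; it then remains to rule this out for every $b'$ with $2\le b'\le c-4$.

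I would do this by writing $b'=c-k$ with $4\le k\le c-2$ and splitting into three ranges. If $k\ge 9$, the recursion gives $3F(c-9)-F(c-6)=-2F(c-10)$, so $F(c-6)-3F(b')\ge 2F(c-10)\ge 2$ and the left side of \eqref{eq:a2a2plan} is $\ge 2F(c-1)\ge 2F(10)=110>5$. If $k\in\{4,5,6,7\}$, a short computation with the recursion shows that $3F(c-k)-F(c-6)$ equals $F(c-2)$, $2F(c-6)+3F(c-7)$, $2F(c-6)$, $F(c-7)+F(c-9)$ respectively, each of which is at least $F(c-k)$; hence the left side of \eqref{eq:a2a2plan} is $\le F(b')^{2}-F(c-1)F(b')=-F(b')\bigl(F(c-1)-F(b')\bigr)$, whose absolute value is $\ge F(c-7)\bigl(F(c-1)-F(c-7)\bigr)\ge F(4)\bigl(F(10)-F(4)\bigr)=156>5$ for $c\ge 11$ (using $2F(c-4)<F(c-1)$, so that $t\mapsto t\bigl(F(c-1)-t\bigr)$ is increasing over the relevant range). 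The remaining case $k=8$ is the decisive one: here $3F(c-8)-F(c-6)=F(c-10)$, so \eqref{eq:a2a2plan} becomes $F(c-8)^{2}-F(c-1)F(c-10)=-5(-1)^{c}$, and Vajda's identity with the three indices $c-10$, $2$, $7$ gives $F(c-1)F(c-10)=F(c-8)F(c-3)-13(-1)^{c}$, whence $F(c-8)\bigl(F(c-3)-F(c-8)\bigr)=18(-1)^{c}$. But for $c\ge 11$ one has $F(c-8)\ge F(3)=2$ and $F(c-3)-F(c-8)\ge F(c-4)\ge F(7)=13$, so the left side is $\ge 26>18$, a contradiction. This exhausts all admissible $b'$, and therefore $(a,a')\ne(2,2)$.

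The computation leading to \eqref{eq:a2a2plan} is routine bookkeeping; the genuine difficulty, and the step I expect to be the main obstacle, is the narrow window $b'\in\{c-8,\dots,c-4\}$ — in particular the subcase $b'=c-8$ — where the two competing $m$-values differ only by a bounded amount, so that the crude exponential comparisons of Lemmas~\ref{lemma:c'c-1} and~\ref{lemma:sumab} no longer separate them. The way around this is to extract the exact discrepancy $\pm5$ from \eqref{eq:a2a2plan} and then, via Vajda's identity, recast the surviving condition as the product identity $F(c-8)\bigl(F(c-3)-F(c-8)\bigr)=\pm18$, which is visibly impossible on size grounds.
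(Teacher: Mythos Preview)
Your argument is correct. Both you and the paper start from the same reduction (via Lemma~\ref{lemma:sumab}) to an equation of the form $F(b')^{2}-3F(b')F(c-1)=k$, but you then take a different route to exclude a Fibonacci solution. The paper treats $p(x)=x^{2}-3F(c-1)x$ as a real quadratic, keeps $k=-F(c-3)F(c-4)+3(-1)^{c-3}$, and shows with the numerical ratio bounds from Tables~\ref{table:1}--\ref{table:2} that $p(F(c-9))>k>p(F(c-8))$, so the relevant root is trapped strictly between two consecutive Fibonacci numbers. You instead shift the equation by $F(c-1)F(c-6)$ to force the right-hand side to the bounded value $\pm5$ and then run an integer case analysis on $b'=c-k$: the ranges $k\ge 9$ and $4\le k\le 7$ are handled by crude size estimates, and the delicate case $k=8$ is dispatched by a second application of Vajda's identity, turning it into $F(c-8)\bigl(F(c-3)-F(c-8)\bigr)=\pm18$, which fails on size grounds for $c\ge 11$. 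Your route is purely arithmetic and self-contained (no appeal to the appendix tables), at the price of a little more casework; the paper's route is shorter once the ratio tables are available, and localizes the obstruction in a single sandwich inequality rather than three separate cases.
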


\begin{proof}
Assume that $m(2,b,c)=m(2,b',c-1)$ for some $b$, $b'$ and $c,$ with $c\ge 11$. By Lemma \ref{lemma:sumab}, $b=c-3$. Thus, we obtain
\begin{multline*}1+F(c-3)^2+F(c)^2-3F(c-3)F(c)=m(2,c-3,c)=\\m(2,b',c-1)=1+F(b')^2+F(c-1)^2-3F(b')F(c-1)\, .\end{multline*}
We can simplify this equality to get
\begin{multline*}
F(b')^2-3F(b')F(c-1)=F(c-3)^2+F(c)^2-F(c-1)^2-3F(c-3)F(c)\\
=F(c-3)^2+F(c+1)F(c-2)-3F(c-3)F(c)=k\, .	
\end{multline*}
By Vajda's identity (Remark \ref{rmk:Vadja}) we have $F(c+1)F(c-2)=F(c+2)F(c-3)+3(-1)^{c-3}$, so
\begin{equation*}
k=F(c-3)\left(F(c-3)+F(c+2)-3F(c)\right)+3(-1)^{c-3}
=-F(c-3)F(c-4)+3(-1)^{c-3}\, .
\end{equation*}
The equation $F(b')^2-3F(b')F(c-1)=k$ is a quadratic equation in $F(b')$. Let $p(x)=x^2-3F(c-1)x$. Since $k<0$ for all $c\ge 7$, the parabola $p(x)=k$ has two positive roots. One of them is greater than its vertex, $\frac{3}{2} F(c-1)$, which is greater than $F(c-1)$, so it cannot be $F(b')$, since $b'\le c-1$. We will prove that the other root, belonging to the interval $(0,\frac{3}{2}F(c-1))$ is not a Fibonacci number by showing that there exists a Fibonacci number $F(b^*)<F(c-1)$ such that $p(F(b^*-1))>k>p(F(b^*))$. Since the function $p(x)$ is strictly decreasing in the interval $(0,F(c-1))$, this will prove that the root belongs to the open interval $(F(b^*-1),F(b^*))$ and, therefore, that it cannot be a Fibonacci number. Concretely, we will show that for all $c\ge 11$
$$p(F(c-9))>k>p(F(c-8))\, .$$
Dividing both sides of the equation by $F(c-1)^2$, this inequality is equivalent to 
\begin{equation}
\label{eq:a2a'2eq1}
    \frac{F(c-9)^2}{F(c-1)^2}-3\frac{F(c-9)}{F(c-1)} > -\frac{F(c-3)}{F(c-1)}\frac{F(c-4)}{F(c-1)}+\frac{3(-1)^{c-3}}{F(c-1)^2} > \frac{F(c-8)^2}{F(c-1)^2}-3\frac{F(c-8)}{F(c-1)}\, .
\end{equation}
By Remark \ref{rmk:FibonacciQuotientBounds}, we can bound above and below the left-hand side, right-hand side and middle part of the previous equation as follows. For each $c\ge 11,$ the following holds
$$\frac{F(c-9)^2}{F(c-1)^2}-3\frac{F(c-9)}{F(c-1)}\ge -0.0671, \qquad \frac{F(c-8)^2}{F(c-1)^2}-3\frac{F(c-8)}{F(c-1)}\le -0.0998.$$
\begin{multline*}
    -0.0891\ge
    -\frac{F(c-4)}{F(c-1)}\frac{F(c-3)}{F(c-1)}+\frac{3}{F(c-1)^2}\ge
    -\frac{F(c-4)}{F(c-1)}\frac{F(c-3)}{F(c-1)}+\frac{3(-1)^{c-3}}{F(c-1)^2}\\
    \ge -\frac{F(c-4)}{F(c-1)}\frac{F(c-3)}{F(c-1)}-\frac{3}{F(c-1)^2}\ge -0.0913\,.
\end{multline*}
As $-0.0671>-0.0891$ and $-0.0913>-0.0998$, the lemma follows.
\end{proof}

We can now combine all the previous results to show that no pair of minimal Markoff-Fibonacci $m$-triples can exist when $c'<c,$ if $c$ is big enough.

\begin{lemma}
\label{lemma:c=c'}
Let $(F(a), F(b), F(c))$ and $(F(a'),F(b'),F(c'))$ be two ordered minimal Markoff-Fibonacci $m$-triples. If $c\ge 20$ then $c=c'$.
\end{lemma}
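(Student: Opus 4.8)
The plan is to assume for contradiction that $c' < c$ and to combine the preceding lemmas to pin down the shape of the two triples so tightly that a size comparison of $m(a,b,c)$ and $m(a',b',c')$ produces a contradiction. Since $c \ge 20$, Lemmas~\ref{lemma:c'c-1} and \ref{lemma:c'c-1part2} together cover every value $a \ge 2$ and force $c' \in \{c, c-1\}$, so we may assume $c' = c-1$. Then Lemma~\ref{lemma:a2a'2} gives $(a,a') \ne (2,2)$, Lemma~\ref{lemma:sumab} gives $a+b = c-1$ (so the first triple has parameter $t = 1$ in the notation of Lemma~\ref{lemma:KaramataBound}), and Lemma~\ref{lemma:suma'b'} bounds $t' := c' - a' - b' \ge 1$: one has $t' = 1$ unless $a \in \{2,4\}$; if $a = 4$ then $t' \le 2$; if $a = 2$ then $t' \le 5$; and $t' \ge 2$ forces $a' \ge 3$, so $a' = 2$ is admissible exactly when $a \ne 2$.

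Next I would reduce the second triple to a single extremal shape. With $c'$ and $a'+b'$ fixed, a short computation with the Fibonacci recursion shows that over the admissible values of $a'$ the product $F(a')F(b')$ — and hence, since the term $-3F(a')F(b')F(c')$ dominates, the quantity $m(a',b',c')$ — is extremal for $a'$ small: $m(a',b',c')$ is maximized at $a' = 2$ when that value is admissible and at $a' = 4$ otherwise (one checks $F(S-2) < 2F(S-3)$, $3F(S-4) < 2F(S-3)$ and $F(4)F(S-4) < F(a')F(S-a')$ for $a' \ge 5$, where $S = a'+b'$, and that the $F(a')^2 + F(b')^2$ term does not overturn these comparisons), and $m(a',b',c')$ is increasing in $t'$. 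It therefore suffices to show that $m(a,b,c)$ strictly exceeds the $m$-value of a single explicit minimal triple: $m(2, c-4, c-1)$ when $a \ge 3$ and $a \ne 4$; $m(4, c-7, c-1)$ when $a = 4$; and $m(4, c-10, c-1)$ when $a = 2$.

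For these comparisons I would use $c = a+b+1$ and the lower bound $m(a,b,c) \ge L_{A,1,c}\frac{1}{5}\varphi^{2c}$ of Lemma~\ref{lemma:KaramataBound} with $A = \min(a,5)$. When $a \ge 3$ and $a \ne 4$, the elementary inequality $\varphi^2 L_{A,1,c} > U_{2,1,c-1}$ — which holds for all $c \ge 20$ with room to spare — already gives $m(a,b,c) > m(2,c-4,c-1)$. The two remaining cases, $a = 4$ (compared against $m(4,c-7,c-1)$) and $a = 2$ (compared against $m(4,c-10,c-1)$), are too tight for these crude bounds; there I would instead use the explicit forms $m(2,c-3,c) = 1 + F(c-3)^2 + 2F(c)F(c-4)$ (which follows from $F(c) = 3F(c-3)+2F(c-4)$) and $m(4,c-5,c) = 9 + F(c-5)^2 + F(c)^2 - 9F(c-5)F(c)$ together with Vajda's identity (Remark~\ref{rmk:Vadja}) to rewrite the required difference $m(a,b,c) - m(\cdot)$ as a positive constant times $\varphi^{2c}$ plus a bounded error term; this is strictly positive for every $c \ge 20$ once the $\bar{\varphi}^n$-corrections are estimated (the leading term grows and the base case $c = 20$ is checked directly), which produces the contradiction.

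I expect the main obstacle to be precisely these two tight comparisons. In $m(4,c-5,c) > m(4,c-7,c-1)$ and $m(2,c-3,c) > m(4,c-10,c-1)$ the two sides agree to leading order up to a factor very close to $1$, so the bounds of Lemma~\ref{lemma:KaramataBound} are of no help; one must both identify the correct extremal second triple (which, contrary to first intuition, has $a' = 4$ rather than $a' = 3$) and control the lower-order Fibonacci corrections with some care. It is exactly to keep the second of these inequalities valid that one needs the bound $t' \le 5$ for $a = 2$ from Lemma~\ref{lemma:suma'b'}, since one more unit of $t'$ would reverse it; everything else reduces to a finite collection of explicit estimates in $\varphi$.
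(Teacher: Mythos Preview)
Your outline is correct and follows the same architecture as the paper: reduce to $c'=c-1$ via Lemmas~\ref{lemma:c'c-1} and~\ref{lemma:c'c-1part2}, invoke Lemmas~\ref{lemma:sumab}, \ref{lemma:a2a'2} and~\ref{lemma:suma'b'} to pin down $t=1$ and the range of $t'$, and then show $m(a,b,c)>m(a',b',c')$ by size estimates. Your observation that $t'\ge 2$ forces $a'\ge 3$ (from the last paragraph of the proof of Lemma~\ref{lemma:suma'b'}) is exactly right and is what makes the extremal second triple have $a'=4$ rather than $a'=2$ in the delicate cases.

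The organisational difference is in the endgame. The paper disposes of $a\ge 3$, $t'=1$ by a single Karamata comparison $L_{3,1,9}>\varphi^{-2}U_{2,1,8}$ (so there is no need to exclude $a=4$ there), then treats the case $a=2$ by bounding $5m(a',b',c')/\varphi^{2c}$ \emph{uniformly} over all $t'\le 5$ and all $a'$ via Lemma~\ref{lemma:KaramataBound}, obtaining the numerical squeeze $0.347>0.346$; and for $a=4$, $t'=2$ it runs a direct case split over $a'\in\{2,3,4,5,6,\ge 7\}$ using the quotient estimates of Remark~\ref{rmk:FibonacciQuotientBounds}. You instead prove an extra extremality lemma --- that for fixed $c'$ and $a'+b'$ the maximum of $m(a',b',c')$ over $a'\ge 3$ occurs at $a'=4$ --- and thereby reduce each remaining case to a \emph{single} explicit inequality, $m(4,c-5,c)>m(4,c-7,c-1)$ and $m(2,c-3,c)>m(4,c-10,c-1)$. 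Both inequalities are true (the leading $\varphi^{2c}$-coefficients are roughly $0.005$ and $0.011$ respectively, and the base case $c=20$ checks), so your plan works; the trade-off is that you buy a shorter case list at the cost of the extremality lemma and two rather tight Binet computations, whereas the paper avoids the lemma but pays with a longer enumeration. Your closing remark that $t'\le 5$ is sharp for the second inequality is accurate and a nice sanity check.
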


\begin{proof}
As a consequence of Lemma \ref{lemma:c'c-1} and Lemma \ref{lemma:c'c-1part2}, we know that either $c'=c$ or $c'=c-1$. It is therefore enough to prove that the case $c'=c-1$ is impossible. Assume that $c'=c-1$. By Lemma \ref{lemma:sumab}, we know that $a+b=c-1$. Due to Lemma \ref{lemma:a2a'2}, we know that $(a,a')\ne (2,2)$, so we can apply Lemma \ref{lemma:suma'b'} and obtain that $a'+b'=c'-1$ if $a\ne 4$ and that $a'+b'=c'-1$ or $a'+b'=c'-2$ if $a=4$ or that $c-6=c'-5\le a'+b'\le c'-1=c-2$ if $a=2$.

Let us first analyse the case where $a\ge 3$ and $a'+b'=c'-1=c-2$. Suppose that
$$m(a,c-1-a,c)=m(a',c-2-a',c-1)\, .$$
Applying Lemma \ref{lemma:KaramataBound}, we have that, as $a\ge 3$ and $c\ge 9$, then
\begin{gather*}
m(a,c-1-a,c)>L_{3,1,9}\frac{1}{5}\varphi^{2c}\\
m(a',c-2-a',c-1)<U_{2,1,8}\frac{1}{5}\varphi^{2(c-1)}=\varphi^{-2}U_{2,1,8} \frac{1}{5}\varphi^{2c}
\end{gather*}
A direct computation shows that $L_{3,1,9}>0.14>0.139>\varphi^{-2}U_{2,1,8}$, so $m(a,c-1-a-c)>m(a',c-2-a',c-1)$ and, therefore, the case $a'+b'=c'-1$ is impossible for $a\ge 3$. By Lemma \ref{lemma:suma'b'}, the only two remaining cases to prove the result are the following: either $a=2$ and $c-6=c'-5\le a'+b'\le c'-1 = c-2$ or $a=4$ and $a'+b'=c'-2=c-3$.

Let us begin analysing the $a=2$ case. Here, we find
\begin{multline*}
    m(a,c-1-a,c)=m(2,c-3,c)=1+F(c-3)^2+F(c)^2-3F(c-3)F(c)\\>1+\frac{1}{5}\varphi^{2c}+\frac{1}{5}\varphi^{2c-6}-\frac{4}{5}
    -\frac{3}{5}(\varphi^{c-3}-1)(\varphi^{c}-1)>\frac{1}{5}\varphi^{2c}+\frac{1}{5}\varphi^{2c-6}-\frac{3}{5}\varphi^{2c-3}\, .
\end{multline*}
Thus, for $c\ge 11$
$$\frac{5m(a,c-1-a,c)}{\varphi^{2c}}>1+\varphi^{-6}-3\varphi^{-22}>0.347\,.$$
On the other hand, using Lemma \ref{lemma:KaramataBound} for each $t=1,\ldots,5$ it follows that
$$\frac{5m(a',c-1-t-a',c-1)}{\varphi^{2c}}<\varphi^{-2}-\frac{3}{\sqrt{5}}\varphi^{-2-t}+\left(1+\frac{3}{\sqrt{5}}\varphi^t\right) ( \varphi^{-2t-6}+\varphi^{6-2c})+9\varphi^{-2c}\,.$$
The maximum of the right hand side for $1\le t\le 5$ and $c\ge 11$ is attained at $t=5$ and $c=11$, and yields
$$\frac{5\,m(a',c-1-t-a',c-1)}{\varphi^{2c}}<\varphi^{-2}-\frac{3}{\sqrt{5}}\varphi^{-7}+\left(1+\frac{3}{\sqrt{5}}\varphi^7\right)2 \varphi^{-16}+9\varphi^{-22}<0.346\,.$$
Therefore,
$$\frac{5\,m(a,c-1-a,c)}{\varphi^{2c}}>0.347>0.346>\frac{5\,m(a',c-1-t-a',c-1)}{\varphi^{2c}}$$
so the case $a=2$ is impossible. 

Now, let us examine the case $a=4$ and $c'=a'+b'+2$. In this instance, the value of $\frac{m(a,b,c)}{F(c)^2}$ can be derived as a consequence of Lemma \ref{lemma:sumab}
\begin{equation}\label{cota 0.17}\frac{m(a,b,c)}{F(c)^2}=1+\frac{F(c-5)^2}{F(c)^2}+\frac{9}{F(c)^2}-\frac{9F(c-5)}{F(c)}.\end{equation} 

\noindent On the other hand, if \(a' \geq 7\), then \(c-1=c'=a'+b'+2 \geq b'+9\), implying \(b'\leq c-10\). Also, \(c-1=c'=a'+b'+2 \geq 2a'+2\), hence \(a'\leq \lfloor \frac {c-3}{2}\rfloor\). Consequently, the left-hand side of equation \eqref{eq:(2.2)} has the following lower bound: 
\begin{multline*}
    \frac{m(a,b,c)}{F(c)^2}-\frac{m(a',b',c')}{F(c)^2}
    \geq
  1+\frac{F(c-5)^2}{F(c)^2}+\frac{9}{F(c)^2}-\frac{9F(c-5)}{F(c)}\\-\left(\frac{F(c-1)^2}{F(c)^2}+\frac{F(\lfloor \frac {c-3}{2}\rfloor)^2}{F(c)^2}+\frac{F(c-10)^2}{F(c)^2}-3\frac{F(a') F(b')}{F(c)}\frac{F(c-1)}{F(c)}\right).
\end{multline*}
The last term of this expression is bounded from below by
\begin{multline*}
    3\frac{F(a') F(b')}{F(c)}\frac{F(c-1)}{F(c)}\geq \frac{3}{\sqrt{5}}\frac{(\varphi^{a'}-1) (\varphi^{b'}-1)}{\varphi^c+1}\frac{F(c-1)}{F(c)}\\
    =\frac{3}{\sqrt{5}}\frac{(\varphi^{a'+b'}-\varphi^{a'}-\varphi^{b'}+1)}{\varphi^c+1}\frac{F(c-1)}{F(c)}
    \geq \frac{3}{\sqrt{5}}\frac{(\varphi^{c-2}-\varphi^{\lfloor \frac {c-3}{2}\rfloor}-\varphi^{c-10}+1)}{\varphi^c+1}\frac{F(c-1)}{F(c)}\,.
\end{multline*}
This gives the following lower bound for $\frac{m(a,b,c)}{F(c)^2}-\frac{m(a',b',c')}{F(c)^2}$.
 \begin{multline*}\frac{m(a,b,c)}{F(c)^2}-\frac{m(a',b',c')}{F(c)^2}\geq
  1+\frac{F(c-5)^2}{F(c)^2}+\frac{9}{F(c)^2}-\frac{9F(c-5)}{F(c)}\\
  -\left(\frac{F(c-1)^2}{F(c)^2}+\frac{F(\lfloor \frac {c-3}{2}\rfloor)^2}{F(c)^2}+\frac{F(c-10)^2}{F(c)^2}-\frac{3}{\sqrt{5}}\frac{(\varphi^{c-3}-\varphi^{\lfloor \frac {c-3}{2}\rfloor}-\varphi^{c-10}+1)}{\varphi^c+1}\frac{F(c-1)}{F(c)}\right)\geq 0.01 \ \end{multline*} for $c \geq 20$. This contradicts \eqref{eq:(2.2)}.

For the case $a'=6$, then $b'=c'-8=c-9$ and it can be seen that the value of $\frac{m(a,b,c)}{F(c)^2}-\frac{m(a',b',c')}{F(c)^2}$ has the following lower bound
\begin{multline*}\frac{m(a,b,c)}{F(c)^2}-\frac{m(a',b',c')}{F(c)^2}=1+\frac{F(c-5)^2}{F(c)^2}+\frac{9}{F(c)^2}-\frac{9F(c-5)}{F(c)}\\-\left(\frac{F(c-1)^2}{F(c)^2}+\frac{64}{F(c)^2}+\frac{F(c-9)^2}{F(c)^2}-24\frac{F(c-9) F(c-1)}{F(c)^2}\right) \geq 0.004\,\end{multline*} for $c \geq 13$, contradicting again \eqref{eq:(2.2)}.

The cases $3\leq a' \leq 5$ are managed in the same way and we omit them.

Finally, if $a'=2$, then $b'=c'-4=c-5$ and the value of $\frac{m(a,b,c)}{F(c)^2}-\frac{m(a',b',c')}{F(c)^2}$ now has the following upper bound 
\begin{multline*}\frac{m(a,b,c)}{F(c)^2}-\frac{m(a',b',c')}{F(c)^2}=1+\frac{F(c-5)^2}{F(c)^2}+\frac{9}{F(c)^2}-\frac{9F(c-5)}{F(c)}\\-\left(\frac{F(c-1)^2}{F(c)^2}+\frac{1}{F(c)^2}+\frac{F(c-4)^2}{F(c)^2}-3\frac{F(c-4) F(c-1)}{F(c)^2}\right) \leq -0.005<0\, .\end{multline*} for $c \geq 9$, a contradiction. \end{proof}

To complete our result, we need to prove that there do not exist two minimal Markoff-Fibonacci triples with the same highest element and the same $m$. In other words, let us prove now that if
$$m(a,b,c)=m(a',b',c)$$
then $a=a'$ and $b=b'$.

Let us suppose that $m(a,b,c)=m(a',b',c),$ for some $(a,b)\ne (a',b')$. Assume without loss of generality that $a\le a'$. By Lemma \ref{lemma:increasing}, if $b\le b'$ and $(a,b)\ne (a',b')$ then $m(a,b,c)<m(a',b',c)$. Thus, we must have $a\le a'\le b'<b$. On the other hand, if $a=a'$ and $b'< b,$ then again by Lemma \ref{lemma:increasing} it follows that $m(a,b',c)> m(a,b,c)$. Consequently, we can infer without loss of generality that
$$a<a'\le b'<b\le c\,.$$

\begin{lemma}
\label{lemma:a+b=a'+b'} Let $(F(a),F(b),F(c))$ and $(F(a'),F(b'),F(c))$ be two ordered minimal Markoff-Fibonacci $m$-triples with $2\le a<a'\le b'<b\le c$, then $a+b=a'+b'$.
\end{lemma}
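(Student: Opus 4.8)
The plan is to work directly from the two defining equations. Writing $m(a,b,c)=m(a',b',c)$ and cancelling $F(c)^2$ gives the master identity
\begin{equation*}
3F(c)\bigl(F(a)F(b)-F(a')F(b')\bigr)=F(a)^2+F(b)^2-F(a')^2-F(b')^2.\tag{$\star$}
\end{equation*}
First I would show the right-hand side of $(\star)$ is strictly positive: since $b'<b$ we have $a'\le b'\le b-1$, so $F(a')^2+F(b')^2\le 2F(b-1)^2$, whereas $F(b)\ge\tfrac32F(b-1)$ for $b\ge4$ gives $F(b)^2\ge\tfrac94F(b-1)^2>2F(b-1)^2$; adding $F(a)^2\ge1$, the right-hand side of $(\star)$ is positive, so $F(a)F(b)>F(a')F(b')$ and both sides of $(\star)$ are positive.

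Next I would extract a smallness bound. As $b\ge4$, neither triple is $(2,2,4)$, so Proposition \ref{prop:minimalBound} gives $c\ge a+b+1$, hence $F(c)>3F(a)F(b)$ by Lemma \ref{lemma:F(n+m)<=3F(n)F(m)}. Since the right-hand side of $(\star)$ is also $<F(b)^2$, substituting into $(\star)$ yields
\[
1\le F(a)F(b)-F(a')F(b')<\frac{F(b)}{9F(a)}\le\frac{F(b)}{9}\,,
\]
which forces $F(b)>9$, so $b\ge7$ and $a+b\ge9$.

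Now put $d=(a+b)-(a'+b')$; the goal is $d=0$, which I would obtain by ruling out $d\ge1$ and $d\le-1$, using the Lucas form $5F(x)F(y)=L(x+y)-(-1)^{y}L(y-x)$ with $L(n)=\varphi^{n}+\bar{\varphi}^{n}$. If $d\ge1$, then $a'+b'\le a+b-1$, and, estimating the corrections by $b-a\le a+b-4$ (from $a\ge2$) and $b'-a'\le a'+b'-6\le a+b-7$ (from $a'\ge3$) and using $a+b\ge9$,
\[
5\bigl(F(a)F(b)-F(a')F(b')\bigr)\ge L(a+b-2)-L(a+b-4)-L(a+b-7)=L(a+b-4)+L(a+b-6)>L(b-2);
\]
but $\tfrac{5F(b)}{9}<L(b-2)$ because $9L(b-2)-5F(b)=13F(b-3)-F(b-2)>0$, contradicting the smallness bound. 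If $d\le-1$, then $a'+b'\ge a+b+1$, and the analogous estimates $5F(a)F(b)<(1+\varphi^{-4})\varphi^{a+b}+2$ and $5F(a')F(b')>(1-\varphi^{-6})\varphi^{a'+b'}-2$, combined with $\varphi^{a'+b'}\ge\varphi\,\varphi^{a+b}$ and the numerical inequality $\varphi(1-\varphi^{-6})>1+\varphi^{-4}$, give $F(a')F(b')>F(a)F(b)$, contradicting positivity. Hence $d=0$, i.e. $a+b=a'+b'$.

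The delicate part is the $d\ge1$ case: the argument works only because the smallness bound $F(a)F(b)-F(a')F(b')<\tfrac{F(b)}{9F(a)}$ is just barely strong enough to beat the lower bound $\tfrac15L(b-2)$ that $a'+b'\le a+b-1$ forces, so one must use the sharp minimality input $F(c)>3F(a)F(b)$ and keep the Lucas corrections $L(b-a),L(b'-a')$ genuinely subleading, which is where the parities $(-1)^{b},(-1)^{b'}$ and the inequalities $b\ge a+2$ and $a'\ge a+1\ge3$ all enter. No degenerate small cases arise, since the smallness bound already forces $a+b\ge9$.
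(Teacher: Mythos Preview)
Your argument is correct and follows the same overall strategy as the paper: derive the identity $(\star)$, show both sides are positive (via $F(b)^2>2F(b-1)^2$), and then rule out $a+b\neq a'+b'$ by Binet-type size estimates; the paper handles the case $a'+b'<a+b$ slightly more directly by bounding the ratio $F(a')F(b')/(F(a)F(b))<8/9$ in one step and feeding this back into $(\star)$, whereas you reach the same contradiction via your smallness/largeness comparison through Lucas numbers. Two cosmetic points that do not affect validity: the sign in the Lucas identity should read $(-1)^{x}$ rather than $(-1)^{y}$ (check $x=2,y=3$), and in the $d\le -1$ case the additive constants $\pm 2$ are absorbed because $[\varphi(1-\varphi^{-6})-(1+\varphi^{-4})]\varphi^{a+b}>4$ already for $a+b\ge 6$.
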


\begin{proof}
Rearranging the equation
$$F(a)^2+F(b)^2+F(c)^2-3F(a)F(b)F(c)=F(a')^2+F(b')^2+F(c)^2-3F(a')F(b')F(c)$$
yields
\begin{equation}
\label{eq:a+b=a'+b'eq1}
    F(a)^2+F(b)^2-F(a')^2-F(b')^2=3F(c)\left(F(a)F(b)-F(a')F(b')\right)\, .
\end{equation}
The left-hand side is always positive because, as $b\ge 4$ and $a'\le b'<b$, by Remark \ref{rmk:FibonacciQuotientBounds} it follows that
$$F(b)^2>2F(b-1)^2\ge F(b')^2+F(a')^2.$$
Let us see that this is impossible if $a'+b'>a+b$. Assume that $a'+b'=a+b+t$ with $t>0$. As a result
\begin{equation*}
    \frac{F(a')F(b')}{F(a)F(b)} = \frac{(\varphi^{a'}-\bar{\varphi}^{a'})(\varphi^{b'}-\bar{\varphi}^{b'})}{(\varphi^a-\bar{\varphi}^{a})(\varphi^b-\bar{\varphi}^b)}\ge
    \frac{(\varphi^{a'}-\varphi^{-a'})(\varphi^{b'}-\varphi^{-b'})}{(\varphi^a+\varphi^{-a})(\varphi^b+\varphi^{-b})} = \frac{\varphi^{a'+b'}-\varphi^{b'-a'}-\varphi^{a'-b'}+\varphi^{-a'-b'}}{\varphi^{a+b}+\varphi^{b-a}+\varphi^{a-b}+\varphi^{-a-b}}\,.
\end{equation*}
Let $s=a+b$, then $a'+b'=s+t$. Dividing the numerator and denominator by $\varphi^s$ yields
\begin{equation*}
\frac{\varphi^{a'+b'}-\varphi^{b'-a'}-\varphi^{a'-b'}+\varphi^{-a'-b'}}{\varphi^{a+b}+\varphi^{b-a}+\varphi^{a-b}+\varphi^{-a-b}} = \frac{\varphi^t-\varphi^{t-2a'}-\varphi^{t-2b'}+\varphi^{-2s-t}}{1+\varphi^{-2a}+\varphi^{-2b}+\varphi^{-2s}}=\varphi^t\frac{1-\varphi^{-2a'}-\varphi^{-2b'}+\varphi^{-2s-2t}}{1+\varphi^{-2a}+\varphi^{-2b}+\varphi^{-2s}}\, .
\end{equation*}
As $2\le a<a'\le b'<b$, then $a\ge 2$, $a'\ge 3$, $b'\ge 3$, $b\ge 4$ and $s=a+b\ge 6$. Thus
$$\varphi^t\frac{1-\varphi^{-2a'}-\varphi^{-2b'}+\varphi^{-2s-2t}}{1+\varphi^{-2a}+\varphi^{-2b}+\varphi^{-2s}}\ge \varphi \frac{1-2\varphi^{-6}}{1+\varphi^{-4}+\varphi^{-8}+\varphi^{-12}}\ge 1.22 >1\,.$$
Therefore, $F(a')F(b')>F(a)F(b)$, which contradicts the positivity of both sides of equation \eqref{eq:a+b=a'+b'eq1}.

Thus, we must have $a+b\ge a'+b'$. Assume that $a'+b'=a+b-t$ with $t>0$ and let $s=a+b$ as before. Analogously to the previous case, 
\begin{multline*}\frac{F(a')F(b')}{F(a)F(b)} \le \frac{(\varphi^{a'}+\varphi^{-a'})(\varphi^{b'}+\varphi^{-b'})}{(\varphi^a-\varphi^{-a})(\varphi^b-\varphi^{-b})}=\varphi^{-t} \frac{1+\varphi^{-2a'}+\varphi^{-2b'}+\varphi^{-2s-2t}}{1-\varphi^{-2a}-\varphi^{-2b}+\varphi^{-2s}}
    \le\\ \varphi^{-1}\frac{1+2\varphi^{-6}+\varphi^{-14}}{1-\varphi^{-4}-\varphi^{-8}}<0.83<\frac{8}{9}\,.\end{multline*}
    
\noindent As a consequence, it follows that
$$1-\frac{F(a')F(b')}{F(a)F(b)}>1-\frac{8}{9}=\frac{1}{9}\ge \frac{1}{9F(a)^2}\, .$$
Multiplying both sides by $3F(a)F(b)F(c),$ yields
$$3F(c)\left(F(a)F(b)-F(a')F(b')\right)>\frac{F(c)F(b)}{3F(a)}\, .$$
As we assumed that $(F(a),F(b),F(c))$ is minimal, then $F(c)\ge 3F(a)F(b)$ therefore
$$3F(c)\left(F(a)F(b)-F(a')F(b')\right)>\frac{F(c)F(b)}{3F(a)}\ge F(b)^2>F(b)^2-F(b')^2+F(a)^2-F(a')^2\, .$$
This contradicts equation \eqref{eq:a+b=a'+b'eq1}, so $a+b$ cannot be greater than $a'+b'$. Thus $a+b=a'+b'$.
\end{proof}

\subsection{Proof of the main theorem}

Finally, we combine all the previous results to establish the main theorem of the paper (Theorem \ref{thm:intro}), proving first an intermediary proposition.


\begin{proposition}
\label{prop:minimal}
For each $m>0,$ except $m=21$, there exists at most one minimal Markoff-Fibonacci $m$-triple. For $m=21,$ there exist exactly two minimal Fibonacci triples: $(F(3),F(3),F(7))$ and $(F(2),F(3),F(6))$.
\end{proposition}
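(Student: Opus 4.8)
My plan is to assemble the lemmas above according to the size of the larger top index $c$. Suppose $(F(a),F(b),F(c))$ and $(F(a'),F(b'),F(c'))$ are two distinct ordered minimal Markoff-Fibonacci $m$-triples with $c\ge c'$. If $c<20$, then $c'\le c<20$, so Lemma~\ref{lemma:computationalCheck} applies; among \emph{minimal} triples its only non-trivial possibility is $(a,b,c)=(3,3,7)$, $(a',b',c')=(2,3,6)$ with $m=21$, because the family $(2,b,b+2)$ listed there is non-minimal except for $(2,2,4)$ by Proposition~\ref{prop:minimalBound} and hence yields no pair. A direct computation shows that $(F(3),F(3),F(7))=(2,2,13)$ and $(F(2),F(3),F(6))=(1,2,8)$ are minimal with $m=21$. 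Thus for $m\ne 21$ no pair occurs with $c<20$, and for $m=21$ these two are the only minimal triples with $c<20$.

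It then remains to show, for \emph{every} $m>0$, that no pair of distinct minimal Markoff-Fibonacci $m$-triples can have $c\ge 20$; applied to $m=21$ this forces every minimal $21$-triple to satisfy $c<20$, so the two triples above are all of them. I would first invoke Lemma~\ref{lemma:c=c'} to get $c'=c$, then, exactly as in the paragraph preceding Lemma~\ref{lemma:a+b=a'+b'}, use the monotonicity Lemma~\ref{lemma:increasing} to reduce without loss of generality to $2\le a<a'\le b'<b\le c$. Lemma~\ref{lemma:a+b=a'+b'} then yields $a+b=a'+b'$; writing $i=a'-a=b-b'\ge 1$ and $d=b-a\ge 2i$, Vajda's identity (Remark~\ref{rmk:Vadja}) with $n=a$ and exponents $i,\,d-i$ gives $F(a)F(b)-F(a')F(b')=(-1)^{a+1}F(i)F(d-i)$, which is \emph{nonzero} since $i\ge 1$ and $d-i\ge i\ge 1$.

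The one genuinely new step, and the place where the only difficulty lies, is to contradict equation~\eqref{eq:a+b=a'+b'eq1}, which now reads
$$F(a)^2+F(b)^2-F(a')^2-F(b')^2 \;=\; 3(-1)^{a+1}F(c)\,F(i)\,F(d-i).$$
When $a$ is even the right-hand side is negative while the left-hand side is positive (this positivity is established inside the proof of Lemma~\ref{lemma:a+b=a'+b'}), a contradiction. When $a$ is odd the right-hand side equals $3F(c)F(i)F(d-i)$; minimality gives $F(c)\ge F(a+b+1)>3F(a)F(b)$ by Lemma~\ref{lemma:F(n+m)<=3F(n)F(m)}, and the same lemma gives $F(b)=F(a+d)\le 3F(a)F(d)\le 9F(a)F(i)F(d-i)$ with at least one inequality strict (equality in the first forces $a=d=2$, equality in the second forces $d=2i=4$, and these are incompatible). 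Hence the right-hand side exceeds $9F(a)F(b)F(i)F(d-i)>F(b)^2$, whereas the left-hand side is smaller than $F(b)^2$; contradiction. This rules out $c\ge 20$ for all $m$, and together with the computational range (Lemma~\ref{lemma:computationalCheck}) it proves the proposition, the only delicate point being the chain of strict Fibonacci inequalities in the odd case.
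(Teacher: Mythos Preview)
Your proof is correct and follows essentially the same route as the paper's: computational check for $c<20$, then Lemma~\ref{lemma:c=c'} to force $c=c'$, the monotonicity reduction to $a<a'\le b'<b$, Lemma~\ref{lemma:a+b=a'+b'} to get $a+b=a'+b'$, Vajda's identity to rewrite \eqref{eq:a+b=a'+b'eq1}, and finally the chain $F(b)\le 9F(a)F(b'-a)F(b-b')$ combined with minimality to contradict that equation. The only cosmetic differences are that you factor the chain as $F(a+d)\le 3F(a)F(d)\le 9F(a)F(i)F(d-i)$ whereas the paper goes through $F(b')$, and you extract the needed strict inequality from the incompatibility of the two equality cases in Lemma~\ref{lemma:F(n+m)<=3F(n)F(m)}, while the paper simply uses $F(a)^2-F(a')^2-F(b')^2<0$ to get $\text{LHS}<F(b)^2\le\text{RHS}$; both are valid (and share the same harmless technicality that Lemma~\ref{lemma:F(n+m)<=3F(n)F(m)} is stated for indices $\ge 2$, while $i$ or $d-i$ may equal $1$, in which case the inequality is immediate and strict).
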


\begin{proof}
Let $(F(a),F(b),F(c))$ and $(F(a'),F(b'),F(c'))$ be a pair of ordered minimal Markoff-Fibonacci $m$-triples with $2\le a\le b\le c$, $2\le a'\le b'\le c'$ contradicting the proposition. Assume without loss of generality that $c\ge c'$. From the computational verification stated in Lemma \ref{lemma:computationalCheck}, we know that any counterexample to this theorem must have $c\ge 20$. By Lemma \ref{lemma:c=c'} it follows that $c=c'$. Moreover, by Lemma \ref{lemma:a+b=a'+b'}, we must have $a+b=a'+b'$.  Taking $n=a$, $i=b'-a$ and $j=b-b'=a'-a$ in Vajda's identity (Remark \ref{rmk:Vadja}), we can transform equation \eqref{eq:a+b=a'+b'eq1} into
\begin{equation*}
F(a)^2+F(b)^2-F(a')^2-F(b')^2=3F(c)\left(F(a)F(b)-F(a')F(b')\right) = (-1)^{a+1} 3F(c)F(b'-a)F(b-b')\, ,
\end{equation*}

From the proof of Lemma \ref{lemma:a+b=a'+b'}, we know that the left-hand side of this equality is positive, therefore $a$ is odd, and then
\begin{equation}
    \label{eq:thmminimaleq1}
    F(a)^2+F(b)^2-F(a')^2-F(b')^2=3F(c) F(b'-a)F(b-b')\,.
\end{equation}
However, using Lemma \ref{lemma:F(n+m)<=3F(n)F(m)}, we know that
$$F(b)\le 3F(b')F(b-b') \le 9F(a)F(b'-a)F(b-b')\, .$$
Multiplying by $F(b)$ and using minimality, $3F(a)F(b)\le F(c)$, yields
$$F(b)^2\le 3F(c)F(b'-a)F(b-b')$$
and, therefore,
$$F(b)^2-F(b')^2+F(a)^2-F(a')^2<F(b)^2\le 3F(c)F(b'-a)F(b-b')$$
which contradicts equation \eqref{eq:thmminimaleq1}. Consequently, there is no possible counterexample to the theorem with $c\ge 20$ and the result follows.
\end{proof}

\textit{Proof of Theorem \ref{thm:intro}}: Suppose that $m=2$. By Proposition \ref{prop:non-minimal}, the non-minimal Markoff-Fibonacci $2$-triples are $(1,F(b),F(b+2))$ for even $b>2$. By \cite{SC}, there only exists a minimal $2$-triple, which is $(1,1,3)=(1,F(2),F(4))$. Thus all  $2$-triples are given by $(1,F(b),F(b+2)),$ for even $b$.

For $m=21$, \cite{SC} showed that there exist exactly two minimal $21$-triples, which are $(1,2,8)=(F(2),F(3),F(6))$ and $(2,2,13)=(F(3),F(3),F(7))$, so both of them are Markoff-Fibonacci triples. From Proposition \ref{prop:non-minimal} we know that all the non-minimal Markoff-Fibonacci triples have $m=2$, so there are no more Markoff-Fibonacci $m$-triples for $m=21$.

Let us assume now that $m>0,\, m\ne 2$ and $m\ne 21$. Again, from Proposition \ref{prop:non-minimal}, we know that $m$ cannot admit a non-minimal Markoff-Fibonacci triple. Thus, any Markoff-Fibonacci $m$-triple must be minimal and, by Proposition \ref{prop:minimal}, one such triple must exist at most.

Finally, by Proposition \ref{prop:infinite}, we know that there is an infinite number of values of $m$ for which the $m$-Markoff equation admits a Markoff-Fibonacci $m$-triple and only two values (2 and 21) admit more than one triple. The rest admit exactly one solution which, according to the previous argument, must be a minimal triple.


\appendix
\section{Computational verifications}

This appendix contains a brief description of the algorithms used for the computational veri-\\fications of Lemma \ref{lemma:computationalCheck}, Remark \ref{rmk:FibonacciQuotientBounds} and Lemma \ref{lemma:a2a'2}. The full code can be found at \url{https://github.com/CIAMOD/markoff_fibonacci_m_triples}.

\subsection{Computational verification of Lemma \ref{lemma:computationalCheck}}
\label{section:appendix1}

The Python script \emph{"check\_minimal\_triples.py"} was used to check Lemma \ref{lemma:computationalCheck}, in which we need to prove that $m = 21$ is the only $m$-value with more than one minimal Fibonacci $m$-triple. Given the indexes of the Fibonacci
elements $(a,b,c)$, this program checks all triples up to $c=500$. The verification runs in 41 seconds on an Intel(R) Core(TM) i7-1165G7 @ 2.8GHz, and, if run up to $c=20$ instead, it completes the required verifications for Lemma \ref{lemma:computationalCheck} in a few milliseconds.

First, we generate a dictionary with the Fibonacci sequence, where the key is the index of the sequence and the value is the corresponding number. Then, we generate all possible combinations of indexes to generate Fibonacci triples. Using Proposition \ref{prop:minimalBound}, we only need to analyse triples with the minimality condition $c\ge a + b + 1$. Finally, we store in a dictionary all minimal Fibonacci triples assigned to their respective
m-value. With this, we can check if $m = 21$ is truly the only $m>0$ that has more than one minimal Markoff-Fibonacci $m$-triple.

\subsection{Computation of minimum bounds for section \ref{section:minimal}}
\label{section:appendix2}
The Python notebook\\ \emph{``markoff\_fibonacci.jpynb''} was used to provide a table for the values of $k_{N,a}$ and $K_{N,a}$ from Lemma \ref{lemma:boundFibonacciQuotient} for small values of $N$ and $a$, in which we can find lower and upper bounds for the ratio $\frac{F(n)}{F(n+a)}$ given a certain $a$ and $N$ such as $n\ge N$. These explicit bounds are then used to bound certain expressions in the proofs of some lemmata from Section \ref{section:minimal}, especially in Lemma \ref{lemma:a2a'2}.

As we dealt with exact bounds, we implemented two functions that round up and down the numbers to the $p$-th significant figure. This ensures that the bounds are still true. $p$ has been selected as small as needed by proofs of the Lemmata from Section \ref{section:minimal}.

\begin{table}[H]
\begin{tabular}{|l|c|c|c|c|c|c|c|c|c|}
\hline
\diagbox{N}{a} & 1 & 2 & 3 & 4 & 5 & 6 & 7 & 8 & 9 \\
\hline
2 & 0.5000 & 0.3333 & 0.2000 & 0.1250 & 0.07692 & 0.04761 & 0.02941 & 0.01818 & 0.01123 \\
\hline
3 & 0.6000 & 0.3750 & 0.2307 & 0.1428 & 0.08823 & 0.05454 & 0.03370 & 0.02083 & 0.01287 \\
\hline
4 & 0.6000 & 0.3750 & 0.2307 & 0.1428 & 0.08823 & 0.05454 & 0.03370 & 0.02083 & 0.01287 \\
\hline
5 & 0.6153 & 0.3809 & 0.2352 & 0.1454 & 0.08988 & 0.05555 & 0.03433 & 0.02122 & 0.01311 \\
\hline
6 & 0.6153 & 0.3809 & 0.2352 & 0.1454 & 0.08988 & 0.05555 & 0.03433 & 0.02122 & 0.01311 \\
\hline
7 & 0.6176 & 0.3818 & 0.2359 & 0.1458 & 0.09012 & 0.05570 & 0.03442 & 0.02127 & 0.01314 \\
\hline
8 & 0.6176 & 0.3818 & 0.2359 & 0.1458 & 0.09012 & 0.05570 & 0.03442 & 0.02127 & 0.01314 \\
\hline
9 & 0.6179 & 0.3819 & 0.2360 & 0.1458 & 0.09016 & 0.05572 & 0.03443 & 0.02128 & 0.01315 \\
\hline
10 & 0.6179 & 0.3819 & 0.2360 & 0.1458 & 0.09016 & 0.05572 & 0.03443 & 0.02128 & 0.01315 \\
\hline
\end{tabular}
\caption{Table of lower bounds ($k_{N,a}$)}
\label{table:1}
\end{table}

\vspace{-8pt}

\begin{table}[H]
\begin{tabular}{|l|c|c|c|c|c|c|c|c|c|}
\hline
\diagbox{N}{a} & 1 & 2 & 3 & 4 & 5 & 6 & 7 & 8 & 9 \\
\hline
2 & 0.6667 & 0.4000 & 0.2500 & 0.1539 & 0.09524 & 0.05883 & 0.03637 & 0.02248 & 0.01389 \\
\hline
3 & 0.6667 & 0.4000 & 0.2500 & 0.1539 & 0.09524 & 0.05883 & 0.03637 & 0.02248 & 0.01389 \\
\hline
4 & 0.6250 & 0.3847 & 0.2381 & 0.1471 & 0.09091 & 0.05618 & 0.03473 & 0.02146 & 0.01327 \\
\hline
5 & 0.6250 & 0.3847 & 0.2381 & 0.1471 & 0.09091 & 0.05618 & 0.03473 & 0.02146 & 0.01327 \\
\hline
6 & 0.6191 & 0.3824 & 0.2364 & 0.1461 & 0.09028 & 0.05580 & 0.03449 & 0.02132 & 0.01318 \\
\hline
7 & 0.6191 & 0.3824 & 0.2364 & 0.1461 & 0.09028 & 0.05580 & 0.03449 & 0.02132 & 0.01318 \\
\hline
8 & 0.6182 & 0.3821 & 0.2362 & 0.1460 & 0.09019 & 0.05574 & 0.03445 & 0.02129 & 0.01316 \\
\hline
9 & 0.6182 & 0.3821 & 0.2362 & 0.1460 & 0.09019 & 0.05574 & 0.03445 & 0.02129 & 0.01316 \\
\hline
10 & 0.6181 & 0.3820 & 0.2361 & 0.1460 & 0.09018 & 0.05573 & 0.03445 & 0.02129 & 0.01316 \\
\hline
\end{tabular}
\caption{Table of upper bounds ($K_{N,a}$)}
\label{table:2}
\end{table}

\vspace{-7pt}


\begin{thebibliography}{4444}

\bibitem[AL]{AL} Altassan, A., Luca, F. (2021) {\it Markov type equations with solutions in Lucas sequences}, Mediterr. J. Math. 18(87).

\bibitem[B]{B} Baragar, A. (1994) {\it Integral Solutions of Markoff-Hurwitz Equations}, J. Number Theory 49, 27--44.

\bibitem[GS]{GS} Ghosh, A., Sarnak, P. (2022) {\it Integral points on Markoff type cubic surfaces}, Invent. math. 229, 689-749.  

\bibitem[GGL]{GGL} G\'omez, C., G\'omez, J., Luca, F. (2020) {\it Markov triples with k-generalized Fibonacci components}, Annales Mathematicae et Informaticae, 51.

\bibitem[H]{H} Hashim, H. (2023). {\it  Solutions of the Markoff equation in Tribonacci numbers,} Rad Hrvatske akademije znanosti i umjetnosti Matematičke znanosti. 17. 71-79. 

\bibitem[HT]{HT} Hashim, H., Tengely, S. (2020) {\it Solutions of a generalized markoff equation in Fibonacci numbers,} Mathematica Slovaca, 70, 5, 1069-1078.

\bibitem[HST]{HST} Hashim, H. R., Szalay, L., Tengely, Sz. (2022) {\it Markoff-Rosenberger triples and generalized Lucas sequences}, Period. Math. Hung., 85, 188--202.

\bibitem[K]{K} Karamata, J. (1932), {\it Sur une in\'egalit\'e relative aux fonctions convexes}, Publ. Math. Univ. Belgrade (in French), 1: 145--148.

\bibitem[LS]{LS} Luca, F., Srinivasan, A. (2018) {\it Markov equation with Fibonacci components}, The Fibonacci Quarterly, 56, no. 2, p. 126

\bibitem[M1]{M1}
Markoff, A. A. (1879) {\it Sur les formes quadratiques binaires ind\'efinies},
Mathe\-matische Annalen 15, 381-496.

\bibitem[M2]{M2}
Markoff, A. A. (1880) {\it Sur les formes quadratiques binaires ind\'efinies
(second m\'emoire)}, Mathematische Annalen 17, 379-399.

\bibitem[Mor]{Mor}
Mordell, L. J. (1953) {\it On the Integer Solutions of the Equation $x^2+y^2+z^2+2xyz = n$}, Journal of the London Mathematical Society, Volume s1-28, Issue 4, 500-510. 

\bibitem[SC]{SC} Srinivasan, A., Calvo, L. (2023) {\it Counting Minimal Triples for a Generalized Markoff Equation}, Experimental Mathematics, 1--12. https://doi.org/10.1080/10586458.2024.2338279.

\bibitem[RSP]{RSP} Rayaguru, S. G., Sahukar, M. K., Panda, G. K. (2020) {\it Markov equation with components of some binary recurrence sequences}, Notes Number Theory Discrete Math., 26, 3, 149--159.

\bibitem[R]{R} Rosenberger, G. (1979) {\it \"Uber die diophantische Gleichung ax\textsuperscript{2} + by\textsuperscript{2} + cz\textsuperscript{2} = dxyz}, J. Reine Angew. Math. 305, 122--125.

\bibitem[T]{T} Tengely, S. (2020) {\it Markoff-Rosenberger triples with Fibonacci components}, Glas. Mat. Ser.III, 55(75), no. 1, 29--36.

\bibitem[V]{V} Vajda, S. (2008) \textit{Fibonacci and Lucas Numbers, and the Golden Section: Theory and Applications}. Dover. ISBN 978-0486462769. p. 28. (Original publication 1989 at Ellis Horwood).

\end{thebibliography}
\end{document}